\newcommand{\PR}{{\bf Prob}}
\newcommand{\szego}{Szeg\"o\ }
\newcommand{\kahler}{K\"ahler\ }
\newcommand{\PP}{{\mathbb P}}
\newcommand{\R}{{\mathbb R}}
\newcommand{\C}{{\mathbb C}}
\newcommand{\Z}{{\mathbb Z}}
\newcommand{\CP}{\C\PP}
\newcommand{\dbar}{\bar\partial}
\newcommand{\ddbar}{\partial\dbar}
\renewcommand{\H}{{\mathbf H}}
\newcommand{\half}{{\frac{1}{2}}}
\renewcommand{\phi}{\varphi}
\newcommand{\acal}{\mathcal{A}}
\newcommand{\bcal}{\mathcal{B}}
\newcommand{\dcal}{\mathcal{D}}
\newcommand{\ecal}{\mathcal{E}}
\newcommand{\fcal}{\mathcal{F}}
\newcommand{\gcal}{\mathcal{G}}
\newcommand{\hcal}{\mathcal{H}}
\newcommand{\jcal}{\mathcal{J}}
\newcommand{\lcal}{\mathcal{L}}
\newcommand{\pcal}{\mathcal{P}}
\newcommand{\mcal}{\mathcal{M}}
\newcommand{\ncal}{\mathcal{N}}
\newcommand{\ocal}{\mathcal{O}}
\newcommand{\zcal}{\mathcal{Z}}
\newtheorem{theo}{{\sc Theorem}}
\newtheorem{cor}[theo]{{\sc Corollary}}
\newtheorem{lem}[theo]{{\sc Lemma}}
\newtheorem{prop}[theo]{{\sc Proposition}}
\newenvironment{rem}{\medskip\noindent{\it Remark:\/} }{\medskip}
\newtheorem{maintheo}{{\sc Theorem}}
\newtheorem{mainprop}{{\sc Proposition}}
\newtheorem{maindefin}{{\sc Definition}}
\title[Large deviations of empirical  measures of zeros  on Riemann
surfaces
 ] {Large deviations of empirical  measures of zeros on Riemann surfaces}
\author{Steve Zelditch}
\address{Department of Mathematics, Northwestern University,
Evanston IL,  60208-2730, USA}
\thanks{Research partially supported by NSF grant
  DMS-0904252}
\begin{document}

\begin{abstract} We determine an LDP (large deviations principle) for the empirical measure $$ \tilde{Z}_s: = \frac{1}{N} \sum_{\zeta: s(\zeta) = 0}
 \delta_{\zeta}, \;\;\; (N: = \# \{\zeta: s(\zeta) = 0)\})$$
 of zeros of  random holomorphic sections $s$ of random  line
 bundles $L \to X$  over a Riemann surface $X$ of genus $g \geq 1$. In a previous
 article \cite{ZZ},  O. Zeitouni and the author proved such an LDP
 in the $g = 0$  case  of  $\CP^1$ using an explicit formula for the JPC (joint probability
 current) of zeros of Gaussian random polynomials. The main purpose of this article
 is to define Gaussian type measures on the ``vortex moduli  space" of all holomorphic sections of all
 line bundles of degree $N$ and to calculate   its JPC  as a volume form on the configuration
 space $X^{(N)}$ of $N$ points of $X$.   The calculation involves
 the higher genus analogues of Vandermonde determinants, the prime
 form and bosonization.

 \end{abstract}

\bigskip

\bigskip

\maketitle

\tableofcontents

In a recent article,  O. Zeitouni and the author \cite{ZZ} proved
 an LDP (large deviations
principle) for the empirical measure \begin{equation}\label{ZN}
\tilde{Z}_s: = d\mu_{\zeta}: =  \frac{1}{N} \sum_{\zeta: s(\zeta)
= 0}
 \delta_{\zeta}, \;\;\; N: = \# \{\zeta: s(\zeta) = 0\} \end{equation}
  of zeros of Gaussian
random polynomials $s$ of degree $N$ in one complex variable
(where $\delta_{\zeta}$ is the Dirac point measure at $\zeta $.)
The purpose of this continuation is to generalize the LDP to
holomorphic sections of line bundles  $L \to X$ of degree $N$ over
a compact Riemann surface $X$ of genus $g \geq 1$. Roughly
speaking, the LDP determines the
 asymptotic probability that a
configuration $\{P_1, \dots, P_N\}$ of $N$ points  is the zero set
of a random holomorphic section $s$ of a line bundle  $L$ of
degree  $N$ as $N \to \infty$.
 The essentially new aspect of
higher genus Riemann surfaces is that $L$ is not unique but rather
varies over the $g$-dimensional Picard variety $\mbox{Pic}^N$ of
holomorphic line bundles of degree $N$. As recalled in \S
\ref{BACKGROUND},
 the space $\mbox{Pic}^N$ of line bundles of degree $N$ is a compact complex torus of dimension $g$.
 The space of all holomorphic sections of all line bundles of
 degree $N$ is therefore
  the total space
 \begin{equation} \label{ECALDEF} \ecal^N : = \bigcup_{\xi \in Pic^N}  H^0(X, \xi) \end{equation}
of the complex holomorphic vector bundle (the Picard bundle),
\begin{equation} \label{ECALVB} \pi_N: \ecal^N \to \mbox{Pic}^N
\end{equation}
 whose fiber $\ecal^N_{\xi}$ over $\xi$ is the space $H^0(X, \xi)$ of holomorphic sections of $\xi$.
Since $\ecal_N$ is a vector bundle rather than a vector space, it
does not carry a Gaussian measure as in the genus zero case of
\cite{ZZ}.  But  it does carry  closely related types of
Gaussian-like measures introduced in   Definitions
\ref{FSHAARDEF}- \ref{LARGEVS}. Since we are interested in zeros,
it is natural to identify sections which differ by a constant
multiple, i.e. to projectivize $H^0(X, \xi)$ to $\PP H^0(X, \xi)$,
and to push forward the Gaussian type measures to Fubini-Study
type probability measures on the $\CP^{N-g}$- bundle
\begin{equation} \label{PECALDEF}
\PP \ecal^N : = \bigcup_{\xi \in Pic^N}  \PP H^0(X, \xi) \to
\mbox{Pic}^N.
\end{equation}
We endow the total space with Fubini-Study volume forms along the
fibers and Haar measure along the base.  The resulting probability
measure is called the {\it Fubini-Study-fiber ensemble}
(Definition \ref{FSHAARDEF}). In addition we will define a
Fubini-Study fiber ensemble over the configuration space $X^{(g)}$
of $g$ points (Definition \ref{FSTILDEDEF}) and   a more linear
{\it projective linear ensemble} by embedding $\PP \ecal^N$ in a
higher dimensional projective space (Definition \ref{LARGEVS}).

The projectivized Picard bundle  $\PP \ecal_N$ is analytically
equivalent to
 the configuration space
 \begin{equation} \label{CONFIG} X^{(N)} = Sym^{N} X :=  \underbrace{X\times\cdots\times
X}_N /S_{N} \end{equation}
 of $N$ points of  $X$ under the
 `zero set' or
divisor map
  \begin{equation} \label{ZEROSMAPg} \dcal: \PP \ecal_N \to X^{(N)}, \;\;
  \;\;\;\dcal(\xi, s)= \zeta_1 + \cdots + \zeta_N, \;\; Z_s = \{\zeta_j\}.\end{equation}
   Here, $S_N$ is the symmetric
  group on $N$ letters.
This analytic equivalence explains why it is natural to view the
line bundle as well as the section as a random variable. Any
configuration $\{\zeta_1, \dots, \zeta_N\}$ is the zero set of
some section of some line bundle of degree $N$, but the  possible
zero sets of sections $s \in H^0(X, L)$ of a fixed $L \in
\mbox{Pic}^N$ lie on a codimension $g$ submanifold of to
$X^{(N)}$. (As will be recalled in \S \ref{BACKGROUND},  the
submanifold is  a fiber of the Abel-Jacobi map $ A_N: X^{(N)} \to
\mbox{Jac}(X).$) $ \PP \ecal_N $ is also the  moduli space of
abelian vortices of Yang-Mill-Higgs fields of vortex number $N$
(\cite{Sam,MN}), and is therefore also called the vortex moduli
space.

A Gaussian type probability measure on $ \ecal_N$ weights a
section in terms of its $L^2$ norm with respect to an inner
product, or equivalently in terms of its coefficients relative to
an orthonormal basis. Under $\dcal$, it induces a probability
measure $\vec K^N$ on $X^{(N)}$, which is called the {\it joint
probability distribution} JPD of zeros of the random sections $s
\in \PP \ecal_N$. We also refer to it as the JPC (joint
probability current) since it is naturally a (possibly singular)
volume form on $X^{(N)}$.  Its integral over an open set $U
\subset X^{(N)}$ is the probability that the zero set of the
random section
 lies in $U$. The first objective of this
article is to calculate it explicitly for the Gaussian-type
probability measures on $\ecal_N$ mentioned above in terms of
products of Green's functions and prime forms.  This calculation
is much more involved than in the genus zero case, and is based on
  bosonization formulae on higher genus Riemann surfaces in
\cite{ABMNV,VV,F,DP,W}

 We then use  the explicit formula for $\vec K^N$ to obtain a rate function for the
 large deviations principle, which (roughly speaking) gives
 the asymptotic  probability as $N \to \infty$  that a  configuration of $N$ points is the zero
 sets
 of  a random section.  Since the spaces $X^{(N)}$ change with $N$, we
encode a configuration by its empirical measure,
\begin{equation} \{\zeta_1, \dots, \zeta_N\} \to \mu_{\zeta}: =
\frac{1}{N} \sum_{j = 1}^N \delta_{\zeta_j} \in \mcal(X).
\end{equation} We denote the map from configurations to empirical
measures by
\begin{equation} \label{MUDDEF} \mu: X^{(N)} \to \mcal(X),
\end{equation}
where $\mcal(K)$ denotes the convex set of probability
 measures on a  set $K$.
Under  the  map $\mu$ the JPD (or JPC) pushes forward to
 a probability  measure
\begin{equation} \label{LDPNa} \PR_N =  \mu_* \vec K^N \end{equation}  on
 $\mcal(X)$.

  Our main result is the analogue in higher genus of
 the LDP in genus zero. To state the result, we need some further
 notation. As in \cite{ZZ}, the input for our probability measures
 on sections is a pair $(\omega, \nu)$ where $\omega$ is a real
 $(1,1)$ form and where $\nu$ is a probability measure on $X$
 satisfying the two rather small technical hypotheses (\ref{BM}) and
 (\ref{REGULAR}) carried over from  \cite{ZZ}.
We denote by $G_{\omega}$ the Green's function of $X$ with respect
to $\omega$, and by
\begin{equation}\label{Green's potential} U_{\omega}^{\mu} (z) =
\int_X G_{\omega}(z,w) d\mu(w) \end{equation} the Green's
potential of a probability measure $\mu \in \mcal(X)$ (see \S
\ref{GRF} for background).  We define the
 Green's energy of $\mu \in \mcal(X)$  by
\begin{equation}\label{GEN} \ecal_{\omega}(\mu) = \int_{X \times X} G_{\omega}(z,w)
d\mu(z) d\mu(w). \end{equation}

As in \cite{ZZ}, we have:

\begin{maindefin}
{\it The LD  rate functional is defined by,
\begin{equation} \label{IGREEN}  I^{\omega, K} (\mu) =
- \frac{1}{2} \ecal_{\omega}(\mu) + \sup_K U^{\mu}_{\omega},\quad
\mu\in \mcal(X)
\end{equation}}

\end{maindefin}

  We also let
\begin{equation}
    \label{eq-ofer1}
    E_0(\omega)=\inf_{\mu\in \mcal(X)} I^{\omega,K}(\mu), \quad
    \tilde I^{\omega,K}=I^{\omega,K}-E_0(\omega)\,.
\end{equation}
It is proved in \cite{ZZ} that the infimum $\inf_{\mu\in \mcal(X)}
I^{\omega,K}(\mu)$ is achieved at the unique Green's equilibrium
measure $\nu_{\omega, K}$ with respect to $(\omega, K)$, and
$E_0(\omega)= \frac12 \log \mbox{Cap}_{\omega} (K)$, where
$\mbox{Cap}_{\omega}(K)$ is the Green's capacity.  By the Green's
equilibrium measure we mean the minimizer of $- \ecal_{\omega}$ on
$\mcal(K)$.

 \begin{maintheo}\label{LD} Assume
that $d\nu \in  \mcal(X)$ satisfies the Bernstein-Markov property
(\ref{BM}) and that its support is nowhere thin
 (\ref{REGULAR}). Then, for the Fubini-Study-fiber ensembles (see  Definitions
\ref{FSHAARDEF}- \ref{FSTILDEDEF}), $\tilde I^{\omega,K}$ of
\eqref{eq-ofer1} is a convex rate function and the sequence of
probability measures $\{{\bf Prob}_N\}$ on $\mcal(X)$ defined by
(\ref{LDPNa}) satisfies a large deviations principle with speed
$N^2$ and rate function $\tilde I^{\omega, K} $, whose unique
minimizer $\nu_{h, K} \in \mcal(X)$ is the Green's equilibrium
measure of $K$ with respect to $\omega$.
\end{maintheo}

Roughly speaking an LDP with speed $N^2$ and rate function $I$
states that, for any Borel subset $E \subset \mcal(X)$,
$$\frac{1}{N^2} \log \PR_N \{\sigma \in \mcal: \sigma \in E\}
\to - \inf_{\sigma \in E} I(\sigma). $$ The rate function is the
same as in the genus zero case in \cite{ZZ}. The  analytical
problems involved in proving the LDP from the formula for the JPC
are similar to those in the genus zero case of \cite{ZZ}. The
principal new feature in this work is the derivation of an
explicit formula for the JPC for our ensembles.

Roughly speaking, the existence of the LDP is due to the explicit
relation between two structures on $X^{(N)}$:

\begin{itemize}

\item The fiber bundle structure coming from the Abel-Jacobi
fibrations $X^{(N)} \to \mbox{Jac(X)}$
(\ref{ECALDEF})-(\ref{PECALDEF}). This gives rise to Fubini-Study
metrics along the fibers, which are the source of the probability
measures we define, based on the idea that the term `random
section' should refer to random coefficients relative to a fixed
basis.

\item The product structure of $X^{(N)}$: Although it is the
quotient of the Cartesian product $X^N$ by the symmetric group
$S_N$, the order $N!$ of $S_N$ is negligible when taking the
$\frac{1}{N^2} \log $ limit, so we may think of $X^{(N)}$ as
essentially a product.   As such, it carries the exterior product
measures $\pi_1^*\omega \boxtimes \cdots \boxtimes \pi_N^*\omega$
where $\omega$ is any \kahler form on $X$ and $\pi_j: X^N \to X$
is the projection onto the $j$th factor.  In practice, we use the
local coordinate volume form $\prod_{j = 1}^N d \zeta_j \wedge d
\bar{\zeta}_j$ where $\zeta_j$ is the local uniformizing
coordinate on $X$. The main step in the proof of Theorem \ref{LD}
is to express the probability measures on $\PP \ecal_N$ defined by
the fiber bundle structure  in terms of product measures.
Estimates of the product measure in Lemma \ref{ZEROa}  are then
crucial to the proof of Theorem \ref{LD}.

\end{itemize}

The relation between the fiber bundle structure and the product
structure of $X^{(N)}$  permeates the proof of Theorem \ref{LD},
sometimes in an implicit way. For instance, our use of
bosonization in calculating the JPC is essentially to relate these
structures.

 \subsection{\label{PLSFSH} The Projective Linear Ensemble and the Fubini-Study-fiber ensemble}

We now define the basic probability measures on $\PP \ecal_N$ of
this article. As discussed in \cite{ZZ,SZ} (and elsewhere),
Gaussian (or Fubini-Study) measures on a vector space $V$ (or
projective space $\PP V$) correspond to a choice of Hermitian
inner product on $V$. When $V = H^0(X, \xi)$, inner products may
be defined by choosing a Hermitian metric $h$ on $\xi$ and a
probability measure $\nu$ on $X$.  The data $(h, \nu)$ induces the
inner product
\begin{equation} \label{DEFGN} ||s||^2_{G(h, \nu)} := \int_X
|s(z)|^2_{h} d\nu(z).
\end{equation}
The  Hermitian inner product $G(h, \nu)$ in turn  induces a
complex Gaussian measure $\gamma(h, \nu)$ on  $H^0(X, \xi)$: If
$\{S_j\}$ is an orthonormal basis for $G(h, \nu)$, then the
Gaussian measure is given in coordinates with respect to this
basis by,
\begin{equation}\label{gaussian}d\gamma(s):=\frac{1}{\pi^m}e^
{-|c|^2}dc\,,\quad s=\sum_{j=1}^{d}c_jS_j\,,\quad
c=(c_1,\dots,c_{d})\in\C^{d}\,.\end{equation} Here, $d = \dim_{\C}
H^0(X, \xi)$. As in \cite{ZZ} we assume throughout that $\nu$ is a
Bernstein-Markov measure whose support is non-thin at all of its
points (see \S \ref{HIP} for background).

Since we are studying zeros, it is   natural to push a Gaussian
measure $\gamma_{G(h, \nu)}$  on $H^0(X, \xi)$  under the natural
projection $H^0(X, \xi)^* \to \PP H^0(X, \xi)$ to obtain a
Fubini-Study measure $dFS_{G(h, \nu)}$  induced by $(h, \nu)$. In
fact,  our important maps factor through the projective space, so
it simplifies things to  use  Fubini-Study volume forms from the
start.  We refer to \cite{ZZ}, \S 3.2, for further discussion of
this step.

The fact that $\ecal_N$ is a vector bundle rather than a vector
space complicates this picture in two ways: first, we need to
define a   family $G_N(\xi)$ of Hermitian inner products on the
spaces $H^0(X, \xi)$ as  $\xi$ varies over $\mbox{Pic}^N$. If we
are given a family $G_N(\xi)$ of Hermitian metrics on $H^0(X,
\xi), \xi \in \mbox{Pic}^N$, we denote by $\gamma_{G_N(\xi)}$ the
associated family of Gaussian measures on $H^0(X, \xi)$ and by
$dV^{FS}_{G_N(\xi)}$ the associated family of Fubini-Study
measures on $\PP H^0(X, \xi)$ (the pushforwards of the
$\gamma_{G_N(\xi)}$). Second, Gaussian measures along the fibers
of $\ecal_N$ do not define a probability measure on $\ecal_N$; we
also need a probability measure on the `base', $\mbox{Pic}^N$. For
expository simplicity, we assume that the base measure is
normalized  Haar measure $d \theta$  on $\mbox{Pic}^N \simeq
\mbox{Jac}(X)$.

\begin{maindefin}  \label{FSHAARDEF}   Given a family of Hermitian inner products
$G_N(\xi)$ for $\xi \in \mbox{Pic}^N$, and a volume form $d
\sigma$ on $\mbox{Pic}^N$,
  the associated  {\it Fubini-Study-fiber} measure
 $d\tau_N^{FSH}$ on $\PP \ecal^N $ is the fiber-bundle product measure,
  \begin{equation} \label{gammadef} \int_{\PP \ecal^N} F(s) d\tau_N^{FSH} (s)
  := \int_{Pic^N}\; \{\int_{\PP H^0(X, \xi)} F(s) dV^{FS}_{G_N(\xi)} \} \;d\sigma(\xi), \end{equation}
where $dV^{FS}_{G_N(\xi)}$ are the fiber Fubini-Study volume
  forms defined by the Hermitian inner products on $H^0(X, \xi)$
  induced by $G_N(\xi)$. We also denote by $\hat{\tau}_N$
  the analogous construction for Gaussian-fiber  measures. In the
  special case where $d \sigma$ is normalized Haar measure, we
  call $d\tau_{FSH}$ Fubini-Study-Haar measure.
 \end{maindefin}

In genus zero, where the only line bundle of degree $N$ is
$\ocal(N)$,  the inner products $G_N(\omega, \nu)$ on $H^0(\CP^1,
\ocal(N))$ were those induced from tensor powers $h^N$ of a fixed
Hermitian metric $h$ with curvature form $\omega$  on $\ocal(1)$
and from $\nu$. In higher genus, we would like to induce a family
of   inner products $G_N(\omega, \nu)$ on all $H^0(X, \xi)$ from
the data $(\omega, \nu)$. It is a slight but useful change in
viewpoint to regard the basic data as $(\omega, \nu)$ rather than
$(h, \nu)$
 since $h$ is determined by $\omega$ only  up to a constant.

 The data $(\omega, \nu)$ can be used to determine a family
 $G_N(\omega, \nu)$ of Hermitian inner products on $H^0(X, \xi)$
 in at least two natural ways. The first is to choose a family of
 Hermitian metrics $h_N(\xi)$ on the family $\xi$ such that the
 curvature $(1,1)$ forms of the $h_N(\xi)$ equal $\omega$. We will
 refer to such a family as $\omega$-admissible.
 Unfortunately, such a family is only determined up to a function
 on $\mbox{Pic}^N$. The function may be fixed up to an overall
 constant using the so-called Faltings' metric on the determinant
 line bundle $\bigwedge^{\mbox{top}} H^0(X, \xi)$. This is a
 natural and attractive approach, but requires a certain amount
 of background to define. In the end, the overall constant (or
 function) turns out to be  irrelevant to the LDP.

There is a second approach which is less obvious but is simpler
and asymptotically equivalent as $N \to \infty$. It arises from an
auxiliary Hermitian line bundle $\lcal_{N +g} \to X$ of degree $N
+ g$, whose   space $H^0(X, \lcal_{N + g})$ of holomorphic
sections we call the {\it large vector space} (see \S \ref{LVS}
for background). Since  $\dim H^0(X, \lcal_{N + g}) = N + 1$ and
 $\dim X^{(N)} = \dim \PP H^0(X, \lcal_{N + g}) $, we regard  $\PP H^0(X, \lcal_{N + g})$ as a kind of linear model
for $X^{(N)}$. As discussed below, there exists a finite branched
holomorphic cover $X^{(N)} \to  \PP H^0(X, \lcal_{N + g})$ (away
from a certain Wirtinger subvariety) which restricts on each fiber
of the Abel-Jacobi map to an embedding of projective spaces.
 The choice of $\lcal_{N + g}$ is arbitrary but may be uniquely fixed
by picking a base point $P_0$ and defining $$\lcal_{N + g} =
\ocal((N + g) P_0)$$ to be the line bundle of the divisor $(N + g)
P_0$ (see \S \ref{BACKGROUND} for the notation). We recall (from
\cite{Gu2}, p. 107) that for each $\xi \in \mbox{Pic}^N$,  there
is an embedding
\begin{equation} \label{CANONICALIOTA} \iota_{\xi,\lcal_{N + g}} : H^0(X, \xi) \to \{s \in H^0(X,
\lcal_{N + g}): \dcal(s) \geq A_{\lcal_{N + g}, g}(\xi) \}
\end{equation}  where $A_{\lcal_{N + g}, g}: \mbox{Pic}^{N +g}  \to
X^{(g)}$ is the Abel-Jacobi map associated to $\lcal_{N + g}$ (see
\S \ref{BACKGROUND}).

We use this to show that $\ecal_N$ is analytically equivalent
(away from a certain Wirtinger subvariety) to the holomorphic
vector bundle $\tilde{\ecal}_N \to X^{(g)}$ with fiber
\begin{equation}\label{TILDEECALDEF} \tilde{\ecal}_{P_1 + \cdots + P_g} : = \{s \in H^0(X,
\lcal_{N + g}): \dcal(s) \geq P_1 + \cdots + P_g\}. \end{equation}
To be more precise, the Abel-Jacobi map $X^{(g)} \to \mbox{Jac}(X)
\simeq \mbox{Pic}^N$ is a branched cover, with branch locus along
a  Wirtinger subvariety (cf. \S \ref{AJSECT}). It turns out to be
more convenient to use $\tilde{\ecal}_N$ rather than $\PP \ecal_N$
since there is a simpler map from $\PP \tilde{\ecal}_N \to \PP
H^0(X, \lcal_{N + g})$. From a probability point of view, this
model is equivalent to that of Definition \ref{FSHAARDEF}, but for
emphasis we give it a separate definition.

\begin{maindefin}  \label{FSTILDEDEF}   Given a family of Hermitian inner products
$G_N(\xi)$ for $\xi \in \mbox{Pic}^N$, and a volume form $d
\sigma$ on $X^{(g)}$,
  the associated  {\it Fubini-Study-$X^{(g)}$} measure
 $d\tau_N^{FSg}$ on $\PP \tilde{\ecal}^N $ is the fiber-bundle product measure,
  \begin{equation} \label{gammadeftilde} \int_{\PP \tilde{\ecal}^N} F(s) d\tau_N^{FSg} (s)
  := \int_{X^{(g)}}\; \{\int_{\PP H^0(X, \xi)} F(s) dV^{FS}_{G_N(\xi)} \} \;d\sigma(P_1 + \cdots + P_g), \end{equation}
where $dV^{FS}_{G_N(\xi)}$ are the fiber Fubini-Study volume
  forms defined by the Hermitian inner products on $H^0(X, \xi)$
  induced by $G_N(\xi)$. We also denote by $\hat{\tau}_N$
  the analogous construction for Gaussian-Haar measures.
 \end{maindefin}

We now use this set-up to define a family $G_N(\omega, \nu)$ of
Hermitian inner products on $H^0(X, \xi)$.  We first  choose a
Hermitian metric $h_0$ on $\ocal(P_0)$ with curvature $(1,1)$-form
$\omega$. This  induces Hermitian metrics $h_0^{N + g}$ on all the
line bundles $\lcal_{N + g}$. Together with the positive measure
$\nu$ on $X$, we obtain the inner product $G_{N + g}(h, \nu)$
(\ref{DEFGN})  on $H^0(X, \lcal_{N + g})$ and  by restriction, on
the  subspaces $\iota_{\xi,\lcal_{N + g}} H^0(X, \xi)$. Since the
embeddings are isomorphisms, we obtain inner products on $H^0(X,
\xi)$ as $\xi $ varies over $\mbox{Pic}^N$ (see \S \ref{EMBEDIP}).
We refer to this family of inner products as an $\omega-${\it
admissible} family of Hermitian inner products (see \S
\ref{ADMHERM}). We then endow $H^0(X, \lcal_{N + g})$ and the
fibers $H^0(X, \xi), \xi \in \mbox{Pic}^N$ with the Gaussian
measures $\gamma_{G_{N + g}(h, \nu)}, $ resp. $\gamma_{G_N(h,
\nu)}$, associated to the inner products. The Gaussian measures on
$H^0(X, \xi)$ are (up to identification by the embedding)  the
conditional Gaussian measures of $\gamma_{G_{N + g}(h, \nu)}.$

 We then  projectivize to obtain Fubini-Study metrics associated
 to the inner products on the fibers $\PP H^0(X, \xi)$ and on $\PP
 (X, \lcal_{N + g})$. The induced maps (cf.
 (\ref{CANONICALIOTA})),
 \begin{equation} \label{psi} \left\{ \begin{array}{l} \psi_{\lcal_{N + g}} : \PP \tilde{\ecal}_N \to \PP H^0(X,
\lcal_{N + g}), \\ \\
\psi_{\lcal_{N + g}}[s] = [\iota_{\xi, \lcal_{N + g}}(s)],
\;\;\mbox{where} \;\; s \in H^0(X, \xi)
\end{array} \right.
\end{equation} are by definition isometric
 along the subspaces $\PP H^0(X, \xi)$.  In Proposition \ref{CANONICAL}, we show that
this map is a branched covering map of degree ${N \choose g}$ away
from the exceptional (Wirtinger) locus.  Further $\psi_{\lcal_{N +
g}}$ pulls back the Hermitian hyperplane bundle of $\PP H^0(X,
\lcal_{N + g})$ (with its Fubini-Study metric) to the natural
hyperplane bundle over $\PP \tilde{\ecal}_N$ (with the Hermitian
metric above).

We use this construction to obtain our third ensemble:

  \begin{maindefin} \label{LARGEVS} We define the projective linear ensemble
 $(\PP \ecal_N, d\lambda_{PL})$ by
  $$d \lambda_{PL} = \frac{1}{{N \choose g}} \psi_{\lcal_{N +g}}^* dV^{FS}_{G_{N + g}(h_0, \nu))} $$
  where $dV^{FS}_{G_{N + g}(h_0, \nu))}$ is the Fubini-Study volume form on $\PP H^0(X,
  \lcal_{N + g})$ and $\psi_{\lcal_{N + g}}: \PP \tilde{\ecal}_N \to \PP H^0(X,
  \lcal_{N + g})$ is the map (\ref{psi}).
  \end{maindefin}

The Fubini-Study-Haar measure and the projective linear measure
are closely related since they are top exterior powers of
differential forms which agree along the fibers of the projection
$\pcal_N \to \mbox{Pic}^N$ to a base of fixed dimension $g$. They
are compared in detail in \S \ref{PLVSFSH}. A technical
complication in this approach is that the pulled-back Fubini-Study
probability measure is only a semi-positive volume form on
$X^{(N)}$ which vanishes along the branch locus.

\subsection{Joint probability current}

 Given
a probability measure $\tau_N$ on $\ecal^N$ (or  $\PP \ecal^N$)
the corresponding JPC is defined as follows:

\begin{maindefin} \label{JPCDEF}
\begin{equation} \vec K^N(\zeta_1, \dots, \zeta_N)
: =  \dcal_* \tau_N \; \in \; \mcal(X^{(N)}) \end{equation}
 of the measure $\tau_N$ under the divisor map $ \dcal$ (\ref{ZEROSMAPg}).
 \end{maindefin}
Since we are dealing with a number of measures $\tau_N$, we often
subscript $\vec K^N$ to indicate the associated measure $\tau_N$.

  The main task of this article is to express the JPC of the
ensembles in the previous section in terms of empirical measures
of zeros (\ref{ZN}), and to  extract an approximate rate function
from it. Less formally,  we need to express the JPC in terms of
`zeros coordinates', i.e.  the natural coordinates $\{\zeta_1,
\dots, \zeta_N\}$ of $X^{N}$.  Elementary symmetric functions of
the $\zeta_j$ define local coordinates on  $X^{(N)}$.
 What makes the expression difficult is that the underlying probability
measures on $H^0(X, \xi)$ and $\ecal^N$ are expressed in terms of
coefficients relative to a basis of sections. Thus we need to
`change variables' from coefficients to zeros. As mentioned above,
we are essentially changing from variables adapted to the
Abel-Jacobi fibration structure of $X^{(N)}$ to those adapted to
its product structure.

In the genus zero case, the inverse map from zeros to coefficients
is given by the Newton-Vieta formula,
\begin{equation} \label{VIETA}   \prod_{j = 1}^N (z - \zeta_j) =
 \sum_{k = 0}^N (-1)^k e_{N - k} (\zeta_1, \dots, \zeta_N)\;\;
 z^k,
\end{equation}
where  $e_j = \sum_{1 \leq p_1 < \cdots < p_j \leq N} z_{p_1}
\cdots z_{p_j}$ are the elementary symmetric functions. The
Jacobian of the map from coefficients to zeros is thus
$|\Delta(\zeta)|^2$ where
\begin{equation}\label{DELTAPROD}  \Delta(\zeta) = \det (\zeta_j^k) = \prod_{j < k}
(\zeta_j - \zeta_k) \end{equation} is the Vandermonde determinant.
In effect, we must generalize both formulae to higher genus.
Generalizing the Vieta formula is the subject of \S
\ref{VIETASECT}. Generalizing the Vandermonde determinant formula
is the subject of \S \ref{VDSECT}.

In particular, we need the analogue of $z - w$ in higher genus. As
is well-known, the analogue is the `prime form' $E(z,w)$ of $X$
\cite{M,F,ABMNV,R}. It is a holomorphic section of a certain line
bundle over $X \times X$ with divisor equal to $D, $ the diagonal
in $X \times X$. $E(z,w)$ has  a well-known expression in terms of
theta-functions, but for our purposes it may be characterized  as
follows: the space  $H^0(X \times X, \ocal(D)) $ of holomorphic
sections of $\ocal(D)$ is  one-dimensional \cite{R}. The bundle of
which $E(z,w)$ is a section is isomorphic to $\ocal(D) \to X
\times X$. Hence to specify $E(z,w)$ we need to specify an element
of $H^0(X \times X, \ocal(D))$. Roughly speaking, we do this by
specifying that $E(z,w) \sim z - w$ near the diagonal. This
depends on a choice of local coordinates, which we define by
uniformizing, i.e. by expressing $X = \tilde{X}/\Gamma$ where
$\tilde{X}$ is the universal holomorphic cover and $\Gamma$ is the
deck transformation group. We then select a fundamental domain
$\fcal$ for $\Gamma$ in $\tilde{X}$ and use the global coordinates
on $\tilde{X}$ as local coordinates on $X$. For more details, we
refer to \S \ref{PRIMEFORM}.

 As above, we denote by $ \ocal(P)$ the {\it point
line bundle}, i.e. the line bundle associated to the divisor
$\{P\}$. For $g \geq 1$, the space $H^0(X, \ocal(P))$ is one
(complex) dimensional, and may be identified  as the pullback
under the embedding $\iota_P: X \to X \times X, \iota_P(z) = (z,
P)$ of the line bundle $\ocal(D) \to X \times X$. The section
${\bf 1}_{\ocal(P)}(z)$ is defined to be $\iota_P^* E(z,\cdot) =
E(z, P). $ Using products of  the prime form, we define `canonical
sections' $S_{\zeta_1, \dots, \zeta_N}$  of high degree line
bundles with prescribed zeros (see \ref{SZETA}).

 We also need to define a certain Bergman kernel for the
 orthogonal projection onto $H^0(X, \lcal_{N + g})$ with respect to the inner
product $ G_{N + g} (h_0, \nu)$. It is not quite the standard one,
but rather projects onto the codimension one subspace
$H_{P_0}^0(X, \lcal_{N + g})$ of sections vanishing at the base
point $P_0$. This kind of Bergman kernel is studied in \cite{SZZ}
and is called there the {\it conditional \szego kernel}. Thus, we
put
\begin{equation} \label{BN} B_N = \mbox{ the Bergman kernel
on}\;\; H^0_{P_0}(X, \lcal_{N + g}) \;\; \mbox{w.r.t. the inner
product}\; G_{N + g} (h_0, \nu).
\end{equation}  As will be shown below,  $B_N$ is almost the same as  the Bergman kernel for
a slightly modified line bundle $E_{N + g - 1}$ of degree $N + g
-1$ (see (\ref{EDEF})). Their determinants differ by products of
${\bf 1}_{P_0}$.

Before stating the result, we emphasize a further notational
convention used throughout: when discussing sections of a line
bundle $L$, we fix a local frame $e_L$ and express a section by $s
= f e_L$ where $f$ is a locally defined function. As discussed in
\S \ref{BERGMANSECTION}, the orthogonal projection onto $H^0(X,
L)$ is a section of $L \otimes L^{-1}$ (the \szego kernel). When
we express it in terms of $e_L \otimes \bar{e}_L$, the local
function is the Bergman kernel $B_L$ above. In a similar way, we
use $E(z, P)$ to be the local expression for ${\bf
1}_{\ocal(P)}(z)$ relative to the canonical frame for $\ocal(P)$.
The former is standard, but the latter is not a standard
convention.

The following Theorem expresses the JPC in terms of coordinates on
$X^{(N)}$ or more precisely its further lift to $X^N$. The
coordinates are  products of uniformizing coordinates on the
factors. As mentioned above, and as  will be clarified during the
proof, the local expressions are understood to be given on the
complement of the exceptional Wirtinger loci.

\begin{maintheo} \label{JPDHG} (I)
 The JPC  of zeros in the projective  linear  Fubini-Study probability
space $\PP \ecal_N$ with measure in Definition \ref{LARGEVS} is
given in local coordinates  by
$$\begin{array}{lll} (I)\;\; \vec K^N_{PL}(\zeta_1, \dots, \zeta_N)  & = & \frac{1}{Z_N(h)} \frac{\fcal_N(\zeta_1, \dots, \zeta_N, P_0) \prod_{k = 1}^N \left|
\; \prod_{j = 1}^g E(P_j, \zeta_k)   \cdot \prod_{j: k \not= j}
  E(\zeta_j, \zeta_k)\right|^2}{ \; \det
  \left(B_N (\zeta_j, \zeta_k) \right)_{j, k = 1}^N} \prod_{j = 1}^N d\zeta_j \wedge d \bar{\zeta}_j  \\ && \\
  && \times \left(\int_X \left| \prod_{j = 1}^g E (P_j, z) \right|_{h_g}^2 \cdot \left| \prod_{j = 1}^N E (\zeta_j, z)\right|^2_{h^N} d\nu(z) \right)^{- N - 1}
  \end{array},$$
  where $ P_1 + \cdots + P_g = A_{\lcal_{N + g}} (\zeta_1 + \cdots + \zeta_N)$.
(defined in \S \ref{AJSECT}) and $Z_N(h)$ is a normalizing
constant so that $\vec{K}^N$ has mass one. Here, $\fcal_N(\zeta_1,
\dots, \zeta_N, P_0)$ is defined in (\ref{FCALNDEF}).

Furthermore,
$$
\begin{array}{lll} (II)\;\; \vec K_{PL}^N(\zeta_1, \dots, \zeta_N) \label{eq-030209d} & = &
\frac{1}{\hat{Z}_N(h)} \frac{\exp \left( \sum_{i < j}
G_{\omega}(\zeta_i, \zeta_j) \right)  \left(\prod_{j = 1}^N
|\rho_{\omega}(\zeta_j)|^2 d^2 \zeta_j \right)}{\left(\int_{X}
e^{N \int_{X} G_{\omega}(z,w) d\mu_{\zeta}} e^{ \int_{X}
G_{\omega}(z,w) d\mu_{P(\zeta)}} d\nu(z) \right)^{N+1}}
F_N(\zeta_1, \dots, \zeta_N),.
\end{array}$$
where $d\mu_{P(\zeta)} = \sum_{j = 1}^g \delta_{P_j(\zeta)}, $
where $\rho_{\omega}$ is defined in (\ref{rho}),  and where $F_N$
is  defined in (\ref{FNDEF}) \S \ref{JPCCALCULATIONII}.
\end{maintheo}

Note that $\prod_{j = 1}^N d^2 \zeta_j$ is not well-defined on
$X^{(N)}$ but only on the Cartesian product $X^N$; however $\vec
K_{PL}^N$ is  symmetric under permutations and descend to the
quotient. We say more about this in \S \ref{XN} (see \cite{A} for
background on holomorphic forms on symmetric powers). To explain
the first formula, we observe that the numerator and denominator
involve Hermitian inner products of sections. When expressed in
terms of the same local frame, the frames and metric factors
cancel.  This will be explained in a Remark after the statement of
Proposition \ref{BASICFORMULA}. The main difference between
expressions (I) and (II) is that the difficult Bergman determinant
$\det
  \left(B_N (\zeta_j, \zeta_k) \right)_{j, k = 1}^n$ in (I) has
  been expressed in terms of products of norms of the Green's
  function (or the prime form). It is an instance of the theme mentioned above
  of relating the fiber bundle structure of $X^{(N)} $ to
 its product structure.  The identity relating the Bergman
  determinant and products of the prime form is known as the bosonization
  formula on Riemann surfaces \cite{ABMNV,VV}. This complicated identity brings in
  other special factors which we call $F_N$. When we take
  $\frac{1}{N^2} \log$ this factor will evaporate in the limit.

Since $\vec{K}^N$ is the pull-back of a smooth form under a smooth
map (defined on the complement of a hypersurface), the zeros of
the denominator must be cancelled by those of the numerator. This
is discussed after the proof in \S \ref{BERGZEROS}.

The corresponding formula in the genus zero case  $\CP^1$ is given
in Proposition 3 of \cite{ZZ}:
\begin{eqnarray}
    \label{eq-030209b}
    \vec K^N(\zeta_1, \dots, \zeta_N) & = & \frac{1}{Z_N(h)}
\frac{|\Delta(\zeta_1, \dots, \zeta_N)|^2  d^2 \zeta_1 \cdots d^2
\zeta_N}{\left(\int_{\CP^1} \prod_{j = 1}^N |(z - \zeta_j)|^2 e^{-
N \phi(z)}  d\nu(z) \right)^{N+1}}.
\end{eqnarray}
Here, we abbreviate $d^2 \zeta = d \zeta \wedge d \bar{\zeta}$.
Thus, the expression
\begin{equation}\label{DELTAG} \Delta_g(\zeta_1, \dots, \zeta_N):=
\frac{\prod_{j = 1}^g \prod_{k = 1}^N  E(P_j, \zeta_k) \cdot
\prod_{j: k \not= j}
  E(\zeta_j, \zeta_k)}{ \sqrt{\det
  \left(B_N (\zeta_j, \zeta_k) \right)_{j, k = 1}^n}} \end{equation} is
  a higher genus generalization of the Vandermonde determinant in the sense that it
  is the Jacobian of the change of variables from coefficients to zeros.
  The same Jacobian arises in bosonization formulae(see e.g.  (4.15) in \cite{ABMNV}).

For the purposes of this article, Theorem \ref{JPDHG} is mainly
useful to obtain a similar formula for the JPC $\vec K^N_{FSH}$ of
the Fubini-Study-fiber ensembles of Definitions \ref{FSHAARDEF}
and \ref{FSTILDEDEF}. These are the ensembles for which we prove
Theorem \ref{LD}. We obtain formulae for $\vec K^N_{FSH}$ by
relating them explicitly to $\vec K^N_{PL}$ as given  in Theorem
\ref{JPDHG}. The formula for $\vec K^N_{FSH}$ is too complicated
to state in the introduction, and requires many considerations
from Abel-Jacobi theory. It is stated and proved in Theorem
\ref{MAINFSHPROP}.  We summarize it as follows:

\begin{maintheo} \label{JPDFSH}
 The JPC  of zeros in the    Fubini-Study-fiber probability
spaces $\PP \ecal_N$ (resp. $\PP \tilde{\ecal}_N$) with measure in
Definition \ref{FSHAARDEF} (resp. Definition \ref{FSTILDEDEF})
pulls back to  $X^N$  as the form,
$$\vec K^N_{FSH} (\zeta_1, \dots, \zeta_N) = \frac{\jcal_N (\zeta_1, \dots, \zeta_N)}{Z_N(\omega)}
    \frac{\exp \left(\frac{1}{2} \sum_{i \not=
j} G_{\omega}(\zeta_i, \zeta_j) \right) \prod_{j = 1}^N d^2
\zeta_j}{\left(\int_{X} e^{ N \int_{X} G_{\omega}(z,w)
d\mu_{\zeta}(w)} e^{ \int_{X} G_{\omega}(z,w) d\mu_{P(\zeta)}(w)}
d\nu(z) \right)^{N+1}}, $$ where $\jcal_N$ is defined in Theorem
\ref{MAINFSHPROP}.

\end{maintheo}

The main idea is that the  volume forms $\vec K^N_{PL}$ and $\vec
K^N_{FSH}$  on $\PP \ecal_N$ only differ in `horizontal'
directions with respect to connections on $X^{(N)} \to
\mbox{Jac}(X)$ resp. $\tilde{X}^{(N)} \to X^{(g)}$, which has a
fixed dimension $g$. Hence, they are equivalent up to small errors
as $N \to \infty$.

\subsection{\label{APPROX} Approximate rate function}

We use  Theorems \ref{JPDHG} and \ref{JPDFSH}  to derive an
approximate rate functional for the large deviations principle. We
need to introduce the following functionals:

\begin{maindefin}  \label{APPROXRF}  Let $\zeta \in X^{(N)}$ and let $\mu_{\zeta}$ be as in (\ref{ZN}).
Also let $\mu_{P(\zeta)}$ be the sum of point masses at the $g$
points in the image of $\zeta$ under the Abel-Jacbobi map.  Let
$D=\{(z,z): z\in X\}$ be the diagonal. Put:
$$ \left\{ \begin{array}{l} \ecal^{\omega}_N(\mu_{\zeta}) = \int_{X\times X \backslash D}
G_{\omega}(z,w) d\mu_{\zeta}(z) d\mu_{\zeta}(w), \\ \\
 J_N^{\omega, \nu} (\mu_{\zeta}) = \log ||e^{U_{\omega}^{\mu_{\zeta}}}  e^{ \int_{X}
G_{\omega}(z,w) d\mu_{P(\zeta)}(w)} ||_{L^N(\nu)}
 \end{array}
\right.$$ Then put $$ I_N^{\omega, \nu} (\mu_{\zeta}) =
-\frac{1}{2} \ecal^{\omega}_N(\mu_{\zeta}) + \frac{N+1}{N}
J_N^{\omega, \nu} (\mu_{\zeta}). $$
\end{maindefin}

We then have,

\begin{mainprop} \label{APPROXRATEa} Write $\vec K_{FSH}^N(\zeta_1, \dots, \zeta_N) =
D^N(\zeta_1, \dots, \zeta_N) \prod_{j = 1}^N d^2 \zeta_j$, and
similarly define $D^N_{FSH}$.  Then,
$$D^N(\zeta_1, \dots, \zeta_N) = \frac{1}{\hat{Z}_N(h)} e^{- N^2  \left(I_N^{\omega, \nu} (\mu_{\zeta}) +
O(\frac{1}{N})\right)}.
$$

\end{mainprop}

We observe that $D^N$ is precisely the coefficient relating
$\vec{K}^N$ (defined using the fiber bundle structure of $\PP
\ecal_N$) to the local Lebesgue product measure on   $X^{(N)}$. We
could use any product measure $\omega_0 \boxtimes \cdots \boxtimes
\omega_0$ here, but it is sufficient to use $\prod_j d^2 \zeta_j$
in local uniformizing coordinates. The proof of Theorem \ref{LD}
from Proposition \ref{APPROXRATEa} uses almost the same analysis
as in \cite{ZZ}. Hence the emphasis of this article is on the
proof of Theorems \ref{JPDHG} and \ref{JPDFSH} and of Proposition
\ref{APPROXRATEa}.

Finally, we thank R. Wentworth and S. Wolpert for helpful
conversations on the prime form and on  bosonization formulae.

\section{\label{BACKGROUND} Background }

The main purpose of this section is to  review  the Abel-Jacobi
theory we need. To the extent possible, we follow the notation of
\cite{ACGH,ABMNV,Gu1,Gu2}. We also briefly review some definitions
and notation regarding Gaussian and Fubini-Study random
holomorphic sections. We use throughout the same notation and
terminology as in \cite{ZZ}, and when the definitions are
essentially the same as in genus zero, we refer the reader to that
article.

We denote by $X$ a compact smooth Riemann surface of genus $g$.
Througout we fix a base point $P_0 \in X$.  By the uniformization
theorem, it may be expressed as $\tilde{X} \backslash \Gamma$
where $\tilde{X}$ is the universal conformal cover ($= \CP^1$ if
$g = 0$, $= \C$ if $g = 1$ and $= \hcal$ (the upper half plane) if
$g \geq 2$). Here, $\Gamma$ is the deck transformation group or
fundamental group, realized as conformal automorphisms of
$\tilde{X}$. We also denote by $A_1, \dots, A_g; B_1, \dots, B_g$
generators of $\Gamma$ defined as in \cite{Gu2} (page 4). They
depend on a choice of base point $P_0$ and a marking of $X$; we
refer to \cite{Gu2} for the background.

We further denote by $\omega_1, \dots, \omega_g$ a basis for the
holomorphic differential one forms of $X$. We assume the $A, B$
cycles and basis are canonical, i.e. $\int_{A_i} \omega_j =
\delta_{i j}$. For any holomorphic differential, we denote by
$w(z) = \int_{z_0}^z \omega$ its Abelian integral, a holomorphic
function on $\tilde{X}$ with $w(z_0) = 0$.  In the case of the
basis differentials $\omega_k$ the Abelian integrals are denoted
$w_k$.

We denote line bundles by $L, \xi$ or by divisors of holomorphic
sections $s \in H^0(X, \xi)$. The Chern class of $L$ is denoted as
usual by $c_1(L)$. We also put $h^0(X, L) = \dim H^0(X, L)$.

\subsection{\label{POINT} Point  line bundles over Riemann surfaces of genus $g > 0$}

 Given $P \in
X$, there exists a unique holomorphic line bundle $\ocal(P)$ with
the properties that $\dim H^0(X, \ocal(P)) = 1$ and $c_1(\ocal(P))
= 1$ and so that each non-zero section vanishes at (and only at)
the point $P$. These are the line bundles  defined by a point
divisor $\{P\}$, defined by an atlas of two charts, $U_0 = X
\backslash \{P\}$ and $U_1 = $ a small disc around $P$, and the
transition function $g_{0 1} = z - P$.

The line bundle $\ocal(P)$ has a canonical section   ${\bf
1}_{\ocal(P)}$ corresponding to the meromorphic function $1$ under
the correspondence between meromorphic functions and holomorphic
sections of $H^0(X, \ocal(P))$. It  is the section locally
represented by $1$ on $X \backslash \{P\}$.  Our notation follows
\cite{ABMNV}; the same section is called the `constant section'
and is denote by ${\bf 1}$ in \cite{Fal}.

These sections are canonical once we fix the coordinates and
atlas, but that choice itself is non-canonical. The choices can be
made consistently as $P$ varies by working on $X \times X$. As in
the introduction, we define local coordinates by uniformizing. We
let $\{U_{\alpha}, z_{\alpha}\}$ denote the corresponding atlas of
$X$.  We then define an atlas of local trivializations of
$\ocal(D)$ taking the cover $\{U_{\alpha} \times U_{\alpha},
U_0\}$ of $X \times X$ where $U_0 = X \times X \backslash D$. Then
the local holomorphic functions $f_{\alpha}(Q, P) = z_{\alpha}(P)
- z_{\alpha}(Q)$ are local defining functions of $D$ in
$U_{\alpha} \times U_{\alpha}$. Hence their ratios
$\frac{f_{\alpha}}{f_{\beta}}$ give transition functions for
$\ocal(D)$ and $\{f_{\alpha}\}$ is a section. If we fix $P$ in the
slice $X \times \{P\}$ we obtain a family of sections
$f_{\alpha}(z, P)$ of the family of line bundles $\ocal(P)$ which
pull back to ${\bf 1}_{\ocal(P)}$. We explain the relation to
theta functions in \S \ref{PRIMEFORM}.

\subsection{\label{XN} Configuration spaces}

 The  $r$-rold symmetric product $X^{(r)}$
of a Riemann surface $X$ is the quotient of the Cartesian product
$X^r$ by the action of the symmetric group $S_r$. The action has
non-trivial isotropy along the large diagonals $\zeta_j = \zeta_k$
($j \not= k$). However,  $X^{(r)}$  is a complex analytic manifold
of dimension $r$ and the natural projection
$$p_r: X^r \to X^{(r)}$$
is a complex analytic $r!$ sheeted branched covering map with
branch locus on the large diagonals. We briefly review this fact
and the associated local  coordinates on $X^{(r)}$; the
degeneracies of $p_r$ will be important later in explaining
cancellations of zeros and poles in $\vec K^N_{FSH}.$ We refer to
(\cite{Gu2}, Theorem 9; or \cite{GH}, p. 236).

Let $D = p_1 + \cdots + p_r \in X^{(r)}$. Let $U_i$ be a
neighborhood of $p_i$ and $z_i$ a local coordinate in $U_i$. We
assume that $U_i \not= U_j$ if $p_i \not= p_j$ and $U_i = U_j, z_i
= z_j$ if $p_i = p_j$. Let $\sigma_1, \dots, \sigma_r$ be the
elementary symmetric functions. Then
$$q_1 + \cdots + q_r \to (\sigma_1 \{z_i(q_i)\}, \dots, \sigma_r
\{z_i(q_i)\}) $$ is a local coordinate system on $p_r(U_1 \times
\cdots \times U_r)$ defining $X^{(r)}$ as a complex manifold. Away
from the large diagonals, $p_r$ is a covering map and we can use
$(z_1(p_1), \dots, z_r(p_r))$ as local coordinates.

A volume form on $X^{(r)}$ pulls back under $p_r$ to a smooth form
on $X^r$ which may be written in terms of local coordinate volume
form $$d \sigma_1 \wedge \cdots \wedge d \sigma_r = \Delta(\zeta)
d \zeta_1 \wedge \cdots \wedge d \zeta_r,$$ where $\Delta$ is the
Vandermonde determinant. As this shows, any smooth volume form on
$X^{(r)}$ lifts to a semi-volume form with zeros on the branch
locus of $p_r$. For more on holomorphic forms on $X^{(r)}$ we
refer to \cite{A}.

We also recall that $X^{(g)}$ is a projective \kahler manifold. To
see this, let $L \to X$ be an ample line bundle and let $\tilde{L}
= L_{z_1} \boxtimes \cdots \boxtimes L_{z_g}$ be the exterior
tensor bundle on $X^r$.  Then $\bigotimes_{\tau \in S_r} \tau^*
\tilde{L}$ defines an ample line bundle on $X^r$ which is
invariant under permutations and therefore descends to $X^{(r)}$.
Positively curved metrics on this bundle give  a supply of \kahler
forms on $X^{(r)}$ which can be used to specify the volume forms
on $X^{(g)}$ in Definition \ref{FSTILDEDEF}.

\subsection{Products of point line bundles and canonical sections}

Point bundles can be used to generate all other line bundles. We
denote by $\ocal(P_1 + \cdots + P_n) = \ocal(P_1) \otimes \cdots
\otimes \ocal(P_n)$ the tensor product of the point bundles. In
general, given two divisors $D, D'$, $\ocal(D) \ocal(D') = \ocal(D
+ D')$.

Products of point line bundles have a canonical section, depending
only on the local trivializing charts and coordinates used to
define $\ocal(P)$.

\begin{maindefin} \label{CANONICALSECTION} The canonical section of $\ocal(\zeta_1 + \cdots + \zeta_N)$
is defined by
$${\bf 1}_{\zeta_1 + \cdots + \zeta_N}(z) : =    \prod_{j = 1}^N
{\bf 1}_{\ocal(\zeta_j)}(z). $$
\end{maindefin}

By Riemann-Roch,  for $N \geq 2g - 1$ we have
\begin{equation} \label{DIMENSION} \dim H^0(X, \ocal(P_1 + \cdots + P_N)) = N + 1 - g.
\end{equation} This is the source of the difference between $g =
0$ and $g > 0$: the number of zeros of sections is  greater by $g$
than the dimension of the projective  space of sections.
 Indeed, the configurations of zeros of sections $s \in H^0(X, \xi)$ form the fiber of
 the Abel-Jacobi map  $ A_N: X^{(N)} \to \mbox{Jac}(X)$. This is the next object to
 review.

 \subsection{Jacobian and Picard varieties}

The Jacobian variety $\mbox{Jac}(X)$ may be identified with the
compact complex $g$-dimensional torus $\C^g/\pcal$ of unitary
characters $\chi$ of the fundamental group $\pi_1(X)$ or
equivalently of $H^1(X, \Z)$. Here, $\pcal$ is the period lattice
of $X$.

$\mbox{Pic}^0 = \mbox{Div}_0/\mbox{PDiv}$ is the space of divisor
classes  of degree zero, i.e. divisors of degree 0 modulo
principal divisors (divisors of meromorphic functions). We denote
the divisor class of $D$ by $[D]$. Thus, $\mbox{Pic}^0 $ is a
compact complex torus of dimension $g$. For general $r \in \Z$,
the Picard variety $\mbox{Pic}^r$ is the space of holomorphic line
bundles of Chern class $r$.
 $\mbox{Pic}^r$ is a
homogeneous space for $\mbox{Pic}^0$ under tensor product.  If we
  fix a base point $P_0$,  we obtain an identification  $\mbox{Pic}^r(X) \simeq \mbox{Pic}^0(X)$ by
 $\xi \in \mbox{Pic}^r(X) \to \xi \ocal(- r P_0).$
 We recall that the map from  divisors to line
bundles by
\begin{equation}\label{LBDIV}  \dcal = \sum_j \nu_j p_j \in X^{(r)} \to \ocal(\dcal) = \prod_j\;
 \ocal(p_j)^{\nu_j} \in Pic^{(r)} \end{equation} is surjective and that
principal divisors correspond to holomorphically  trivial line
bundles. Hence,  $\mbox{Pic}^0$ can be identified the space of
degree 0 line bundles modulo equivalence, or with flat line
bundles induced by unitary characters of $\pi_1(X)$, i.e.
$\mbox{Pic}^0 \simeq \mbox{Jac}(X)$. Specifically, the map is
defined by
\begin{equation}\label{JACOBI}  \begin{array}{l}  \ocal(p_1) \cdots  \ocal(p_r)  \ocal(q_1)^{-1} \cdots  \ocal(q_r)^{-1} \to \chi, \;\mbox{with}\\ \\
 \chi(A_k) = 1, \;\; \chi(B_k) = \exp 2 \pi i \sum_{j = 1}^r
\int_{q_j}^{p_j} \omega_k, \end{array}\end{equation} where the
path of integration  is a simple arc $\delta_j$ from $q_j \to p_j$
(see \cite{Gu2}, page 24).

\subsection{\label{AJSECT}  Abel-Jacobi maps, Picard varieties and Wirtinger varieties}

 As above we fix a basepoint $P_0$ and  the effective divisor  $g P_0$ of degree
 $g$. The Abel-Jacobi (or  Abelian sums) map $A_r: X^{(r)} \to
\mbox{Jac}(X)$ is defined by \begin{equation} \label{ABEL}
A_r(\zeta_1, \dots, \zeta_r) = \sum_{j = 1}^r \int_{P_0}^{\zeta_j}
 \left(\begin{array}{l} \omega_1 \\ \cdots \\ \omega_g
 \end{array}\right) \in \C^g/\pcal.
 \end{equation}
 Abel's theorem
states that the image is $0$ if $\sum_{j = 1}^r \zeta_j - r P_0$
is a principal divisor (the divisor of a meromorphic function).

When $r = g$ the Abelian sums map is an analytic isomorphism away
from a (Wirtinger) hypersurface. It may be re-formulated in terms
of Picard varieties as follows:
 The
map $P_1 + \cdots + P_g \to \ocal(g P_0 - (P_1 + \cdots P_g))$ is
an analytic diffeomorphism
$$A_{g}: X^{(g)} \simeq Pic^0(X)$$ outside of a codimension one
subvariety. That is,  any $\xi \in Pic^0(C)$ may be represented in
the form
$$\xi = \ocal(P_1 + \cdots + P_g) \ocal( g P_0)^{-1}, $$
for some $P_1 + \cdots + P_g$. The representation is unique when
$\dim H^0(X, \ocal(P_1 + \dots + P_g) = 1$, and this holds for
generic $P_1 + \cdots + P_g$ (\cite{Gu1} (pages 116-120)).

The exceptional points lie on Wirtinger varieties, which play an
important role in the formulae for the JPC. There are several
related definitions accordingly as we regard them as subsets of
$\mbox{Jac}(X)$ or $\mbox{Pic}^N(X)$ or $X^{(g)}$.  The Wirtinger
varieties $W_r \subset \mbox{Jac}(X)$ are defined by $W_r = W_1 +
\cdots + W_1 \subset \mbox{Jac}(X)$ where $W_1$ is the image of
$X$ under $A_1$. They depend on the choice of base point $P_0$,
and one has $\dim W_r = r$ for $1 \leq r \leq g$ and $W_g =
\mbox{Jac}(X)$. We also recall {\it Jacobi's inversion formula}:
The Abelian sums map $A_g: X^{(g)} \to \mbox{Jac X}$ is a
surjective holomorphic map with a codimension one singular set of
effective divisors $P_1 + \cdots + P_g$ for which $\dim H^0(X,
\ocal(P_1 + \cdots + P_g)) = 2$; equivalently, there exists a
different element  $Q_1 + \cdots + Q_g$ for which $\ocal(P_1 +
\cdots + P_g) = \ocal(Q_1 + \cdots +Q_g).$ The image of this set
under $A_g$ is $W^1_g$, which has dimension $g - 2$ by the
Brill-Noether formula (see \cite{ACGH}). Viewing $W^1_g \subset
\mbox{Pic}^g(X)$ as the subset with $\dim H^0(X, \xi) = 2$, one
has a map $X \times W^1_g \to W_{g - 1}$ with $(P_0, \xi) \to \xi
\ocal(- P_0)$, whose sections correspond to the sections of $\xi$
vanishing at $P_0$.  Fixing the base point,  this map embeds
$W_g^1 \subset W_{g -1}$ as a codimension one subset.

 Fix a line bundle $\lcal_{N + g}$ of degree $g + N$.
Let $\zeta_1 +\dots + \zeta_N \in X^{(N)}$.  Then there exists a
point $P_1 + \cdots + P_g$ so that $ \ocal(\zeta_1) \otimes \cdots
\otimes \ocal(\zeta_N) \simeq \lcal_{N + g} \ocal(P_1 + \cdots +
P_g)^{-1} . $ The  point is unique  when $\lcal_{N + g} \otimes
\ocal( - (\zeta_1 + \cdots + \zeta_N)) \notin W_g^1, $ i.e. lies
outside of the codimension one Wirtinger subvariety $W^1_g$ of
line bundles $\xi$ of degree $g$ with $\dim H^0(X,\xi) \geq 2$.

We  take $\lcal_{N + g}$ to be $(N + g) P_0$ throughout, and
further define
\begin{equation} \label{WIRT} X_{N + g}^{(N)} = \{\zeta_1 +
\cdots + \zeta_N \in X^{(N)}: (N + g) P_0 - \zeta_1 +\dots +
\zeta_N  \in W^1_g\},  \end{equation} and
\begin{equation} \label{ALCAL} A_{\lcal_{N + g}}: X^{(N)} \backslash X_{N + g}^{(N)}  \to X^{(g)}, \;\;\; A_{\lcal_{N + g}}(\zeta_1 + \cdots
+ \zeta_N)  = P_1 + \cdots + P_g.
\end{equation}  where as above
$ (N + g) P_0 - (\zeta_1 + \cdots + \zeta_N) = P_1 + \cdots + P_g.
$ for a unique $P_1 + \cdots + P_g.$ In the notation of
\cite{ACGH}, $X^{(N)}_{N + g}$ is the inverse image of $W^1_g$
under the Abel sums maps $X^{(N)} \to \mbox{Jac}(X).$

\begin{maindefin}\label{CANONFACTOR}  Let $\xi  \in \mbox{Pic}^N(X)$ and let $s \in \PP
H^0(X, \xi)$. If $\ocal((N + g) P_0 - \dcal(s)) \notin W^1_g$ we
define the {\it canonical factor} associated to $[s]$ to be the
factor $\prod_{j = 1}^g {\bf 1}_{\ocal(P_j)}$ where $A_{\lcal_{N +
g}} (\dcal(s)) = P_1 + \cdots + P_g$.
\end{maindefin}


\subsection{\label{PICBUN} Picard bundles, vortex moduli space and Poincar\'e
bundles}

In  the introduction, we defined the Picard bundles (\ref{ECALVB})
and their projectivizations  $\PP \ecal_N$, the vortex moduli
spaces. It is well-known that $\PP \ecal_N$ is analytically
equivalent to $X^{(N)}$ for $N \geq 2g - 1$. For the proof we
refer to \cite{Gu2}, Corollary 2 to Theorem 10; and Corollary 2 to
Theorem 15. See also \cite{ACGH} and also \cite{Mat,Sch} for the
original proofs.

In \cite{FL} it is proved that the Picard bundles are negative
complex vector bundles, i.e. that $\ecal_N^*$ is an ample vector
bundle. It is equivalent that $\ocal_{\PP}(1) \to \PP (\ecal_N)$
is an ample line bundle.  See also \cite{ACGH}, Ch. VII.


\subsection{\label{THETASTUFF} Theta functions, theta divisor and Riemann's vector
$\Delta$}

Riemann's theta function $\theta$  is a section of a certain line
bundle $\Theta_{\alpha} \to \mbox{Jac}(X)$.  Its divisor is also
denoted by $\Theta$. We refer the reader to
\cite{DP,F,Gu1,Gu2,ABMNV,VV} for background. It is also customary
to denote by $\Theta$  the set of line bundles $\xi \in
\mbox{Pic}^{g-1}(X)$ such that $h^0(X, \xi) = 1$, i.e. which have
a global section (as in \cite{Fal,ABMNV}).

As above, we  denote by $W_{g-1} \subset \mbox{Pic}^{g-1}(X)$ the
subvariety of effective divisors of degree $g-1$.  Riemann's
vanishing theorem states that there exists a point $\Delta \in
\mbox{Pic}^{g-1}(X)$ so that $\Theta = W_{g-1} + \Delta$ (i.e. it
is the translate of $W_{g-1}$ by this point of the torus). Thus,
$\theta(\sum_{j = 1}^{g -1} P_j - \Delta) \equiv 0$ and $\theta(z)
= 0$ if and only if $ z=\sum_{j = 1}^{g -1} P_j - \Delta$ for some
$P_j$. We also denote by  $\theta[\alpha]$ the theta function with
characteristic $\alpha$ (a choice of spin structure).

The  `map' $\zeta_1 + \cdots + \zeta_N \to P_1 + \cdots + P_g$ in
(\ref{ALCAL})  is the composition of the Abel sums map $X^{(N)}
\to \mbox{Jac}(X)$ with the Jacobi inversion `map' $\mbox{Jac}(X)
\to X^{(g)}$. The latter may be described as follows (\cite{ACGH},
p. 28): Let $A_1: X \to \mbox{Jac}(X)$ be the Abel embedding. Then
the  inversion map $\psi: X^{(g)} \to \mbox{Jac}(X)$ is given by
$\psi(\lambda) = A_1^* (\Theta_{\lambda + \Delta})$ as long as
$A_1(X) $ is not contained in the translate $\Theta_{\lambda}$ of
$\Theta$ by $\lambda$. Here, $ A_1^* (\Theta_{\lambda + \Delta})$
is the effective divisor of degree $g$ on $X$. Thus, we have
\begin{equation} \label{PSTUFF} P_1 + \cdots + P_g = A_1^* (\Theta_{A_N (\zeta_1 + \cdots + \zeta_N) +
\Delta}), \end{equation} where $A_N$ is (as above) the Abelian
sums map with basepoint $P_0$.

%

\subsection{\label{PRIMEFORM} Canonical section of a point line bundle and the prime
form}

The canonical sections  ${\bf 1}_{\ocal(P)}(z)$ are defined rather
abstractly. After making identifications, they   may be more
concretely expressed in terms of $\theta$-functions. The prime
form is a holomorphic differential form on $\tilde{X}$  of type
$(-1/2, -1/2)$ of the form $ \frac{z - w}{\sqrt{dz} \sqrt{dw}}. $
The degree of the canonical bundle is $2g - 2$, and that of the
square root spin bundles is $g - 1$. So $ \frac{z - w}{\sqrt{dz}
\sqrt{dw}} $  has a zero at $z = w$ plus $g - 1$ other zeros at
points independent of $w$. To remove the extra zeros,   one
defines the prime form by  (cf. \cite{F}, Definition 2.1)
\begin{equation} E(z,w) = \frac{\theta[\alpha](w - z)}{\sqrt{\omega_{\alpha}(z)} \sqrt{\omega_{\alpha}(w)}} \end{equation}
where $\alpha$ is an odd spin structure (i.e. a choice of square
root of the canonical bundle $K_X$) and $\sqrt{\omega_{\alpha}}$
is a certain holomorphic section of the corresponding spin bundle.
This  prime form vanishes only when $z = w$.  To tie this
discussion together with the abstract one in \S \ref{POINT}, we
have  (see e.g. \cite{DP}, (6.54)):
\begin{mainprop} Let $\iota_w: X \to X \times X$ be the map $\iota_w(z) = (z, w)$, let $\mu(z,w) = A_1(z - w)$, and let $\Theta$ be the
theta-divisor. Then
  each fixed $w,$

\begin{itemize}

\item $  \iota_w^* \mu^* \Theta \otimes K^{-1/2}$ is a line bundle
of degree $g - (g - 1) = 1$;

\item $  \iota_w^* \mu^* \Theta \otimes K^{-1/2} = \zeta_w $

\item
   ${\bf 1}_w(z) = E(z,w) \in H^0(X, \iota_w^* \mu^* \Theta \otimes K^{-1/2})$
   \end{itemize}

   \end{mainprop}

It is not important in this article whether we use the explicit
formula in terms of theta functions or the more abstract
definition in terms of point line bundles. We will be taking the
Hermitian norms of these sections and construct the metrics so
that the isomorphism from the point line bundle setting to the
theta function setting is isometric.  We denote a Hermitian metric
on $\ocal(P)$ by $h_P$ so that the norm of ${\bf 1}_{\ocal(P)}(z)$
is $||{\bf 1}_{\ocal(P)}(z)||_{h_P(z)}$. It is equivalent to
define a Hermitian metric on $\ocal(D) \to X \times X$ and pull it
back under $\iota_P$, so that $||{\bf 1}_{\ocal(P)}(z)||_{h_P} =
||E(z, P)||_{h(z) \boxtimes h(P)}. $

\subsection{Hermitian metrics and Chern classes}

Let $h$ be a smooth Hermitian metric on a holomorphic line bundle
$L \to X$. Its Chern form is defined by
\begin{equation}\label{curvature} c_1(h)= \omega_h : = -\frac{\sqrt{-1}}{2 \pi}\ddbar \log \|e_L\|_h^2\;,\end{equation} where $e_L$ denotes a local
holomorphic frame (= nonvanishing section) of $L$ over an open set
$U\subset M$, and $\|e_L\|_h=h(e_L,e_L)^{1/2}$ denotes the
$h$-norm of $e_L$. We say that $h$ is positive if the (real)
2-form $\omega_h $  is a positive $(1,1)$ form,  i.e.  defines a
\kahler metric.

For any smooth Hermitian metric $h$ and local frame $e_L$ for $L$,
we  write $\|e_L\|_h^2 = e^{-\phi} $ (or,  $h = e^{- \phi}$), and
\begin{equation} \label{DDCPHI} \omega_h = \frac{\sqrt{-1}}{2 \pi} \ddbar \phi  = dd^c
\phi. \end{equation} We  refer to $\phi = - \log ||e_L||_h^2$ as
the potential of $\omega_h$ in $U$, or as the \kahler potential
when $\omega_h$ is a \kahler form.
 As in \cite{ZZ}, we are interested in
general smooth metrics, not only those where $\omega_h$ is
positive.  The metric $h$ induces Hermitian metrics $h^N$ on
$L^N=L\otimes\cdots\otimes L$ given by $\|s^{\otimes
N}\|_{h^N}=\|s\|_h^N$.
 The
 $N$-dependent factor $e^{- N \phi }$ is then the local expression of   $h^N$
 in the local frame $e^N$.

In the reverse direction, suppose that we are given a smooth
$(1,1)$ form $\omega$ with $\int_X \omega = 1$ and a line bundle
$L$ of degree one. Then there exists a  Hermitian metric $h$ on
$L$, unique up to a multiplicative constant,   with $\omega_{h} =
\omega$. To see this, let $h_*$ be any Hermitian metric on $L$.
Then $\int_X (\omega - \omega_*) = 0$ so there exists a $\phi_0
\in C^{\infty}(X)$ orthogonal to the constant functions so that
$\omega - \omega_* = dd^c \phi_0$. It is unique up to an additive
constant but can be normalized to have integral zero with respect
to $\omega_*$.  Then $h = e^{- \phi_0} h_*$.

We adopt the following terminology from \cite{Fal,F} and
elsewhere: Given a    real $(1,1)$ form  $\omega_0 \in H^2(X,
\Z)$, a Hermitian metric on a line bundle $\xi \to X$ is called
{\it $\omega_0$-admissible} if its curvature $(1,1)$ form
(\ref{DDCPHI}) equals $\omega_0$.

\subsection{\label{HIP} Hermitian inner products on $H^0(X, \xi)$}

As mentioned in the introduction, and as reviewed below, our
Gaussian and Fubini-Study measures are induced by a choice of data
$(\omega_0, \nu)$ where $\omega_0 \in H^2(X, \Z)$ has $\int
\omega_0 = 1$ and where $\nu$ is a probability measure on $X$
satisfying the following two technical assumptions: First is the
weighted Bernstein-Markov condition (see \cite{B,ZZ}  or
\cite{BB}, Definition 4.3 and references):
\medskip

\noindent {\it For all $\epsilon
> 0$ there exists $C_{\epsilon} > 0$ so that
\begin{equation} \label{BM} \sup_K \|s(z)\|_{h^N} \leq C_{\epsilon}
    e^{ \epsilon N}
||s||_{G_N(h, \nu)}\,, \quad s\in H_0(\CP^1,\ocal(N)).
\end{equation}
} Here, and throughout this article, we write  \begin{equation}
\label{K} K = \;\; \mbox{ supp}\; \nu. \end{equation} Second is
the assumption that
\begin{equation} \label{REGULAR} K\;  {\it is\; non-thin\;at \;all \; of \; its\;
points}.
\end{equation}  We refer to \cite{ZZ} for further discussion of these
rather mild assumptions.

We only assume that $h$ is a $C^{\infty}$ metric as in
\cite{SZ,ZZ,B,Ber1}. In the local frame any holomorphic section
may be written $s = f e$ where $f \in \ocal(U)$ is a local
holomorphic function. The inner product (\ref{DEFGN}) then takes
the form,
\begin{equation} \label{INNERPRODUCTa} ||s||_{G_N(h, \nu)} =
\int_{\C} |f(z)|^2 e^{-  N \phi} d\nu(z).
\end{equation}

\subsection{\label{BERGMANSECTION} \szego projectors and Bergman kernels}

Let $(L, h)$ be any Hermitian holomorphic line bundle, let $\nu$
 be a probability measure, and let
$G = G(h, \nu)$ be the Hermitian inner product
(\ref{DEFGN})-(\ref{INNERPRODUCTa}). We denote by  $\Pi_{G}:
L^2(X, L) \to H^0(X, L)$  the orthogonal projection with respect
to $G$. It is a section of $L \otimes \bar{L} \to X \times X$. If
we choose a local frame $e_L$ for $L$, the it may be expressed as
$\Pi_G = B_G(z,w) e_L \otimes e_L^*$. The coefficient is the
Bergman kernel function relative to a frame $e_L$. If $S_j$ is an
orthonormal basis of $H^0(X, L)$ with respect to $G$, then locally
$S_j = f_j e_L$ where $f_j$ are local holomorphic functions and
$B_G (z,w) = \sum_{j = 1}^d f_j(z) \overline{f_j}(w)$ where $d =
\dim H^0(X, L)$. Thus, if $s = f e_L$ is any section,
$$\Pi_G s(z) = \left(\int_X B_G(z,w) f(w) e^{- \phi} d\nu\right) e_L(z), , \;\;\; (e^{-
\phi} = |e_L|^2_h) $$

\subsection{\label{COHERENTSTATE} Coherent states}

Given the inner product $G =G_{(h, \nu)}$ on $H^0(X, L)$ and  a
point $P \in X$, the associated coherent state is defined by
\begin{equation} \label{CS} \Phi_{G}^{P}: =
  \Pi_{G}(\cdot, P). \end{equation}
  Its important property is that
  \begin{equation} s(P) = \langle s, \Phi_{G}^P \rangle_G, \;\;\; s \in H^0(X, L). \end{equation}
  Thus, $\Phi^P_G$ represents the evaluation functional at $P$. It
  is also useful to a scalar valued evaluation functional by
  picking a non-zero $v_P \in L_P$ and tensoring $\Phi_G^P \otimes
  v_P^*$, i.e.
producing $\langle s(P), v_P \rangle_G$.

\subsection{\label{FSVOL} Fubini-Study volume form}

 Let $Z
\in \C^{d + 1}$ and let $||Z||^2 = \sum_{j = 1}^{d + 1} |Z_j|^2. $
In the open dense chart $Z_0 \not= 0$, and
 in affine coordinates $w_j =
\frac{Z_j}{Z_0}$, the Fubini-Study  volume form is given  by,
\begin{equation} \label{FSAFFINE} dVol_{I} = \frac{\prod_i dw_i \wedge d\bar{w}_i}{(1 +
||W||^2)^{d + 1}}. \end{equation}

We sometimes work with a basis which is not orthonormal for the
inner product, and therefore  need a more general formula where
the inner product $||Z||^2$ is replaced by any Hermitian inner
product $||A Z||^2$ on $\C^{d + 1}$ where $A \in GL(d + 1, \C)$.
The Fubini-Study metric for this inner product  is  $\ddbar \log
||A Z||^2$. On the affine hyperplane $\{(w, 1)\}$, the  matrix $A = \begin{pmatrix} \acal & \vec{b} \\ & \\
\vec{c} & d \end{pmatrix}$ `acts'   by the projective linear map
$A_{\PP} \cdot W = \frac{\acal W + \vec{b}}{\langle \vec{c}, W
\rangle + d} $; `acts' is in quotes because the group does not
preserve this hyperplane.  A general inner product on $\C^{d + 1}$
may be expressed in the form $||A Z||^2$ for some $A \in GL(N + 1,
\C)$. The associated Fubini-Study metric is $\frac{i}{2 \pi}
\ddbar \log ||A Z||^2 = A^* \frac{i}{2 \pi} \ddbar \log || Z||^2.
$ Hence, in homogeneous coordinates, the induced volume form in
the affine chart is
\begin{equation} \label{FSAFFINEA} dVol_{A} = \left( (w, 1)^* A^* \frac{i}{2 \pi} \ddbar \log || Z||^2 \right)^d=
 A_{\PP}^* \frac{\prod_i dw_i \wedge d\bar{w}_i}{(1 +
||W||^2)^{d + 1}}. \end{equation}

As in \cite{ZZ}, under the natural projection $\pi: \C^{d + 1} -
\{0\} \to \CP^d$,
\begin{equation}\label{VOLAc} \begin{array}{lll}
 \pi^* d\mbox{Vol}_{A}
& = &| \det A|^2 |(A Z)_0|^2 \\ && \\ && \cdot \left(
\frac{\partial}{\partial Z_0} \wedge \frac{\partial}{\partial
\bar{Z}_0} \vdash A^* (dZ_0 \wedge d\bar{Z}_0)  \right)^{-1}
\left( \frac{ \prod_{j = 1}^{d } dZ_j \wedge d \bar{Z}_j}{{||A
Z||^{2 (d + 1)}}}\right),
 \end{array} \end{equation}
where $\left( \frac{\partial}{\partial Z_0} \wedge
\frac{\partial}{\partial \bar{Z}_0} \vdash A^* (dZ_0 \wedge
d\bar{Z}_0)  \right)$ is the coefficient of $dZ_0\wedge d\bar Z_0$
in the form $d(A^*Z)_0\wedge d\overline{(A^* Z)_0}$.

\section{\label{EMBEDIP} Inner products induced by
the canonical embedding}

In this section, we provide background and definitions for the
objects introduced in \S \ref{PLSFSH}. In particular, we define
the large vector space $H^0(X, \lcal_{N + g})$  and  the
embeddings of the spaces $H^0(X, \xi)$ with $\xi \in
\mbox{Pic}^N(X)$ into it.

\subsection{\label{LVS} The large vector space $H^0(X, \lcal_{N + g})$}

Given a line bundle $L$ and a  divisor $D = \sum_i a_i P_i$, let
$\lcal(L; D)$ denote the vector space of meromorphic sections $s$
 satisfying $D + \dcal(s) \geq 0$. Let $s_0$ be a
global holomorphic  section of $[D]$ with $\dcal(s_0) = D$. We
recall that if $D$ is an effective divisor and  $s_0$ be a section
of $H^0(X, [D])$ with $\dcal(s_0) = [D]$, then   multiplication by
$s_0$ gives an identification $H^0(X, \ocal(L \otimes [- D]) )
\simeq \lcal(L;- D) . $

We make constant use of the following case (see \cite{Gu2} page
107):
  Let $\lcal_{N + g} = \ocal((N + g) P_0) \in \mbox{Pic}^{N + g}$, let $\zeta = \zeta_1 + \cdots + \zeta_N \in
X^{(N)} \backslash X^{(N)}_{N + g}$, i.e. assume that $\ocal((N +
g) P_0) \ocal(- (\zeta_1 + \cdots + \zeta_N)) \notin W^1_g$, and
let $P_1 + \cdots + P_g = A_{\lcal_{N + g}}(\zeta_1 + \cdots +
\zeta_N)$. Then multiplication by $\prod_{j = 1}^g {\bf
1}_{\ocal(P_j)}$ defines the isomorphism (\ref{CANONICALIOTA})
\begin{equation} \label{ISO2b} \begin{array}{lll} H^0(X, \ocal(\zeta_1 + \cdots + \zeta_N)) &\simeq & H^0(X,
\ocal((N + g) P_0 - (P_1 + \cdots + P_g)) \\ && \\ &\simeq & \{s
\in H^0(X, \ocal((N + g) P_0)): \dcal(s) \geq P_1 + \cdots +
P_g\}.
\end{array}
\end{equation}

  We now take the product of the sections defined in  Definitions \ref{CANONICALSECTION}-\ref{CANONFACTOR}  to produce canonical sections
   of $H^0(X, \lcal_{N + g})$.

\begin{maindefin} \label{SZETA} Assume that $\zeta = \zeta_1 + \cdots + \zeta_N \in
X^{(N)} \backslash X^{(N)}_{N + g}$.  Then we   define the section
$S_{\zeta_1, \dots, \zeta_N} \in H^0(X, \lcal_{N + g})$ by,

$$S_{\zeta_1, \dots, \zeta_N}(z) := \prod_{j =
1}^g {\bf 1}_{\ocal(P_j)}(z) \cdot \prod_{j = 1}^N {\bf
1}_{\ocal(\zeta_j)}(z), $$
\end{maindefin}


\subsection{\label{TILDEECALN} The vector bundle $\tilde{\ecal}_N$}

Let $\lcal_{N + g}$ be a line bundle of degree $N + g$, which as
always we take to be $\ocal((N + g) P_0)$.  We then define  a
vector bundle over $X^{(g)}$:

\begin{maindefin} \label{ETILDEDEF} $\tilde{\ecal}_N \to X^{(g)}$ is
the vector bundle with fiber,
\begin{equation} \tilde{\ecal}_{P_1 + \cdots + P_g} : =  H^0(X,
\lcal_{N + g} \ocal(-(P_1 + \cdots + P_g)). \end{equation}

We further define the extended configuration space
$$\begin{array}{lll}\tilde{X}^{(N)}: =  \PP \tilde{\ecal}_N & = & \{([s], P_1 + \cdots + P_g): \dcal(s) +
P_1 + \cdots + P_g = (N + g) P_0\} \\ && \\ & &  = \{(\zeta_1 +
\cdots + \zeta_N, P_1 + \cdots + P_g): \zeta_1 + \cdots + \zeta_N
+  P_1 + \cdots + P_g = (N + g) P_0\}.]\end{array}
$$

\end{maindefin}

Here as usual, equality means equality of divisor classes. There
is a natural map $\tilde{X}^{(N)} \to X^{(N)}$ which is an
analytic isomorphism away from $X^{(N)}_{N + g}$, i.e. on the
locus where there exists a unique $P_0 + \cdots + P_g$ so that $
\zeta_1 + \cdots + \zeta_N +  P_1 + \cdots + P_g = (N + g) P_0$.
In effect, we add extra points to the configuration space when the
representation is not unique. The purpose for doing this is to
analyze the behavior of volume forms along the bad set.

 There
exists a natural diagram of maps
\begin{equation}\label{DIAa}  \begin{array}{lll}  \PP \tilde{\ecal}_N = \tilde{X}^{(N)} & \stackrel{\dcal}{  \rightarrow}& \PP
\ecal_N = X^{(N)}
 \\ \rho \downarrow & & \downarrow \pi \\
X^{(g)} & \stackrel{\iota_{\lcal}}{  \rightarrow} & \mbox{Pic}^N
\simeq \mbox{Jac}(X),
\end{array}
\end{equation}
where the bottom arrow is the map $P_1 + \cdots + P_g \to \lcal
\otimes \ocal(-(P_1 + \cdots + P_g))$ and the top arrow is the
induced identification of  sections.  We need the diagram of
inverse maps and run into the usual Jacobi inversion problem, i.e.
that the the maps are not invertible along the
  the Wirtinger subvarieties. But they are analytic isomorphisms
  away from these subvarieties:

\begin{equation}\label{DIA}  \begin{array}{lll} X^{(N)} \backslash X^{(N)}_{N + g} \simeq \PP \ecal_N
\backslash W^N_{(N + g)P_0}  &
\stackrel{\iota_{\lcal}^*}{\rightarrow} & \PP \tilde{\ecal}_N
 \\ \pi \downarrow & & \downarrow \rho \\
\mbox{Pic}^N & \stackrel{\iota_{\lcal}}{  \leftarrow} & X^{(g)}
\end{array}
\end{equation}
The bottom map is singular (degenerate) along the hypersurface of
$W^1_g \subset X^{(g)}$ where $\dim H^0(X, \ocal(P_1 + \cdots +
P_g)) = 2$.  The image under $\pi$ of this set is the set of line
bundles $\xi \in \mbox{Pic}^N$ so that $\lcal_{N + g} \xi^{-1} \in
W^1_g$. So the singular sets of the maps are compatible with the
diagram.

A natural question is to compare $\PP H^0(X, \lcal_{N + g})$, $\PP
\ecal_N$ and
 and $\PP \tilde{\ecal}$ (\ref{TILDEECALDEF}).  As in (\ref{ISO2b}), there is a natural identification
\begin{equation} \tilde{\ecal}_{P_1 + \cdots + P_g} : = \{s \in H^0(X,
\lcal_{N + g}): \dcal(s) \geq P_1 + \cdots + P_g\}, \end{equation}
defined by multiplication by $\sigma_{\lcal}
 = \prod_{j = 1}^g {\bf 1}_{\ocal(P_j)}$. Since we understand the
 relation between $\PP
\ecal_N$ and
 and $\PP \tilde{\ecal}$, the main point is the following

\begin{mainprop} \label{CANONICAL} The fiberwise canonical map
$\sigma_{\lcal}: \PP \tilde{\ecal}_N \to \PP H^0(X, \lcal_{N +
g})$,
$$\sigma_{\lcal} (P_1 + \cdots + P_g, s) = \prod_{j = 1}^g {\bf 1}_{\ocal(P_j)} s, \;\;\;P_1
 + \cdots + P_g = \rho(s) $$   is an analytic branched  ${(N +
g) \choose N}$-fold cover
taking the fiber of $\PP \tilde{\ecal}_N $ over $P_1 + \cdots +
P_g \in X^{(g)}$   to the subspace
$$
\PP \{S \in H^0(X, \ocal(\lcal_{N + g}): \dcal(S) \geq P_1 +
\cdots + P_g \} \subset \PP H^0(X, \lcal_{N + g}).
$$
The branch locus $\bcal^{(N)}_{N + g} \subset \tilde{X}^{(N)}$
consists of $\{(\zeta_1+ \cdots + \zeta_N, P_1 + \cdots+ P_g)\}$
with  multiple points.
\end{mainprop}

\begin{proof}

 We first show that $D \sigma_{\lcal} $ is an isomorphism on
  the open dense subset $\tilde{X}^{(N)} \backslash \bcal^{(N)}_{N + g}$.    Since a  holomorphic map of complex manifolds of the same
dimension is surjective if its differential is surjective at one
point, this will prove that $\sigma_{\lcal} $ is surjective and
that it is   a branched covering map.

Let $s_t$ be a curve in $\tilde{\ecal}_N$, and let $
\sigma_{\lcal}(s_t)$ be the image curve in $H^0(X, \lcal_{N +
g})$. Then
$$D \sigma_{\lcal} (\dot{s}) = \frac{d}{dt} |_{t = 0} s(t)
\sigma_{\lcal}(t) = \dot{s}_0  \prod_{j = 1}^g {\bf
1}_{\ocal(P_j)} + s_0 \frac{d}{dt} |_{t = 0}  \prod_{j = 1}^g {\bf
1}_{\ocal(P_j)}. $$ If the left side is zero, then
$$\frac{\dot{s}}{s} = - \sum_{j = 1}^g  \frac{ \dot{{\bf
1}}_{\ocal(P_j)}}{ {\bf 1}_{\ocal(P_j)}}. $$ Since $s  = C
\prod_{j = 1}^N {\bf 1}_{\ocal(\zeta_j)}, $ this implies
$$ \sum_{k = 1}^N  \frac{ \dot{{\bf
1}}_{\ocal(\zeta_k)}}{ {\bf 1}_{\ocal(\zeta_k)}} + \sum_{j = 1}^g
\frac{ \dot{{\bf 1}}_{\ocal(P_j)}}{ {\bf 1}_{\ocal(P_j)}} = 0. $$
If the $\{\zeta_k, P_j\}$ are all distinct then this equation
cannot hold since the poles on the left side occur at some of the
$\{\zeta_k\}$ and on the right at some of the $\{P_j\}$. Hence $D
\sigma_{\lcal}$ is injective (and therefore an isomorphism) away
from the large diagonals.

Further, it  is  ${ N + g \choose N}$ to $1$ on the same set.
  Indeed, given $[S]
\in \PP H^0(X, \lcal_{N + g})$, we split up its $N + g$ zeros into
two groups, one of $N$ points $\zeta_1 + \cdots + \zeta_N$ and one
of $g$ points $P_1 + \cdots + P_g$. There does not appear to be
any preferred way to do this, so we consider all possible ways.
Since $s$ is a section, $\prod_{j = 1}^N \ocal(\zeta_j) \otimes
\prod_{j = 1}^g \ocal(P_j) = \lcal_{N + g}$,  it follows that
$A_{\lcal_{N + g}} (\zeta_1 + \cdots + \zeta_N) = P_1 + \cdots +
P_g.$ Hence $[S]$ is the image of the section $\frac{S}{ \prod_{j
= 1}^g {\bf 1}_{\ocal(P_j)}} $ under the canonical map. When the
points $\{\zeta_1, \dots, \zeta_N, P_1, \dots,P_g\}$  are
distinct, there are ${ N + g \choose N}$ to split up the zeros
into a subset of $N$ elements and a subset of $g$ elements. Hence
the line through $s$ is the image of ${ N + g \choose N}$
sections, and it is clear that these are the only ways that $S$ is
the image of an element. On the hypersurfaces where $\{\zeta_j,
P_k\}$ has multiple points, there are fewer ways to split up the
set of zeros; this is the branch locus.

\end{proof}

 Proposition \ref{CANONICAL} indicates why it is simpler to
 work with $\PP \tilde{\ecal}_N$ than $\PP \ecal_N$. In the latter
 case we would need to puncture out $X^{(N)}_{N + g}$ in order to
 defined the map to $\PP H^0(X, \lcal_{N + g})$, since $P_1 +
 \cdots + P_g$ would not be uniquely defined on the Wirtinger
 subvariety. In the case of $\PP \tilde{\ecal}$, the the line
 bundle is duplicated at points $P_1 + \cdots + P_g, P_1' + \cdots
 + P_g'$ so that $\lcal_{N + g} \otimes \ocal(- (P_1 + \cdots +
 P_g)) \simeq \lcal_{N + g} \otimes \ocal(- (P_1' + \cdots +
 P_g'))$. This redundancy makes it possible to define the map to
 $\PP \tilde{\ecal}$ at all points of $\PP \tilde{\ecal}_N$.

%

\subsection{\label{ADMHERM} $\omega$-admissible family of Hermitian metrics on $\tilde{\ecal}_N$}

Having reviewed the relevant Abel-Jacobi theory, we now return to
the question of defining   Hermitian metrics on the vector bundle.
$\tilde{\ecal}_N$, i.e. a smooth family of Hermitian inner
products on the spaces $H^0(X, \xi)$ for $\xi \in \mbox{Pic}^N$.
As mentioned in the introduction, we define them by fixing a
Hermitian metric $h_0$ on $\ocal(P_0)$ and the associated
Hermitian metrics $h_0^{N + g}$ on $\ocal((N + g) P_0) = \lcal_{N
+ g}$ with Chern form $\omega$. Together with the Bernstein-Markov
measure $\nu$ we obtain an inner product $G_{N + g}(h, \nu)$ as in
(\ref{DEFGN}). We then define inner products $G_N(h, \nu, L)$ on
$H^0(X, L)$ by specifying that the  maps $\sigma_{\lcal}$ are
isometric. That is, we restrict $G_{N + g}(h, \nu)$ to each
embedded subspace and thus induce an inner product on each $H^0(X,
L)$. We refer to the family of such inner products as  the
$\omega$- {\it admissible Hermitian metrics}. The Gaussian
measures induced Fubini-Study measures on the associated
projective spaces of sections.

\subsection{\label{OPT} $\omega_0$-admissible metrics and admissible Hermitian
inner products}

As mentioned in the introduction, there is  another natural way to
define a family of $\omega_0$-admissible Hermitian inner products
on the line bundles $\xi \in \mbox{Pic}^N(X)$ and an associated
family of $\omega_0$-admissible Hermitian inner products on the
spaces $H^0(X, \xi)$. Namely, we equip the line bundles $\xi$ with
$\omega_0$ admissible metrics and then use (\ref{DEFGN}) to define
associated Hermitian inner products.

This approach involves the complication that the admissible line
bundle metrics are only unique up to a constant, and therefore the
family of metrics as $\xi$ ranges over $\mbox{Pic}^N$ is only
unique up to a function on $\mbox{Pic}^N$. This constant can be
fixed up to an overall constant $C_N$ by using the Faltings metric
on the associated determinant line bundle of $\ecal_N \to
\mbox{Pic}^N$, i.e. a Hermitian metric on $\bigwedge^{\mbox{top}}
\ecal_N$.

A further complication is that the Hermitian inner products on
$H^0(X, \xi)$ differ from the inner products defined by the
canonical embedding by the factor of $\sigma_{\lcal} = \prod_{j =
1}^g {\bf 1}_{\ocal(P_j)}(z)$. That is, the canonical embeddings
would not be isometric if we used $\omega_0$-admissible metrics to
define admissible Hermitian inner products on $H^0(X, \xi)$. To
deal with this complication, one would need to express to directly
evaluate the Fubini-Study-Haar ensemble with these inner products
in terms of zeros coordinates.

Although the approach in terms of $\omega_0$-admissible Hermitian
metrics and inner products seems very natural and attractive, we
opt for the large vector space (and projective linear ensemble)
for simplicity of exposition.

\section{\label{JPCCALCULATION}  Proof of  Theorem \ref{JPDHG} (I)}

In this section, we prove the first formula for the JPC for the
projective linear ensemble in terms of the prime form and Bergman
kernel determinant. Starting from the  general formula for the
Fubini-Study volume form with respect to an inner product on the
large projective space $\PP H^0(X, \lcal_N)$ (\S \ref{FSVOL}), we
pull back this volume form by a generalized Newton-Vieta map from
zeros to sections (\S \ref{VDSECT}) to obtain a volume form on
$X^{(N)}$. We express it in the natural configuration space
coordinates (zeros coordinates)  as the quotient of a
`Vandermonde' and an $L^2$ factor. In Proposition
\ref{BASICFORMULA}, we express the Vandermonde determinant
 in terms of the prime form. Finally we write the denominator in terms of the prime form
to complete the proof.

We apply  formula (\ref{FSAFFINE}) to the projective space $\PP
H^0(X, \lcal_{N + g})$, where as above $\lcal_{N + g}$ is a line
bundle of degree $N + g$, endowed with the inner product
(\ref{DEFGN}) with $h$ an admissible metric. To obtain an
identification with $\CP^{d_N}$, we need introduce a basis of
$H^0(X, \lcal_{N +g})$.

\subsection{\label{VIETASECT} Coordinates relative to a basis of $H^0(X, \lcal_{N + g})$}

We now define a  special orthonormal basis of $H^0(X, \lcal_{N +
g})$ and (at the same time)   an affine chart $\C^{N + g - 1}$ for
$\PP H^0(X, \lcal_{N + g})$. We follow an analogy with the genus
zero case, and we begin by explaining that case in a form suitable
for generalization. When  $X = \CP^1$, we often use the basis
$\{z^j\}$ on the right side of (\ref{VIETA}).  This basis of
polynomials represents local holomorphic coefficients relative to
the frame $e^N(z) $ of $\ocal(N) \to \CP^1$ corresponding to the
homogeneous polynomial $z_0^N$ in coordinates $(z_0, z_1)$ on
$\C^2$. In these coordinates $z = \frac{z_1}{z_0}$ and $z^j e^N =
z_1^j z_0^{N - j}$.  The section $z_1^N$ is distinguished in this
basis because the coefficient of $z^N$ of the product $\prod_{j =
1}^N (z - \zeta_j) = z^N + \cdots$ always equals one. Hence an
affine chart for $\PP H^0(X, \ocal(N))$ is given by  the affine
space of monic polynomials of this form; the associated  affine
coordinates $w_j$ are the coefficients relative to $1, \dots,
z^{N-1}$.

The element $z^N$ has a natural generalization to the line bundle
$\lcal_N$:  Namely, it is the coherent state
$\Phi_{h_{FS}^N}^{\infty}$ for the Fubini-Study inner product
centered at the point $\infty \in \CP^1$ (see \S
\ref{COHERENTSTATE}). Indeed, for any holomorphic section
(polynomial) $s$, $\langle s, \Phi_{h_{FS}^N}^{\infty} \rangle =
s(\infty)$, while $\langle s, z^N \rangle = a_N$, where $s =
\sum_{j = 0}^N a_j z^j$. So we need to see that $a_N = s(\infty)$.
But in homogeneous coordinates, $z_0 = 0$ defines $\infty$, so
all monomials $z^j e^N = z_1^j z_0^{N - j}$ with $j \not= N$
vanish at $\infty$.

 Given the inner product $G_N(h, \nu)$ on $H^0(X, \lcal_{N
+g})$, the  \szego projector $\Pi_{\lcal_{N + g}}$ is defined to
be the orthogonal projection from all $L^2$ sections of $\lcal_{N
+g}$ onto the space $H^0(X, \lcal_{N +_g})$ with respect to
$G_N(h, \nu)$. For simplicity, we do not include the data $(h,
\nu)$ in the notation for $\Pi_{\lcal_{N +g}}$. Given a point $P
\in X$, the associated coherent state is defined as in \S
\ref{COHERENTSTATE}, i.e.
\begin{equation} \label{CSb} \Phi_{N
+g}^{P}: =
  \Pi_{\lcal_{N
+g}}(\cdot, P). \end{equation}
  Its important property is that
  \begin{equation} s(P) = \langle s, \Phi_{N
+g}^{P} \rangle_{G_N(h,
  \nu)}, \;\;\; s \in H^0(X, \lcal_{N
+g}). \end{equation} Strictly speaking,  $\Phi^P(z)$ is a section
with values in $\lcal_P$, and we need to tensor with a covector in
$\lcal_P^*$ to cancel this factor.

 To generalize $z^N$ we therefore pick a base point $P_0$ and
use the element $$\hat{\psi}_0 : = \frac{\Phi_{N +
g}^{P_0}}{||\Phi_{N + g}^{P_0}||}$$ as a distinguished basis
element of $H^0(X, \lcal_{N + g})$.  We also use it as a local
frame for $\lcal_{N + g} \to X$. We then pick a $G_{N + g}(\omega,
\nu)$-orthonormal basis $\{\hat{\psi}_j\} $ for $H^0_{P_0}(X,
\lcal_{N + g}) \simeq H^0(X, E_{N + g - 1})$, and write them
locally as $\hat{\psi}_j = \psi_j \Phi_{N + g}^{P_0}\}$,  where
$\psi_j$ are local coefficient functions in the frame (An
advantage of working with the large vector space is that we can
fix a convenient basis for it.) We then have the orthogonal
decomposition,
\begin{equation} H^0(X,
\lcal_{N + g}) = H^0_{P_0} (X, \lcal_{N + g}) \oplus \C \Phi_{N
+g}^{P_0}, \;\;\; \mbox{where} \;\; H^0_{P_0} (X, \lcal_{N + g}) =
\{s: s(P_0) = 0\}.
\end{equation}
We further define the auxiliary line bundle  \begin{equation}
\label{EDEF} E_{N + g - 1} = \lcal_{N + g} \otimes \ocal(- P_0)
\end{equation} and define the isomorphism
\begin{equation} \otimes {\bf 1}_{P_0} : H^0(X, E_{N + g -1}) \to
H^0_{P_0}(X, \lcal_{N + g}). \end{equation} Equipped with the
admissible metrics, this isomorphism is an isometry. We only
introduce $E_{N + g -1}$ to quote relevant facts from Abel-Jacobi
theory from the literature, and to be able to speak of Bergman
kernels rather than the conditional Bergman kernels  for
$H^0_{P_0}(X, \lcal_{N + g})$; the latter are the principal
objects.

We let $Z_j$ denote coordinates with respect to
$\{\hat{\psi}_j\}$. We view $H^0_{P_0} = \{Z: Z_0 = 0\}$ as the
`hyperplane at infinity' and define  the affine coordinates $w_j$
on $\PP H^0(X, \lcal_{N + g}) \backslash H^0_{P_0}$ by $w_j =
\frac{Z_j}{Z_0}$. Thus, the projective coordinates are ratios of
the coefficient functions $f_j$ of the section in the frame.

\begin{maindefin}
 We
then put
\begin{equation} \label{ECALNDEF} \ecal_{N + 1 - j}(\zeta_1, \dots, \zeta_N) = w_j (S_{\zeta}),
\;\; (j = 0, \dots, N). \end{equation} Thus, \begin{equation}
\label{SZETA1} \tilde{S}_{\zeta_1, \dots, \zeta_N} : = ||\Phi_{N +
g}^{P_0}|| \frac{S_{\zeta_1, \dots, \zeta_N}}{\langle S_{\zeta_1,
\dots, \zeta_N}, \Phi_{N + g}^{P_0} \rangle }: = \cdot \sum_{j =
0}^{N } \ecal_{N - j}(\zeta_1, \dots, \zeta_N) \psi_j(z).
\end{equation}
\end{maindefin}

 By definition,
\begin{equation} \label{ECAL01} \ecal_0(\zeta_1, \dots, \zeta_N) =1, \end{equation}
generalizing the affine space of monic polynomials in the basis
$\{z^j\}$. In that case, $\langle S_{\zeta_1, \dots, \zeta_N},
z^{N } \rangle = 1$.

Since $S_{\zeta_1, \dots, \zeta_N}$ is not well-defined if
$\zeta_1 + \cdots + \zeta_N \in X^{(N)}_{N + g}$, we regard it as
defined only on the complement. Alternatively, it is a
well-defined map from extended configuration space
$\tilde{X}^{(N)}$.

\subsection{Pull back of the Fubini-Study volume form on $\PP H^0(X, \lcal_{N +
g})$}

The first step in the proof of Theorem \ref{JPDHG} (I) is the
following  preliminary version of the formula for the JPC $\vec
K^N_{PL}$ of the projective linear ensemble:

\begin{mainprop} \label{VOLAd} The pullback to $X^{(N)} \backslash X^{(N)}_{N + g}$ under $\psi_{\lcal}$
(or to $\tilde{X}^{(N)}$ under $\sigma_{\lcal}$) of the
Fubini-Study volume form on $\PP H^0(X, \lcal_{N + g})$ with
respect to the inner product $G_{N + g}(\omega, \nu)$ is given by,

\begin{eqnarray}
 \vec K^N_{PL} = ||\Phi_{N + g}^{P_0}||^{- 2(N + 1)} \left|\langle
S_{\zeta_1, \dots, \zeta_N}, \Phi_{N + g}^{P_0} \rangle \right|^{2
N + 2 } \;\; \frac{\prod_{j = 1}^{N } d\ecal_j \wedge d
\bar{\ecal}_j}{||S_{\zeta_1, \dots, \zeta_N}||_{L^2(G_{N +
g}(\omega, \nu))}^{2(N + 1)}}.
 \end{eqnarray}
 \end{mainprop}

 \begin{proof}

We  evaluate the Fubini-Study volume form (\ref{VOLAc}) on $\PP
H^0(X, \lcal_{N + g})$ with respect to the inner product $G_{N +
g}(\omega, \nu)$. The homogeneous coordinates $w_j$  on the chart
where $Z_0 = \langle S_{\zeta_1, \dots, \zeta_N}, \Phi_{N +
g}^{P_0} \rangle  \not= 0$ are defined in (\ref{ECALNDEF}), or
equivalently, $Z_j  = \langle S_{\zeta_1, \dots, \zeta_N},
\frac{\Phi_{N + g}^{P_0}}{||\Phi_{N + g}^{P_0}|| }\rangle  \not
\ecal_{N + 1 - j}(\zeta_1, \dots, \zeta_N)$, and
$$S_{\zeta_1, \dots, \zeta_N} = \prod_{j =
1}^g {\bf 1}_{\ocal(P_j)}(z) \cdot \prod_{j = 1}^N {\bf
1}_{\ocal(\zeta_j)}(z), = \sum_{k=0}^N Z_k \psi_k. $$ We then pull
back the form (\ref{VOLAc}) under the section $(w, 1) = ||\Phi_{N
+ g}^{P_0}||^2  \frac{S_{\zeta_1, \dots, \zeta_N}}{\langle
S_{\zeta_1, \dots, \zeta_N}, \Phi_{N + g}^{P_0} \rangle }$. Since
we have picked an orthonormal basis for $H^0(X, \lcal_{N + g})$,
the matrix $A$ in (\ref{VOLAc}) is the identity matrix.  The
formula for the denominator follows from
$$(1 +
||W||^2)^{N + 1} = ||\Phi_{N + g}^{P_0}||^{2(N + 1)} ||\langle
S_{\zeta_1, \dots, \zeta_N}, \Phi_{N + g}^{P_0} \rangle ||^{- 2 (N
+ 1)}\;\; ||S_{\zeta}||^{2 (N + 1)}. $$

\end{proof}


\subsection{\label{VDSECT} Vandermonde in higher genus}

The next step is to simplify the form in the numerator in
Proposition \ref{VOLAd}.  By the higher genus Vandermonde
determinant we mean Jacobian determinant $ \det
\begin{pmatrix} \frac{\partial \ecal_n}{\partial \zeta_j}
\end{pmatrix} $  defined by
\begin{equation} \label{DECAL}   \; d \ecal_1 \ \wedge
\cdots \wedge d \ecal_N  = \det
\begin{pmatrix} \frac{\partial \ecal_n}{\partial \zeta_j}
\end{pmatrix} \prod_{j = 1}^N d \zeta_j.
\end{equation}
Here, we assume that $\zeta_1 \cdots + \zeta_N$ does not lie in
the branch locus of $p_N: X^N \to X^{(N)}$ so that we can use
$\zeta_j$ as local coordinates (see \S \ref{XN}). That is, we fix
a trivializing chart $U$ for $\ocal(P_0)$ centered at $P_0$, and a
trivializing frame $e$ for $\ocal(P_0)$. We let $\zeta$ denote a
local holomorphic coordinate in $U$ which vanishes at $P_0$ and we
denote the associated coordinates on $U^{(N)}$ by $\{\zeta_1,
\dots, \zeta_N\}$.  We express each section of $\lcal_{N + g}$ as
a local holomorphic function times this frame. For simplicity, we
use the same notation for sections and their local holomorphic
functions relative to this frame. We now prove the formula alluded
to in (\ref{DELTAG}).

\begin{mainprop} \label{BASICFORMULA} Let $\ecal_{N - j}(\zeta_1, \dots, \zeta_N)$ be defined by
(\ref{SZETA1}). Then $$\det \left( \begin{pmatrix} \frac{\partial
\ecal_{N - j}}{\partial \zeta_k}
\end{pmatrix}_{j, k = 1}^{N } \right)   = || \Phi_{N + g}^{P_0}||^{ N} \frac{\prod_{k = 1}^N
\prod_{j = 1}^g {\bf 1}_{\ocal(P_j)}(\zeta_k) }{\langle
S_{\zeta_1, \dots, \zeta_N}, \Phi_{N + g}^{P_0} \rangle^N \; \det
\begin{pmatrix} \psi_n(\zeta_j) \end{pmatrix}} \;
\left(\prod_{k = 1}^N \prod_{j: k \not= j} {\bf
1}_{\ocal(\zeta_j)} (\zeta_k)\right).
$$
 \end{mainprop}

The determinant omits $j = 0$, in which case  $\ecal_N \equiv 1$
and the derivatives vanish.  We also observe that when taking the
norm square of this expression, the factor $\left|\langle
S_{\zeta_1, \dots, \zeta_N}, \Phi_{N + g}^{P_0} \rangle \right|^{2
N +2}$ in Lemma \ref{VOLAd} cancels all but two powers in  the
norm-square of $\langle S_{\zeta_1, \dots, \zeta_N}, \Phi_{N +
g}^{P_0} \rangle^N $ in the denominator of (\ref{BASICFORMULA}).

\begin{rem}
The Slater determinant  $\det
\begin{pmatrix} \psi_n(\zeta_j) \end{pmatrix}$  is a section of the highest power of the exterior
tensor product
$$\pi_1^* \lcal_{N + g }  \boxtimes \cdots \boxtimes \pi_N^*
\lcal_{N + g }  \to X^N$$ where $\pi_j : X^N \to X$ is the
projection to the $j$th factor. On the other hand,
$$\left( \prod_{j = 1}^g {\bf
1}_{\ocal(P_j)}(\zeta_k)   \cdot \prod_{j: k \not= j} {\bf
1}_{\ocal(\zeta_j)} (\zeta_k)\right) d \zeta_k = \pi_k^*
\partial  \left( \prod_{j = 1}^g {\bf
1}_{\ocal(P_j)}   \cdot \prod_{j= 1}^N {\bf 1}_{\ocal(\zeta_j)}
\right) (\zeta_k) \in \pi_k^* \lcal_{N + g} \otimes K_X. $$ Taking
the exterior tensor product $\prod_{k = 1}^N$ of these $(1,0)$
forms and taking the ratio with the Slater determinant produces a
well-defined $(N, 0)$ form on $X^{(N)}$, i.e. a section of
$\pi_1^* K_X  \boxtimes \cdots \boxtimes \pi_N^* K_X \to X^N$.
Thus, as mentioned after the statement of Theorem \ref{JPDHG}, the
ratio is well defined without the choice of a Hermitian metric.
Note also that we only obtain special sections since  $\psi_j \in
H_{P_0}^0(X, \lcal_{N + g})$.

\end{rem}

 The proof of Proposition \ref{BASICFORMULA} consists of
two Lemmas.

\begin{lem} \label{SLATER1} We have the following identity on determinants of $N \times N$ matrices,
 $$\det \begin{pmatrix}\sum_{n = 1}^N \frac{\partial
\ecal_{N - n} }{\partial \zeta_1} \psi_n(\zeta_1), & &  \sum_{n =
1}^N \frac{\partial
\ecal_{N - n}}{\partial \zeta_1} \psi_n(\zeta_N)\\  && \\
\sum_{n = 1}^N \frac{\partial \ecal_{N - n}}{\partial \zeta_N}
\psi_n(\zeta_1) && \sum_{n = 1}^N \frac{\partial \ecal_{N -
n}}{\partial \zeta_N} \psi_n(\zeta_N)
\end{pmatrix} = \det \begin{pmatrix} \frac{\partial
\ecal_{N - n}}{\partial \zeta_j} \end{pmatrix}_{j,n = 1}^N \det
\begin{pmatrix} \psi_n(\zeta_j) \end{pmatrix}_{j,n = 1}^N
$$
\end{lem}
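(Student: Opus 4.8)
The plan is to recognize the $N\times N$ matrix on the left-hand side as an honest product of two $N\times N$ matrices and then to invoke the multiplicativity of the determinant. First I would introduce the Jacobian (higher-genus Vandermonde) matrix $A = (A_{jn})_{j,n=1}^N$ with entries $A_{jn} = \frac{\partial \ecal_{N-n}}{\partial \zeta_j}$, and the Slater matrix $B = (B_{nk})_{n,k=1}^N$ with entries $B_{nk} = \psi_n(\zeta_k)$. Both are genuinely square $N\times N$ matrices: the index $n$ runs over $1,\dots,N$, and $\{\psi_n\}_{n=1}^N$ are the $N$ local coefficient functions of the orthonormal basis $\{\hat\psi_n\}$ of $H^0_{P_0}(X,\lcal_{N+g})$ introduced in \S\ref{VIETASECT}.

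The key observation is that the $(j,k)$ entry of the product $AB$ is exactly
$$
(AB)_{jk} \;=\; \sum_{n=1}^N A_{jn}B_{nk} \;=\; \sum_{n=1}^N \frac{\partial \ecal_{N-n}}{\partial \zeta_j}\,\psi_n(\zeta_k),
$$
which is precisely the $(j,k)$ entry of the matrix appearing on the left-hand side of the asserted identity. Thus that matrix is literally $AB$, and the multiplicativity of the determinant gives
$$
\det(AB) \;=\; \det A \cdot \det B \;=\; \det\!\begin{pmatrix}\frac{\partial \ecal_{N-n}}{\partial \zeta_j}\end{pmatrix}_{j,n=1}^N \; \det\!\begin{pmatrix}\psi_n(\zeta_k)\end{pmatrix}_{n,k=1}^N.
$$
Since a determinant is unchanged under transposition, the second factor equals $\det(\psi_n(\zeta_j))_{j,n=1}^N$, which matches the right-hand side exactly and completes the identity.

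There is essentially no analytic content and no serious obstacle here: the statement is a purely algebraic consequence of $\det(MN) = \det M\,\det N$. The only point that demands care is the index bookkeeping — checking that in $(AB)_{jk}$ the outer row index $j$ is the row index of $A$, the outer column index $k$ is the column index of $B$, and the summation variable $n$ is the single contracted index — together with confirming that the range $n=1,\dots,N$ makes both factors square (the term $n=0$ is correctly omitted, since there $\ecal_N\equiv 1$ and its derivatives vanish, as noted after Proposition \ref{BASICFORMULA}), so that the ordinary product rule applies rather than a rectangular Cauchy--Binet sum.
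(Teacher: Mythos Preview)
Your proof is correct and is essentially the same argument as the paper's: the paper phrases it by writing each row of the left-hand matrix as a linear combination $\sum_n \frac{\partial \ecal_{N-n}}{\partial \zeta_j}\,\Psi_n(\zeta)$ of the row vectors $\Psi_n = (\psi_n(\zeta_1),\dots,\psi_n(\zeta_N))$ and then expanding the wedge product of the rows by multilinearity, which is exactly the computation underlying $\det(AB)=\det A\,\det B$. Your matrix-product formulation is a cleaner packaging of the identical content.
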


As above, the sums omit $n = 0$ since $\ecal_0 = 1$.

\begin{proof} For each $n$ we consider the row vector $\Psi_n(\zeta) : = \left[
\psi_n(\zeta_1), \dots, \psi_n(\zeta_N) \right]$. Then the $j$th
row of our matrix is $\sum_{n = 1}^N \frac{\partial \ecal_{N -
n}}{\partial \zeta_j} \Psi_n(\zeta)$, so we are calculating
$$\sum_{n = 1}^N \frac{\partial \ecal_{N - n}}{\partial
\zeta_1} \Psi_n(\zeta) \bigwedge \cdots \bigwedge \sum_n
\frac{\partial \ecal_{N - n}}{\partial \zeta_N} \Psi_n(\zeta). $$
Clearly this gives the  sum
$$\left(\sum_{\sigma \in \Sigma_N} \epsilon(\sigma)
\frac{\partial \ecal_1}{\partial \zeta_{\sigma(1)}} \cdots
\frac{\partial \ecal_N}{\partial \zeta_{\sigma(N )}} \right)
\Psi_1(\zeta) \wedge \Psi_2(\zeta) \wedge \cdots \wedge
\Psi_N(\zeta) $$ stated in the Proposition.

\end{proof}

We now calculate the left side in Lemma \ref{SLATER1} in a
different way:

\begin{lem}\label{SLATER2G}  With the above notation and
conventions, we have
 $$\begin{array}{l} \det \begin{pmatrix}\sum_{n = 1}^N \frac{\partial
\ecal_{N - n} }{\partial \zeta_1} \psi_n(\zeta_1), & &  \sum_{n =
1}^N \frac{\partial
\ecal_{N - n}}{\partial \zeta_1} \psi_n(\zeta_N)\\  && \\
\sum_{n = 1}^N \frac{\partial \ecal_{N - n}}{\partial \zeta_N}
\psi_n(\zeta_1) && \sum_{n = 1}^N \frac{\partial \ecal_{N -
n}}{\partial \zeta_N} \psi_n(\zeta_N)
\end{pmatrix}\\ \\= || \Phi_{h^N}^{P_0}||^{N} \langle S_{\zeta_1, \dots,
\zeta_N}, \Phi_{h^N}^{P_0} \rangle^{- N }\; \prod_{k = 1}^N
\left(\prod_{j = 1}^g {\bf 1}_{\ocal(P_j)} (\zeta_k) \cdot
\prod_{j: k \not= j} {\bf 1}_{\ocal(\zeta_j)}(\zeta_k)\right).
\end{array}$$
\end{lem}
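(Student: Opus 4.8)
The plan is to recognize the matrix in the statement as the evaluation at the points $\zeta_k$ of the $\zeta_j$-derivatives of the single section $\tilde S_{\zeta_1,\dots,\zeta_N}$, and then to exploit the fact that this section vanishes at each $\zeta_k$. Since the basis $\{\psi_n\}$ is fixed (independent of $\zeta$) and $\tilde S_{\zeta}(z)=\sum_{n=0}^N \ecal_{N-n}(\zeta)\,\psi_n(z)$ by (\ref{SZETA1}), with the $n=0$ coefficient $\ecal_N\equiv 1$ constant, I have
$$\sum_{n=1}^{N}\frac{\partial \ecal_{N-n}}{\partial\zeta_j}\,\psi_n(\zeta_k)
=\frac{\partial}{\partial\zeta_j}\Big[\tilde S_{\zeta}(z)\Big]\Big|_{z=\zeta_k},$$
where $\partial/\partial\zeta_j$ differentiates the coefficients only. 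Thus the matrix whose determinant I must compute is $M_{jk}=\partial_{\zeta_j}\tilde S_\zeta(z)\big|_{z=\zeta_k}$, and the task is to show it is diagonal with the stated entries.

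Next I would write $\tilde S_\zeta = C(\zeta)\,S_\zeta$ with the scalar $C(\zeta)=\|\Phi_{N+g}^{P_0}\|/\langle S_\zeta,\Phi_{N+g}^{P_0}\rangle$ (here $\Phi_{N+g}^{P_0}=\Phi_{h^N}^{P_0}$), so that, as local functions, $S_\zeta(z)=\prod_{l=1}^g E(z,P_l)\,\prod_{m=1}^N E(z,\zeta_m)$ using ${\bf 1}_{\ocal(P)}(z)=E(z,P)$. The crucial point is that $S_\zeta(\zeta_k)=0$, because the factor $E(\zeta_k,\zeta_k)=0$; hence in $\partial_{\zeta_j}(C\,S_\zeta)=(\partial_{\zeta_j}C)\,S_\zeta+C\,\partial_{\zeta_j}S_\zeta$ the first term dies upon evaluation at $z=\zeta_k$, leaving $M_{jk}=C(\zeta)\,\partial_{\zeta_j}S_\zeta(z)\big|_{z=\zeta_k}$. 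Expanding $\partial_{\zeta_j}S_\zeta$ by the product rule and evaluating at $z=\zeta_k$, every summand in which the factor $E(z,\zeta_k)$ is left undifferentiated vanishes at $z=\zeta_k$; the only surviving summand differentiates exactly that factor, which (for distinct points, away from the diagonals) forces $j=k$. Consequently $M_{jk}=0$ for $j\neq k$.

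The step I expect to be the main obstacle is controlling the \emph{implicit} dependence of $P_1,\dots,P_g$ on $\zeta$ through the Abel--Jacobi map $A_{\lcal_{N+g}}$: a priori the chain-rule terms $\partial_{\zeta_j}P_l\neq 0$ contribute to $\partial_{\zeta_j}S_\zeta$. The resolution is that every such contribution carries the full undifferentiated product $\prod_m E(z,\zeta_m)$, which contains the factor $E(\zeta_k,\zeta_k)=0$; hence the Abel--Jacobi derivatives drop out entirely, and only the \emph{values} $E(\zeta_k,P_l)={\bf 1}_{\ocal(P_l)}(\zeta_k)$ survive. Computing the diagonal entry then gives
$$M_{kk}=C(\zeta)\left(\partial_w E(z,w)\right)\Big|_{w=z=\zeta_k}\,\prod_{l=1}^{g} E(\zeta_k,P_l)\,\prod_{m\neq k}E(\zeta_k,\zeta_m).$$
By the normalization of the prime form in the chosen uniformizing coordinate (the local representative of ${\bf 1}_{\ocal(\zeta_k)}(z)$ is $z_\alpha(\zeta_k)-z_\alpha(z)$, cf. \S\ref{POINT}), the bracketed diagonal derivative equals $1$. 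Taking the product over $k$, using that $M$ is diagonal so that $\det M=\prod_{k=1}^N M_{kk}$, and $C(\zeta)^N=\|\Phi_{N+g}^{P_0}\|^{N}\langle S_\zeta,\Phi_{N+g}^{P_0}\rangle^{-N}$, yields exactly the claimed formula.
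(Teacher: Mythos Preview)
Your argument is correct and follows essentially the same route as the paper: identify the $(j,k)$ entry as $\partial_{\zeta_j}\tilde S_\zeta(z)\big|_{z=\zeta_k}$, observe that differentiating either the scalar $C(\zeta)$ or the $P_l$-factors leaves the undifferentiated product $\prod_m E(z,\zeta_m)$ which vanishes at $z=\zeta_k$, and conclude the matrix is diagonal with the stated entries. Your explicit treatment of the implicit Abel--Jacobi dependence of the $P_l$ on $\zeta$ is a little more careful than the paper's one-line dismissal of that term, but the substance is identical.
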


\begin{proof} By definition of $S_{\zeta_1, \dots, \zeta_N}$ (Definition \ref{SZETA}) and by (\ref{SZETA1}),  we have
$$|| \Phi_{h^N}^{P_0}|| \langle S_{\zeta_1, \dots,
\zeta_N}, \Phi_{h^N}^{P_0} \rangle^{-1} \prod_{j = 1}^g  {\bf
1}_{\ocal(P_j)} (\zeta_k) \cdot \prod_{j = 1}^N {\bf
1}_{\ocal(\zeta_j)}(z)= \sum_{j = 0}^{N } \ecal_{N  - j}(\zeta_1,
\dots, \zeta_r) \psi_j(z).
$$

Recall that we are working in the chart where $\langle S_{\zeta_1,
\dots, \zeta_N}, \Phi_{h^N}^{P_0} \rangle \not= 0$. If we
differentiate this factor or $\prod_{j = 1}^g {\bf
1}_{\ocal(P_j)}$ and set $z = \zeta_k$, the second factor
vanishes. So we only need to differentiate the factor $\prod_{j =
1}^N {\bf 1}_{\ocal(\zeta_j)}(z)$ and multiply it by the other two
factors with $z = \zeta_k$.

The factor ${\bf 1}_{\ocal(\zeta_j)} (\zeta_k)$ vanishes if $j =
k$, so differentiation in $\zeta_n$ (in the local coordinate) must
remove this factor to get a non-zero result on the left side  when
we set
 $z = \zeta_k$. Using that ${\bf 1}_P(Q) \sim P - Q$ near the
 diagonal, we get
$$\begin{array}{lll}  \frac{\partial}{\partial \zeta_n}
  \cdot \prod_{j = 1}^N {\bf 1}_{\ocal(\zeta_j)}(z)
|_{z = \zeta_k} & = &      \cdot \prod_{j \not= n} {\bf
1}_{\ocal(\zeta_j)}(\zeta_n)
 \\&& \\
&= & \delta_{n k} \;   \prod_{j \not= n} {\bf
1}_{\ocal(\zeta_j)}(\zeta_k).
\end{array} $$

 Here we differentiate the local expression for ${\bf 1}_{\ocal(P)}(z)$  as if it were
 a function rather than a section. To be more precise, we implicitly use the Chern  connection for
 the admissible metric on  $\ocal(1)$. But the connection term
 vanishes when we evaluate at $z = \zeta_k$,  so the covariant derivative produces
 the same result as the local derivative.

 It follows that
$$\det \begin{pmatrix} \frac{\partial}{\partial \zeta_j} S_{\zeta_1, \dots, \zeta_N}(z) |_{z = \zeta_k} \end{pmatrix}
=\; \prod_{k = 1}^N \prod_{j = 1}^g {\bf 1}_{\ocal(P_j)}(\zeta_k)
\left(  \prod_{j: j \not= k} {\bf
1}_{\ocal(\zeta_j)}(\zeta_k)\right). $$

Multiplying by the other factors completes the proof   of the
Lemma.
\end{proof}

Combining the two Lemmas we obtain Proposition \ref{BASICFORMULA}.

\begin{rem}

To put these calculations into perspective, we recall that in
genus zero, with $\psi_j(z) = z^j$ for $j = 0, \dots, N - 1$,  the
Vandermonde determinant arises  in the determinant formula:
 \begin{equation} \label{g0}  \det \begin{pmatrix}\sum_{n = 1}^N \frac{\partial
\ecal_{N - n} }{\partial \zeta_1} \psi_n(\zeta_1), & &  \sum_{n =
1}^N \frac{\partial
\ecal_{N - n}}{\partial \zeta_1} \psi_n(\zeta_N)\\  && \\
\sum_{n = 1}^N \frac{\partial \ecal_{N - n}}{\partial \zeta_N}
\psi_n(\zeta_1) && \sum_{n = 1}^N \frac{\partial \ecal_{N -
n}}{\partial \zeta_N} \psi_n(\zeta_N)
\end{pmatrix}  =
 \prod_{j: k \not= j} (\zeta_j - \zeta_k) = \Delta^2(\zeta).
 \end{equation}
The proof is that
$$\begin{array}{lll} \frac{\partial}{\partial \zeta_j}  \prod_{n = 1}^N (z - \zeta_j)|_{z = \zeta_k}
& = &  \prod_{n \not= j} (\zeta_n - \zeta_k) \\ && \\
& = & \delta_{j k}   \prod_{n \not= k} (\zeta_n - \zeta_k).
\end{array} $$
 hence
$$\det \begin{pmatrix} \frac{\partial}{\partial \zeta_j} \prod_{j = 1}^N (z - \zeta_j) |_{z = \zeta_k} \end{pmatrix} =
\;  \prod_{j: k \not= j} (\zeta_j - \zeta_k). $$ Further, the
Slater determinant $\det (\psi_j(\zeta_k)) = \Delta(\zeta)$. So
the two factors in Proposition \ref{BASICFORMULA} cancel to leave
$\Delta(\zeta)$.

\end{rem}

\subsection{\label{Slatersection} Slater determinant and Bergman determinant}

To complete the proof of Theorem \ref{JPDHG} (I), we relate the
Slater determinants $ \det
\begin{pmatrix} \psi_n(\zeta_j)
\end{pmatrix}_{j,n = 1}^N$ in Lemma \ref{SLATER1} to Bergman
determinants. The following Lemma is a general fact for any
Hilbert space (it also applies to $H^0_{P_0}(X, \lcal_{N + g})$).

 \begin{lem}\label{SLATERa} Let $E$ be a line bundle of degree $n + g - 1\geq 2g - 1$, and let  $G$ be an inner product on $H^0(X,
 E)$. Let $\{\psi_j\} = \{f_j e_E\}_{j = 1}^{n }$ be a basis for $H^0(X, E)$ Then
 any $(\zeta_1, \dots, \zeta_n) \in X^{(n)}$,
$$\frac{\left| \det \left( f_j(\zeta_k) \right)_{j, k = 1}^n  \right|^2}{ \det
\left( \langle \psi_j, \psi_k \rangle_{G} \right)} = \;\; \det
\left(B_G(\zeta_j, \zeta_k) \right)_{j, k = 1}^n  . $$
\end{lem}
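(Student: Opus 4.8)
The plan is to treat this as a pure linear-algebra identity, as the remark preceding it suggests. First, Riemann-Roch gives $\dim H^0(X,E) = \deg E + 1 - g = n$ (using $\deg E = n+g-1 \geq 2g-1$, so that $E$ is nonspecial and $h^1(X,E)=0$); hence $\{\psi_j\}_{j=1}^n$ is genuinely a basis and every matrix below is square of size $n$. Let $G=(\langle\psi_j,\psi_k\rangle_G)_{j,k=1}^n$ be the Gram matrix, which is Hermitian positive definite, so $\det G>0$. Fix a $G$-orthonormal basis $\{S_\ell\}_{\ell=1}^n$ and let $C=(C_{j\ell})$ be the change-of-basis matrix determined by $S_\ell=\sum_j C_{j\ell}\psi_j$. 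Writing coefficients in the frame $e_E$ as $\psi_j=f_j e_E$ and $S_\ell = s_\ell e_E$, this reads $s_\ell=\sum_j C_{j\ell} f_j$.

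Next I would compute the Bergman matrix $B:=(B_G(\zeta_j,\zeta_k))_{j,k=1}^n$ directly from the definition $B_G(z,w)=\sum_\ell s_\ell(z)\overline{s_\ell(w)}$ of \S \ref{BERGMANSECTION}. Let $V$ be the matrix with entries $V_{kj}=f_j(\zeta_k)$, so that $|\det V|=|\det(f_j(\zeta_k))|$, and let $\tilde V$ be the matrix with entries $\tilde V_{k\ell}=s_\ell(\zeta_k)$. The relation $s_\ell=\sum_j C_{j\ell} f_j$ then says exactly $\tilde V = VC$, while the definition of $B_G$ gives $B=\tilde V \tilde V^{*}$. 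Substituting yields $B = VCC^{*}V^{*}$, and hence, taking determinants, $\det B = |\det V|^2\,|\det C|^2$.

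Finally I would evaluate $|\det C|^2$ in terms of $G$. Orthonormality $\langle S_\ell, S_m\rangle_G=\delta_{\ell m}$ becomes the matrix identity $C^{T}G\,\overline{C}=I$ (the precise placement of conjugates depends on which slot of $\langle\cdot,\cdot\rangle_G$ is conjugate-linear, but this is immaterial for the determinant); taking determinants gives $|\det C|^2\det G=1$, i.e.\ $|\det C|^2=1/\det G$. Combined with the previous step, $\det B = |\det V|^2/\det G$, which is precisely the asserted identity since $\det V = \det(f_j(\zeta_k))$ up to an irrelevant transpose.

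I do not expect a serious obstacle here; the only points requiring care are bookkeeping ones. One must keep the conjugate-linear slot of $\langle\cdot,\cdot\rangle_G$ consistent so that the Gram relation comes out as $C^{T}G\,\overline{C}=I$, and observe that the local-frame factor $e_E(\zeta_k)$ enters the numerator $|\det(f_j(\zeta_k))|^2$ and the Bergman matrix $\det(B_G(\zeta_j,\zeta_k))$ in the same fashion, so the identity is independent of the choice of frame $e_E$. The hypothesis $\deg E\geq 2g-1$ is used only through Riemann-Roch, to guarantee $h^0(X,E)=n$ and thereby keep all matrices square.
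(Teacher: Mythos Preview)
Your proof is correct. It is the same linear-algebra idea as the paper's---reduce everything to determinants via a change of basis and use multiplicativity---but you and the paper pick dual auxiliary bases. The paper expands the $\psi_j$ in terms of the coherent states $\Phi^{\zeta_k}$ via a matrix $A$, writes the Gram matrix of the $\psi_j$ as $A^*MA$ with $M=(B_G(\zeta_j,\zeta_k))$, and the evaluation matrix as $AM$, then takes ratios. You instead expand an orthonormal basis in terms of the $\psi_j$ via $C$, write $B=VCC^*V^*$, and read off $|\det C|^2=1/\det G$ from $C^TG\overline{C}=I$. Your route has the small advantage that it does not require the coherent states $\{\Phi^{\zeta_k}\}$ to be linearly independent (which fails precisely on the zero locus of the Bergman determinant, so the paper's argument implicitly relies on continuity to cover that set); otherwise the two arguments are interchangeable.
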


If we include the frame $e_E$ we would have the additional factor
of $\prod_{j, k = 1}^n e_E(\zeta_j) \otimes e_E(\zeta_k)^*$ on
both sides. If we then contract with the Hermitian metric $h =
e^{- \phi}$ we would multiply both sides by $ e^{- \sum_{j = 1}^n
\phi(\zeta_j)}$.

\begin{proof}

This follows from the general formula that if $\{v_j\}$ is a basis
of an inner product space $V$, then
\begin{equation} \label{INNPROD} \left| v_1 \wedge \cdots
 \wedge v_n \right|^2 = \det \left( \langle v_j, v_k \rangle \right). \end{equation}

We consider a basis of coherent states $\Phi^{\zeta_k}$ for
$H^0(X, E)$ ($k = 1, \dots, n)$ with respect to $G$. Then
$\psi_j(\zeta_k) = \langle \psi_j,  \Phi^{\zeta_k} \rangle_G$. We
define the matrices,
\begin{equation} M := \left( \langle \Phi^{\zeta_k},
\Phi^{\zeta_{\ell}} \rangle \right) = \left( B_G(\zeta_j,
\zeta_{\ell} \right), \;\;\; A = \left( A_j^k \right), \;\; \psi_j
= \sum A_j^k \Phi^{\zeta_k}.
\end{equation} Then
$$\det \left( \langle \psi_j, \psi_k \rangle_G \right) = \det A^*
A \det M = |\det A|^2 \det M. $$ Also,
$$\left(\langle \psi_j,  \Phi^{\zeta_k} \rangle_G \right) = A M.
$$
It follows that
$$\frac{|\det \left(\langle \psi_j,  \Phi^{\zeta_k} \rangle_G
\right)|^2}{\det \left( \langle \psi_j, \psi_k \rangle_G \right)}
= \frac{|\det A|^2 |\det M|^2}{|\det A|^2 \det M} = \det M. $$

  \end{proof}

We now consider the determinant  $ \det \begin{pmatrix}
\psi_n(\zeta_j)
\end{pmatrix}_{j,n = 1}^N$ in Lemma \ref{SLATER1}.  Note that this
matrix omits the column $n = 0$. Hence, the relevant Bergman
kernel is the conditional Bergman kernel for $H^0_{P_0}(X,
\lcal_{N + g})$ with respect to the admissible metric and measure
$d\nu$ (see  \S \ref{BERGMANSECTION}). Since this is not a
standard object, we use the isomorphism  of this conditional space
with  $H^0(X, E_{N + g - 1})$ with the induced inner product. Then
we can use the Bergman kernel for this space of sections. On the
other hand, its basis $\psi_j^E$ of sections differ from those
$\psi_j$  of $\lcal_{N + g}$ by the factor ${\bf 1}_{\ocal(P_0)}$,
which will show up in its Slater determinant.

 \begin{cor}\label{SLATEREa} Let $\{\hat{\psi}_j\}$ be the orthonormal basis of $H^0_{P_0}(X, \lcal_{N + g})$
 and let $B_N$ be the conditional Bergman kernel for the admissible inner product. Let $G$ be the isometric inner product on $H^0(X,
 E_{N  + g - 1}) \simeq H^0_{P_0}(X, \lcal_{N - g})$ and let
 $B_{E_{N + g -1}}$ be its Bergman kernel.
 Then, the Slater determinant in the denominator of Proposition
 \ref{BASICFORMULA} is given by,
$$\begin{array}{lll} \left| \det \left( \psi_j(\zeta_k) \right)_{j, k = 0}^{N - 1} \right|^2
& = & \left| \prod_{k = 1}^N {\bf 1}_{P_0}(\zeta_k) \right|^2
 \left| \det \left( \psi_j^E(\zeta_k) \right)_{j, k = 0}^{N - 1}  \right|^2 \\ && \\
 &=&
\;\; \left| \prod_{k = 1}^N {\bf 1}_{P_0}(\zeta_k) \right|^2
 \det \left(B_{E_{N + g -1}}(\zeta_j, \zeta_k) \right)_{j, k = 1}^N  \det \left( \langle \psi_j, \psi_k \rangle_{G} \right).
 \end{array} $$
\end{cor}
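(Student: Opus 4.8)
The plan is to reduce the statement to the general Gram--Bergman identity of Lemma~\ref{SLATERa} by factoring out the single prime form ${\bf 1}_{P_0}$ that relates the two line bundles. Recall from \S\ref{VIETASECT} that the map $\otimes\,{\bf 1}_{P_0}:H^0(X,E_{N+g-1})\to H^0_{P_0}(X,\lcal_{N+g})$ is an isometry for the admissible inner products, carrying the basis $\{\psi_j^E\}$ of $H^0(X,E_{N+g-1})$ to the basis $\{\hat\psi_j\}$ of $H^0_{P_0}(X,\lcal_{N+g})$, so that $\hat\psi_j=\psi_j^E\otimes {\bf 1}_{P_0}$. Choosing compatible local frames $e_{\lcal_{N+g}}=e_{E_{N+g-1}}\otimes e_{\ocal(P_0)}$, the local coefficient function of $\hat\psi_j$ is the product of those of $\psi_j^E$ and of ${\bf 1}_{P_0}$; in the notation of the paper (where ${\bf 1}_{P_0}(z)=E(z,P_0)$ denotes the local expression) this reads $\psi_j(z)={\bf 1}_{P_0}(z)\,\psi_j^E(z)$. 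The only point to check here is that no extra metric factors intervene, which is exactly the frame bookkeeping recorded in the Remark following Lemma~\ref{SLATERa}.

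Given this, the first equality is immediate from column operations on the determinant: since the factor ${\bf 1}_{P_0}(\zeta_k)$ depends only on the column index $k$, it may be pulled out of the $k$th column, yielding
\[
\det\left(\psi_j(\zeta_k)\right)_{j,k}=\left(\prod_{k=1}^N {\bf 1}_{P_0}(\zeta_k)\right)\det\left(\psi_j^E(\zeta_k)\right)_{j,k},
\]
and taking $|\cdot|^2$ gives the stated factorization.

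For the second equality I would apply Lemma~\ref{SLATERa} to the line bundle $E=E_{N+g-1}$, whose degree $N+g-1\geq 2g-1$ meets the hypothesis, with the induced inner product $G$ and the basis $\{\psi_j^E\}$. The Lemma gives
\[
\left|\det\left(\psi_j^E(\zeta_k)\right)\right|^2=\det\left(B_{E_{N+g-1}}(\zeta_j,\zeta_k)\right)_{j,k=1}^N\det\left(\langle \psi_j^E,\psi_k^E\rangle_G\right),
\]
and substituting into the first equality completes the proof. (Since $\{\psi_j^E\}$ is orthonormal for $G$, the Gram determinant $\det(\langle\psi_j^E,\psi_k^E\rangle_G)$ in fact equals $1$; it is retained here only to match the general form of Lemma~\ref{SLATERa}.) The one genuine obstacle is the frame compatibility in the first paragraph: all three bundles must be trivialized consistently so that the passage between coefficient functions is the clean multiplicative relation above, with the prime-form factor and no leftover transition or metric terms.
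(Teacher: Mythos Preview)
Your proof is correct and follows exactly the approach the paper intends: factor the canonical section ${\bf 1}_{P_0}$ out of each column via the isometry $\otimes\,{\bf 1}_{P_0}:H^0(X,E_{N+g-1})\to H^0_{P_0}(X,\lcal_{N+g})$, then apply Lemma~\ref{SLATERa} to the line bundle $E_{N+g-1}$. The paper does not spell out a separate proof of the corollary beyond the preceding paragraph, which records precisely the two observations you use.
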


\subsection{\label{PFREM} Projective linear ensemble: Proof of I of Theorem \ref{JPDHG}}

Put:
\begin{equation} \label{FCALNDEF} \fcal_N(\zeta_1, \dots, \zeta_N,
P_0) = ||\Phi_{h^N}^{P_0}||^{- 2} \left|\langle S_{\zeta_1, \dots,
\zeta_N}, \Phi_{h^N}^{P_0} \rangle  \right|^{ 2 }.
\end{equation}
We now state I of Theorem \ref{JPDHG} in a more precise way and
complete the proof:

\begin{maintheo} The pull-back under $\psi_{\lcal}: X^{(N)}
\backslash X^{(N)}_{N + g} \to \PP H^0(X, \lcal_{N + g})$ (resp.
$\sigma_{\lcal}: \tilde{X}^{(N)} \to \PP H^0(X, \lcal_{N + g}))$
of the Fubini-Study volume form, is given in uniformizing
coordinates on the cover $X^N$ by,
$$\begin{array}{lll} (I)\;\; \vec K^N_{PL}(\zeta_1, \dots, \zeta_N)  & = & \frac{1}{Z_N(h)} \frac{\fcal_N(\zeta_1, \dots, \zeta_N, P_0) \prod_{k = 1}^N \left|
\; \prod_{j = 1}^g E(P_j, \zeta_k)   \cdot \prod_{j: k \not= j}
  E(\zeta_j, \zeta_k)\right|^2}{ \; \det
  \left(B_N (\zeta_j, \zeta_k) \right)_{j, k = 1}^N} \prod_{j = 1}^N d\zeta_j \wedge d \bar{\zeta}_j  \\ && \\
  && \times \left(\int_X \left| \prod_{j = 1}^g E (P_j, z) \right|_{h_g}^2 \cdot \left| \prod_{j = 1}^N E (\zeta_j, z)\right|^2_{h^N} d\nu(z) \right)^{- N - 1}
  \end{array},$$
  where $ P_1 + \cdots + P_g = A_{\lcal_{N + g}} (\zeta_1 + \cdots + \zeta_N)$.
(defined in \S \ref{AJSECT}) and $Z_N(h)$ is a normalizing
constant so that $\vec{K}^N$ has mass one). It extends to a smooth
form on extended configuration space $\tilde{X}^{(N)}$.
\end{maintheo}

The proof   follows   Proposition  \ref{VOLAd},
  Proposition \ref{BASICFORMULA} and Corollary \ref{SLATEREa}.
That is, we first use \begin{eqnarray} \vec{K}^N_{PL}  & = &
||\Phi_{h^N}^{P_0}||^{- 2(N + 1)} \left|\langle S_{\zeta_1, \dots,
\zeta_N}, \Phi_{h^N}^{P_0} \rangle  \right|^{2 N + 2 } \;\;
\frac{\prod_{j = 1}^{N } d\ecal_j \wedge d
\bar{\ecal}_j}{||\prod_{j = 1}^g {\bf 1}_{\ocal(P_j)}(z) \cdot
\prod_{j = 1}^N {\bf 1}_{\ocal(\zeta_j)}||_{L^2(G_{N + g}(\omega,
\nu))}^{2(N + 1)}}.
 \end{eqnarray}

 Second we use
 $$\det \left( \begin{pmatrix} \frac{\partial
\ecal_{N - j}}{\partial \zeta_k}
\end{pmatrix}_{j, k = 1}^{N } \right)   = || \Phi_{h^N}^{P_0}||^{ N} \frac{\prod_{k = 1}^N
\prod_{j = 1}^g {\bf 1}_{\ocal(P_j)}(\zeta_k) }{\langle
S_{\zeta_1, \dots, \zeta_N}, \Phi_{h^N}^{P_0} \rangle^N \; \det
\begin{pmatrix} \psi_n(\zeta_j) \end{pmatrix}} \;
\left(\prod_{k = 1}^N \prod_{j: k \not= j} {\bf
1}_{\ocal(\zeta_j)} (\zeta_k)\right).
$$
Here it is assumed that $\{\psi_n\}$ is orthonormal. Finally, we
take the norm square and simplify.

It is a smooth form on extended
  configuration space since it is the pullback of a smooth form
  under a smooth map.

\subsection{\label{BERGZEROS} Analysis of zeros}

Since $\vec K_{PL}^N$ is a smooth form,  the zeros of the
denominator must be cancelled by the zeros of the numerator in the
expression (I) of Theorem \ref{JPDHG}. We now verify this as a
check on the calculation. We begin with the Bergman determinant:

\begin{lem} \label{BERGZEROSLEM} The zero set of  $\det \left(B_{E_{N + g -1}}(\zeta_j, \zeta_k) \right)_{j, k = 1}^N$ on $X^{(N)}$
consists of the `diagonals' $\zeta_j = \zeta_k$, together with the
$\zeta_1 + \cdots + \zeta_N$ such that $[E_{N + g - 1}] - (\zeta_1
+ \zeta_2 + \cdots + \zeta_N) \in \Theta = W^1_{g -1}$, i.e. has a
one-dimensional space of holomorphic sections. Multiplication by
${\bf 1}_{P_0}$ maps this subspace to a one-dimensional subspace
of $H_{P_0}(X, \lcal_{N + g})$ vanishing at $P_0, \zeta_1, \dots,
\zeta_N$.

\end{lem}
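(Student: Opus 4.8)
The plan is to reduce the vanishing of the Bergman determinant to that of a Slater (evaluation) determinant, and then to read off the latter cohomologically via Riemann's vanishing theorem. First I would invoke Lemma~\ref{SLATERa} with $E = E_{N + g - 1}$, whose degree $N + g - 1 \geq 2g - 1$ gives $\dim H^0(X, E_{N + g - 1}) = N$. Writing a basis as $\psi_j^E = f_j^E e_E$, $j = 1, \dots, N$, the lemma yields
$$\det\left(B_{E_{N + g - 1}}(\zeta_j, \zeta_k)\right)_{j,k=1}^N = \frac{\left|\det\left(f_j^E(\zeta_k)\right)_{j,k=1}^N\right|^2}{\det\left(\langle \psi_j^E, \psi_k^E\rangle_G\right)}.$$
Since the denominator is the Gram determinant of a basis of an inner product space, it is strictly positive; hence the zero set of the Bergman determinant equals that of the Slater determinant $\det(f_j^E(\zeta_k))$.

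Next I would interpret the Slater determinant as an evaluation pairing. Up to the nonvanishing frame factors $\prod_k e_E(\zeta_k)$, it is the value at $(\zeta_1, \dots, \zeta_N)$ of the evaluation map
$$\mathrm{ev}: H^0(X, E_{N + g - 1}) \longrightarrow \bigoplus_{k=1}^N (E_{N + g - 1})_{\zeta_k}, \qquad s \longmapsto (s(\zeta_1), \dots, s(\zeta_N)),$$
a map between two $N$-dimensional spaces, which vanishes exactly when $\mathrm{ev}$ fails to be an isomorphism. If $\zeta_j = \zeta_k$ for some $j \neq k$, then the $j$th and $k$th rows of $(B_{E_{N+g-1}}(\zeta_j,\zeta_k))$ coincide and the determinant vanishes; this gives the diagonal part of the zero set. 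If instead the $\zeta_k$ are distinct, then $\mathrm{ev}$ has nontrivial kernel precisely when there is a nonzero $s \in H^0(X, E_{N + g - 1})$ with $\dcal(s) \geq \zeta_1 + \cdots + \zeta_N$.

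I would then translate this last condition into $\mbox{Pic}^{g-1}(X)$. Such an $s$ exists iff $h^0\!\left(X, E_{N + g - 1}\otimes \ocal(-(\zeta_1 + \cdots + \zeta_N))\right) \geq 1$; the twisted bundle has degree $(N + g - 1) - N = g - 1$, so by Riemann's vanishing theorem (\S\ref{THETASTUFF}) it has a nonzero section iff its class $[E_{N + g - 1}] - (\zeta_1 + \cdots + \zeta_N)$ lies in the theta divisor $\Theta \subset \mbox{Pic}^{g-1}(X)$, the generic point of which carries a one-dimensional space of sections. This is the second component in the statement. For the final assertion, under the isometry $\otimes\,{\bf 1}_{P_0}: H^0(X, E_{N + g - 1}) \stackrel{\sim}{\to} H^0_{P_0}(X, \lcal_{N + g})$ of \S\ref{VIETASECT}, such an $s$ maps to $s\otimes {\bf 1}_{P_0}$, which vanishes at $\zeta_1, \dots, \zeta_N$ (because $s$ does) and at $P_0$ (because ${\bf 1}_{P_0}$ does), giving the claimed one-dimensional subspace.

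The main obstacle is the bookkeeping at the diagonals: to pin down the zero set with its correct multiplicity rather than merely as a point set, one must replace bare evaluation by its jets when points collide, turning the Slater determinant into a confluent, Wronskian-type determinant exactly as in the derivative computation of Lemma~\ref{SLATER2G}. For the set-theoretic statement asserted here the two cases above suffice, so I would not pursue that refinement.
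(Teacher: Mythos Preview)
Your proof is correct and follows essentially the same route as the paper. The only cosmetic difference is that you invoke Lemma~\ref{SLATERa} to reduce the Bergman determinant to a Slater determinant and then read off the kernel of the evaluation map, whereas the paper argues directly that the Bergman matrix is the Gram matrix of the coherent states $\Phi_G^{\zeta_j}$ and hence vanishes exactly when those fail to be linearly independent, i.e.\ when the evaluation map $H^0(X,E_{N+g-1})\to\bigoplus_j E_{N+g-1}[\zeta_j]$ is not an isomorphism; the cohomological translation to $\Theta\subset\mbox{Pic}^{g-1}$ and the final multiplication by ${\bf 1}_{P_0}$ are then identical in both arguments.
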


We will denote this set by $X^{(N)}_{N + g - 1}$, analogously to
$X^{(N)}_{N + g}$. Thus we now have three `bad' sets:

\begin{itemize}

\item The branch locus  $\bcal^{(N)}_{N + g}$  of the map $\PP
\tilde{\ecal}_N \to \PP H^0(X, \lcal_{N + g})$.

\item   $X^{(N)}_{N + g}$, the set  where the representation
$\lcal - \zeta_1 + \cdots + \zeta_N = P_1 + \cdots + P_g$ fails to
be unique.

\item  $X^{(N)}_{N + g -1}$ where $E_{N + g -1} - \sum_{j = 1}^N
\zeta_j = Q_1 + \cdots + Q_{g-1}$, or equivalently $\lcal -
\zeta_1 + \cdots + \zeta_N = P_0 + Q_1 + \cdots + Q_{g - 1}$. The
`extra zeros' of the Bergman determinant are in $X^{(N)}_{N + g
-1}$.

\end{itemize}

\begin{proof}


 Let $E$ be a line bundle of degree $N + g -
1$ and recall the Abel type maps,
\begin{equation} \label{FALAR} A_{E, g - 1}: X^{(N)} \to \mbox{Pic}^{g - 1}(X), \;\;
A_{E, g-1} (P_1 + \cdots +  P_N) : = \ocal (E) (\ocal (P_1 +
\cdots + P_N))^{-1}.
\end{equation} As mentioned above, in  $\mbox{Pic}^{g-1}$ there is the divisor $ \Theta$
of bundles which have global holomorphic sections. Given a base
point it can be identified with the Wirtinger variety $W_{g -1}
\subset \mbox{Jac}(X)$ under the further tensor product by
$\ocal(- (g-1) P_0)$.  If $\zeta_1 + \cdots + \zeta_N \notin A_{E,
g-1}^{-1} (\Theta)$ then the map $s \in H^0(X, E) \to \bigoplus_{j
= 1}^N s( \zeta_j)$ defines an isomorphism
\begin{equation} \label{ISOMORPHISM} H^0(X, E) \simeq \bigoplus_{j = 1}^N E [\zeta_j].
\end{equation}
Indeed, the  map fails to be an isomorphism if and only if there
exists a non-zero section $s$ such that $\dcal(s) \geq \zeta_1 +
\cdots + \zeta_N$. Then $\frac{s}{\Pi_{j = 1}^N {\bf
1}_{\ocal(\zeta_j)}} \in H^0(X, \ocal (E) (\ocal (\zeta_1 + \cdots
+ \zeta_N))^{-1})$ is a non-zero section and $\ocal (E) (\ocal
(\zeta_1 + \cdots + \zeta_N))^{-1}) \in \Theta$.  Conversely, if
$\in H^0(X, \ocal (E) (\ocal (\zeta_1 + \cdots + \zeta_N))^{-1})$
has a non-trivial section $e_{g -1}$ then $e_{g-1} \prod_{j = 1}^N
{\bf 1}_{\ocal(\zeta_j)} \in H^0(X, E_{N + g -1})$ is a section
vanishing at all $\zeta_j$.

Now let $E = E_{N + g - 1} = (N + g - 1) P_0$ and let
$\{\Phi_{G}^{\zeta_j}\}_{j = 1}^N$ be the set of coherent states
in $H^0(X, E_{N + g-1}) \simeq H^0_{P_0}(X, \lcal_{N + g})$
centered at the points $\{\zeta_j\}$. Then the Gram matrix of
inner products of these coherent states is
$$( \langle \Phi_{G}^{\zeta_j}, \Phi_{G}^{\zeta_k} \rangle_G ) = (
B_G(\zeta_j, \zeta_k) ). $$ It follows that the zeros of the
Bergman determinant are the points $\{\zeta_1, \dots, \zeta_N\}$
such that the coherent states $\Phi_G^{\zeta_j}$ fail to be
linearly independent or equivalently such that the evaluation map
in
 (\ref{ISOMORPHISM}) fails to be an isomorphism, and so
 $\{\zeta_1, \dots \zeta_N\} \in A_{E_{N + g -1}, g}^{-1}
 (\Theta)$.

 In this case, $H^0(X, E_{N + g-1}) \simeq H^0_{P_0}(X, \lcal_{N + g})$ is
 one-dimensional, hence there exists a one-dimensional space of
 $s \in H^0(X, E_{N + g - 1})$ vanishing at all $\zeta_j$.
The last statement is then obvious.

\end{proof}

\begin{lem} \label{ALLZEROSCANCELLED} As functions on  extended configuration space $\tilde{X}^{(N)} $, the zeros of $\det \left(B_{E_{N + g -1}}(\zeta_j,
\zeta_k) \right)$ are cancelled by  those of the numerator
$$\left|\prod_{k = 1}^N \prod_{j = 1}^g {\bf
1}_{\ocal(P_j)}(\zeta_k) \left(\prod_{k = 1}^N \prod_{j: k \not=
j} {\bf 1}_{\ocal(\zeta_j)} (\zeta_k)\right)\right|^2. $$

\end{lem}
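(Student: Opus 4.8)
The plan is to turn the assertion into a comparison of zero divisors of holomorphic sections over $\tilde{X}^{(N)}$, and to read that comparison off the algebraic identity of Proposition \ref{BASICFORMULA}. First I would eliminate the Bergman determinant in favour of a Slater determinant. By Lemma \ref{SLATERa} applied to $E = E_{N+g-1}$ (degree $\geq 2g-1$, $\dim H^0 = N$), for a basis $\{\psi^E_j = f_j e_E\}$ of $H^0(X,E_{N+g-1})$ we have $\det\left(B_{E_{N+g-1}}(\zeta_j,\zeta_k)\right) = |\Delta_E(\zeta)|^2 / \det\left(\langle \psi^E_j, \psi^E_k\rangle_G\right)$, where $\Delta_E(\zeta) := \det\left(f_j(\zeta_k)\right)_{j,k=1}^N$ is the Slater determinant and the Gram determinant is a strictly positive constant. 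Thus the vanishing of $\det\left(B_{E_{N+g-1}}\right)$ is controlled, with matching orders, by that of the holomorphic $\Delta_E$, and writing the numerator of the Lemma as $|\ncal(\zeta)|^2$ with $\ncal(\zeta) = \prod_{k=1}^N\prod_{j=1}^g {\bf 1}_{\ocal(P_j)}(\zeta_k)\cdot\prod_{k=1}^N\prod_{j\neq k}{\bf 1}_{\ocal(\zeta_j)}(\zeta_k)$, it suffices to prove $\mbox{div}(\ncal)\geq \mbox{div}(\Delta_E)$ as holomorphic sections.

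The key step is to invoke Proposition \ref{BASICFORMULA}. Solving that identity for $\ncal$ and substituting the exact factorization $\det\left(\psi_n(\zeta_j)\right) = \prod_{k=1}^N {\bf 1}_{P_0}(\zeta_k)\,\Delta_E(\zeta)$ underlying Corollary \ref{SLATEREa} (since $\psi_n = {\bf 1}_{P_0}\otimes\psi^E_n$, so that the factor ${\bf 1}_{P_0}(\zeta_k)$ pulls out of each column), one obtains
$$\ncal(\zeta) = \frac{\langle S_{\zeta_1,\dots,\zeta_N},\Phi_{N+g}^{P_0}\rangle^N}{\|\Phi_{N+g}^{P_0}\|^N}\;\det\!\left(\frac{\partial\ecal_{N-j}}{\partial\zeta_k}\right)_{j,k=1}^N\;\prod_{k=1}^N{\bf 1}_{P_0}(\zeta_k)\;\Delta_E(\zeta).$$
On the chart $\langle S_{\zeta_1,\dots,\zeta_N},\Phi_{N+g}^{P_0}\rangle\neq 0$ the scalar prefactor is nowhere zero and $\prod_k {\bf 1}_{P_0}(\zeta_k)$ is holomorphic, so $\mbox{div}(\ncal)\geq \mbox{div}(\Delta_E)$ provided the Jacobian $\det\left(\partial\ecal_{N-j}/\partial\zeta_k\right)$ is holomorphic, hence has effective divisor. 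For this I would use Proposition \ref{CANONICAL}: $\sigma_{\lcal}$ is a genuine holomorphic map on all of $\tilde{X}^{(N)}$, so the affine coordinates $\ecal_{N-j} = w_j(S_{\zeta})$ are holomorphic functions of $\zeta$ on this chart and their Jacobian is holomorphic. Passing to $|\cdot|^2$ then yields $\mbox{div}(|\ncal|^2)\geq 2\,\mbox{div}(\Delta_E) = \mbox{div}\left(\det B_{E_{N+g-1}}\right)$, which is the asserted cancellation.

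Finally I would cross-check the geometry against Lemma \ref{BERGZEROSLEM}. The Slater determinant $\Delta_E$ vanishes simply on each diagonal $\zeta_i = \zeta_j$, and this is absorbed (indeed with a spare order of vanishing, consistent with the branching of $p_N\colon X^N\to X^{(N)}$ recalled in \S\ref{XN}) by the factor $\prod_k\prod_{j\neq k}{\bf 1}_{\ocal(\zeta_j)}(\zeta_k)$; off the diagonals $\Delta_E$ vanishes exactly on $X^{(N)}_{N+g-1} = \{[E_{N+g-1}]-(\zeta_1+\cdots+\zeta_N)\in\Theta\}$, where the identity forces the canonical factor $\prod_k\prod_{j=1}^g {\bf 1}_{\ocal(P_j)}(\zeta_k)$ to vanish, i.e. some $P_j$ must collide with some $\zeta_k$. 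The main obstacle I anticipate is precisely the holomorphy (pole-freeness) of the Jacobian across the Wirtinger loci $X^{(N)}_{N+g}$ and $X^{(N)}_{N+g-1}$: one must verify that on the extended space $\tilde{X}^{(N)}$, where $P_1+\cdots+P_g$ is no longer indeterminate and $\sigma_{\lcal}$ extends holomorphically, the coordinates $\ecal_{N-j}$ and hence their Jacobian acquire no poles, so that the divisor inequality holds at every point and not merely on the generic locus.
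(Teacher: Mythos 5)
Your plan is a genuinely different route from the paper's: the paper proves the lemma divisor-theoretically, using Lemma \ref{BERGZEROSLEM} to identify the non-diagonal zeros of the Bergman/Slater determinant with $X^{(N)}_{N+g-1}$ (configurations through which a section of $E_{N+g-1}$ vanishes at $\zeta_1+\cdots+\zeta_N+Q_1+\cdots+Q_{g-1}$), then using the unique representation $P_1+\cdots+P_g=P_0+Q_1+\cdots+Q_{g-1}$ and comparing vanishing on $N$-element subsets of the enlarged configuration; it never inverts Proposition \ref{BASICFORMULA}. Unfortunately the step you defer to the end --- holomorphy of the Jacobian $\det\left(\partial\ecal_{N-j}/\partial\zeta_k\right)$ across the bad loci --- is not a removable technicality: it fails, and it fails exactly on the component that makes the lemma nontrivial. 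The affine coordinates $\ecal_{N-j}=w_j(S_\zeta)$ are obtained by dividing by $Z_0=\langle S_{\zeta_1,\dots,\zeta_N},\Phi_{N+g}^{P_0}\rangle = S_{\zeta_1,\dots,\zeta_N}(P_0)$, which vanishes identically along the divisor of $\tilde X^{(N)}$ where some $P_j=P_0$; and that divisor is precisely the lift of the generic part of $X^{(N)}_{N+g-1}$, since $(N+g-1)P_0-\sum_j\zeta_j\sim Q_1+\cdots+Q_{g-1}\geq 0$ forces $P_1+\cdots+P_g=P_0+Q_1+\cdots+Q_{g-1}$, i.e.\ some $P_j$ hits the base point. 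Along this locus $\sigma_{\lcal}$ carries the configuration into the hyperplane at infinity $\{Z_0=0\}$ of $\PP H^0(X,\lcal_{N+g})$: Proposition \ref{CANONICAL} gives holomorphy of the \emph{projective} map, not boundedness of affine chart coordinates, and your own identity forces the Jacobian to have a pole there of order (generically) $N+1$, because the prefactor $\langle S_\zeta,\Phi_{N+g}^{P_0}\rangle^N$ vanishes to order $N$ and $\Delta_E$ to order one, while $\ncal$ and $\prod_k{\bf 1}_{P_0}(\zeta_k)$ do not vanish. The same blow-up of the Jacobian occurs across the locus over the Wirtinger set $W^1_g$ (there $P$ is not even a holomorphic function of $\zeta$, so $\partial\ecal_{N-j}/\partial\zeta_k$ degenerates), which is the other locus you flagged.

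Worse, the global inequality $\mbox{div}(\ncal)\geq\mbox{div}(\Delta_E)$ that your scheme targets is simply false along this component, so no repair inside your framework is possible: take $P_1=P_0$ with $P_2,\dots,P_g,\zeta_1,\dots,\zeta_N$ pairwise distinct and all distinct from $P_0$; then $(N+g-1)P_0-\sum_j\zeta_j\sim P_2+\cdots+P_g$ is effective, so $\Delta_E(\zeta)=0$, while $\ncal(\zeta,P)\neq 0$ because no $P_j$ equals any $\zeta_k$ and no two $\zeta$'s coincide. This also shows the cross-check in your final paragraph is mistaken: on $X^{(N)}_{N+g-1}$ the forced collision is $P_j=P_0$, not $P_j=\zeta_k$. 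What your chart computation does correctly deliver is the cancellation of the coincidence zeros by $\prod_k\prod_{j\neq k}{\bf 1}_{\ocal(\zeta_j)}(\zeta_k)$, the easy half that Lemma \ref{SLATERa} plus Proposition \ref{BASICFORMULA} can see inside $\{Z_0\neq 0\}$. The extra zeros along $X^{(N)}_{N+g-1}$ are accounted for in expression (I) of Theorem \ref{JPDHG} with the essential help of the factor $\fcal_N$ of (\ref{FCALNDEF}), which vanishes exactly where $S_\zeta(P_0)=0$ --- this is how the paper's bookkeeping ultimately runs, via the factor $\prod_j|E(P_j,P_0)|^2$ in \S\ref{JPCCALCULATIONIIsub} and Proposition \ref{FNFINAL} --- so any correct proof must bring in the Abel--Jacobi structure of that locus as the paper's proof does, rather than extract the cancellation from the Jacobian identity alone.
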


\begin{proof}

 Given   $\zeta_2, \dots, \zeta_{N}$,
$\det
  \left(B_{E_{N + g -1}} (\zeta_j, \zeta_k) \right)_{j, k = 1}^N$,
    vanishes at $\zeta_1 = \zeta_2, \dots, \zeta_{N }$ and at $ g  $ further
points $\zeta_1 +  Q_1 + \cdots + Q_{g -1}$ so that $[E_{N + g
-1}] = \sum_{j = 2}^N \zeta_j + \zeta_1 +  Q_1 + \cdots + Q_{g
-1}. $ This follows from Lemma \ref{BERGZEROSLEM}. Vanishing of
the Bergman determinant is equivalent to vanishing of the Slater
determinant $(S_k(\zeta_j)$ for some (hence any) basis $\{S_k\}$
of $H^0(X, E_{N + g - 1})$. Suppose there exists a section $S$
vanishing at $\zeta_2 \cdots + \zeta_N + \zeta + Q_1 + \cdots +
Q_{g - 1}$. Putting $S_1 = S$ shows that the Slater determinant
vanishes at any $N$-element subset of this configuration.

Viewed as a function of $\zeta_1$ (or any other index), the
product $$ \left(\prod_{k = 1}^N \prod_{j: k \not= j} {\bf
1}_{\ocal(\zeta_j)} (\zeta_k)\right) \prod_{k = 1}^N \prod_{j =
1}^g {\bf 1}_{\ocal(P_j)}(\zeta_k) $$ also vanishes at any
$N$-element subset of this configuration. Indeed, adding $P_0$ to
a  configuration from $X^{(N)}_{N + g - 1}$ produces a special
type of configuration from $X^{(N)}_{N + g}$ with $Q_1 + \cdots +
Q_{g -1 } + P_0 = P_1 + \cdots + P_g$.  This representation is
unique since $\dim H^0(X, \ocal(Q_1 + \cdots + Q_{g-1})) = 1$. It
follows that $[\lcal_{N + g}] = \sum_{j = 1}^N \zeta_j + P_0  +
Q_1 + \cdots + Q_{g - 1}. $.
\end{proof}

\begin{rem} The factor $ \left(\prod_{k = 1}^N \prod_{j: k \not= j} {\bf
1}_{\ocal(\zeta_j)} (\zeta_k)\right)$ cancels all of the
`coincidence zeros' and indeed it has two factors of $\zeta_j -
\zeta_k$ for each $j \not= k$ while the Bergman determinant has
one. This effectively leaves a product $\prod_{j < k} {\bf
1}_{\ocal(\zeta_j)} (\zeta_k)$.

The factor $ \prod_{k = 1}^N \prod_{j = 1}^g {\bf
1}_{\ocal(P_j)}(\zeta_k)$ always vanishes when $\zeta_k = P_j$ for
some $j, k$, including when $\zeta_1 + \cdots + \zeta_{N } \notin
X_{N + g -1}^{(N)}$, i.e.  when $(N + g) P_0 - \sum_{j = 1}^N
\zeta_j = P_1 + \cdots + P_g$ but there does not exist $Q_1 +
\cdots + Q_{g -1}$ such that $(N + g ) P_0 - \sum_{j = 1}^N
\zeta_j = P_0 + Q_1 + \cdots + Q_{g-1}$, i.e. when no $P_j$ equals
$P_0$.   In this sense it has `more zeros' than necessary to
cancel the zeros of the Bergman determinant. One may view the
extra zeros as arising from the degeneracy of the map $D
\psi_{\lcal_{N + g}}$ in Proposition \ref{CANONICAL}.

\end{rem}

\section{\label{JPCCALCULATIONII} JPC and  Green's functions:
Proof of II of Theorem \ref{JPDHG}}

In this section, we rewrite the formula for the JPC of the
projective linear ensemble  in terms of  Green's functions. In
particular, we rewrite the Bergman (or Slater) determinant as  a
product of values of the Green's function. This step is crucial to
obtain the large deviations rate functional. The product formula
for the Slater/Bergman determinant follows from the bosonization
formula of \cite{ABMNV,VV,Fal,F}.

To state the results, we need some further notation. We put
$\frac{\partial}{\partial z} = \frac{1}{2}
(\frac{\partial}{\partial x} - i \frac{\partial}{\partial y} ),
\frac{\partial}{\partial \bar{z}} = \frac{1}{2}
(\frac{\partial}{\partial x} + i \frac{\partial}{\partial y} ), $
and  $\partial f = \frac{\partial f}{\partial z} d z$;  similarly
for $\dbar f$. Then $\ddbar = \frac{\partial^2}{\partial z
\partial \bar{z}} d z \wedge d \bar{z}.$
We also put  $d =
\partial + \dbar, d^c := \frac{i}{4 \pi} (\dbar -
 \partial)$ and   $dd^c = \frac{i}{2\pi}
 \ddbar$. Thus, $dd^c f = \frac{ i}{8 \pi} \Delta f dz \wedge d\bar{z} = \frac{1}{4 \pi} \Delta f dx \wedge dy, $
where $\Delta$ is the standard Euclidean Laplacian, and
\begin{equation} \label{FUNDSOL} \Delta \left(\frac{1}{2 \pi} \log
|z|\right)  = \delta_0 \iff dd^c (2 \log |z|) = \delta_0 dx \wedge
dy.
\end{equation}

\subsection{\label{GRF} Green's function with respect to a
Hermitian metric}

Given a real $(1,1)$ form $\omega$,  we  define
 the Green's
function $G_{\omega}$ relative to $\omega$ to be  the unique
solution $G_{\omega}(z, \cdot) \in \dcal'(X)$ of
\begin{equation} \label{GHDEF} \left\{ \begin{array}{ll} (i) &  dd^c_w G_{\omega} (z,w)   =  \delta_z(w) -
\omega_w, \\ \\ (ii) & G_{\omega}(z,w) = G_{\omega}(w, z), \\ &
\\(iii) & \int_{X} G_{\omega}(z,w) \omega_w = 0, \end{array} \right.
\end{equation}
where the equality in the top line is in the sense of $(1,1)$
forms. We refer to \cite{ZZ} for background  (see the proof of
Proposition 10); uniqueness follows from condition (iii).

 We note that in \cite{ZZ} the Green's function was denoted $G_h$
 with respect to a Hermitian metric $h$ on $\ocal(1)$. But the
 Green's function depends only on the curvature $(1,1)$ form of
 $h$, so we denote it here by $G_{\omega}$.

The Green's potential of a measure in higher genus is defined
precisely as in genus zero in  \cite{ZZ}. The Green's potential of
a measure  is uniquely characterized as the solution of
\begin{equation*} \left\{ \begin{array}{l}
dd^c U^{\mu}_{\omega} = \mu - \omega; \\ \\
 \int_{X} U^{\mu}_{\omega} \omega = 0.
\end{array} \right. \end{equation*}

As  in the genus zero case of \cite{ZZ}, the Green's function may
be expressed in
 terms of local holomorphic coordinates and a local potential $\phi$ for $\omega$ (i.e.  $\omega= dd^c
 \phi$).
A more invariant expression for $G_{\omega}$  is in terms of the
prime form (see \S \ref{POINT}) for background).

\begin{prop} \label{GREEN}  Let $(X, \omega)$ be a compact Riemann surface equipped a $(1,1)$ form $\omega$.
Let $\ocal(w)$ be the point line bundle and let ${\bf
1}_{\ocal(w)}(z)$ be its canonical section. Also, let $h_{w}$ be
an $\omega$-admissible metric on $\ocal(w)$.
 Then the Green's
function $G_{\omega}$ is given by
$$\begin{array}{lll} G_{\omega}(z,w) & = &  \log ||{\bf 1}_{\ocal(w)} (z)||_{h(\omega,
w)}^2
  - \frac{1}{\int_X \omega} \int_X \log || {\bf 1}_{\ocal(w)}
(z)||^2_{h_w(z)} \omega_z \\ && \\
& = & \log ||E(z, w)||_{h(z) \boxtimes h(w)}.

\end{array}$$
\end{prop}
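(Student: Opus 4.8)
The plan is to take the middle expression as a candidate, verify the three defining properties (i)--(iii) of (\ref{GHDEF}), and then conclude by the uniqueness statement recorded after (\ref{GHDEF}). I would write
\[
F(z,w) := \log \|{\bf 1}_{\ocal(w)}(z)\|^2_{h_w} = \log \|E(z,w)\|^2_{h(z)\boxtimes h(w)},
\]
the second equality being the prime-form normalization fixed at the end of \S\ref{PRIMEFORM}, and set $c(w):=\int_X F(z,w)\,\omega_z$ (recall $\int_X\omega=1$ since $\deg\ocal(w)=1$, which is why the factor $1/\int_X\omega$ is harmless). The candidate is then $g(z,w):=F(z,w)-c(w)$, i.e.\ the middle expression in the statement.

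First I would establish (i) by Poincar\'e--Lelong. As a function of $z$, ${\bf 1}_{\ocal(w)}(z)$ is a holomorphic section of $\ocal(w)$ vanishing to first order exactly at $z=w$, and since $h_w$ is $\omega$-admissible its curvature form is $\omega$. Combining the curvature convention (\ref{curvature})--(\ref{DDCPHI}) with the fundamental-solution identity (\ref{FUNDSOL}) gives
\[
dd^c_z F(z,w) = \delta_w(z) - \omega_z .
\]
Because $E(z,w)=-E(w,z)$ and the product metric $h(z)\boxtimes h(w)$ is symmetric, $F$ is symmetric, $F(z,w)=F(w,z)$, so the same identity holds with $z$ and $w$ interchanged. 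As $c(w)$ does not depend on $z$, applying $dd^c_w$ to $g$ yields $dd^c_w g(z,w)=\delta_z(w)-\omega_w$, which is (i).

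The crux is the symmetry (ii), and I expect this to be the main obstacle: it reduces to showing $c(w)$ is constant. Using the symmetry of $F$ and differentiating under the integral,
\[
dd^c_w\, c(w) = \int_X dd^c_w F(w,z)\,\omega_z = \int_X \big(\delta_z(w)-\omega_w\big)\,\omega_z = \omega_w - \omega_w = 0,
\]
where $\int_X \delta_z(w)\,\omega_z=\omega_w$ follows by testing against a smooth function and using Fubini. Thus $c$ is a continuous function on the compact surface $X$ with $dd^c c=0$ weakly; by Weyl's lemma it is smooth and harmonic, hence constant by the maximum principle. Writing $c(w)\equiv c_0$, the symmetry of $g=F-c_0$ is immediate from that of $F$, giving (ii).

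Finally, (iii) is then automatic:
\[
\int_X g(z,w)\,\omega_w = \int_X F(z,w)\,\omega_w - c_0 = \int_X F(w,z)\,\omega_w - c_0 = c_0 - c_0 = 0 ,
\]
using $F(z,w)=F(w,z)$ and the definition of $c_0$. By uniqueness of the solution of (\ref{GHDEF}), $g=G_\omega$, which is the first equality. The second equality is just the prime-form normalization: the additive constant $c_0$ may be absorbed into the choice of Hermitian metric on $\ocal(D)\to X\times X$ pulled back in \S\ref{PRIMEFORM}, namely the one making $\int_X \log\|E(z,w)\|^2_{h\boxtimes h}\,\omega_z=0$, so that $G_\omega(z,w)=\log\|E(z,w)\|^2_{h(z)\boxtimes h(w)}$ is the normalized prime-form expression of the statement.
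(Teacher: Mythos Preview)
Your argument follows essentially the paper's own route — apply Poincar\'e--Lelong to the candidate, match with the defining properties (\ref{GHDEF}), and invoke uniqueness — but there is one logical slip in the ordering. In your first paragraph you write ``as $c(w)$ does not depend on $z$, applying $dd^c_w$ to $g$ yields $dd^c_w g=\delta_z-\omega_w$'': this is a non sequitur, since independence of $c$ from $z$ says nothing about $dd^c_w c(w)$. What you actually need, and do prove in the next paragraph, is that $dd^c_w c=0$. So reorder: first record $dd^c_z g=dd^c_z F=\delta_w-\omega_z$ (here the independence of $c$ from $z$ is the relevant fact), then compute $dd^c_w c=0$ and deduce $c\equiv c_0$, hence $g=F-c_0$ is symmetric, and only then conclude (i) in the $w$-variable from symmetry together with the $z$-variable identity.

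One genuine difference from the paper: your direct verification of symmetry via $E(z,w)=-E(w,z)$ and the product metric $h\boxtimes h$ is more than the paper attempts — its Remark (i) after the proof concedes that symmetry is ``not obvious from this formula'' and falls back on the uniqueness of solutions to (i) and (iii). Your route buys an explicit reason why the right-hand side is symmetric, at the cost of committing to the prime-form normalization of the admissible metrics (which is fine, since that normalization is the one fixed in \S\ref{PRIMEFORM}).
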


\begin{proof}

We apply $ i \ddbar = 4 dd^c$ in the $z$ variable to both sides of
the formula.
 On the left, we obtain $\delta_w(z) \omega - \frac{1}{A} \omega$,
where $A = \int_X \omega$. On the right side, we write ${\bf
1}_{\ocal (w)} = (z - w) e$ relative to  a local holomorphic
function $e$ of $\ocal(w)$ near $w$. Then in the $z$ variable,
$$dd^c \log ||{\bf 1}_{\ocal(w)} (z)||_{h(\omega, w)}^2 =
\delta_w(z) - dd^c ||e(z)||_{h(\omega, w)}^2 = \delta_w(z) -
\omega,
$$ since  $\phi_w(z) = \log ||e(z)||_{h(\omega, w)}^2$ is a potential for
$\omega$. Note that  $\omega_{h(z; \omega, w)} = \frac{1}{A}
\omega$, since $\omega_{h(z; \omega, w)} $ is the curvature of a
line bundle of Chern class one and is harmonic with respect to
$\omega$. Hence after taking $dd^c$ the integral vanishes and  we
get $\delta_w(z) \omega - \frac{1}{A} \omega$ on the right side.

It follows that $dd^c_z \left(G_{\omega}(z,w) - \log ||{\bf
1}_{\ocal(w)} (z)||_{h(\omega, w)}^2 \right) = 0$ and since both
are globally well-defined their difference must be a constant,
possibly depending on $w$. The constant  is determined by the
condition that $\int_X G(z,w) \omega_w(z) = 0$ for all $w$.

\end{proof}

\begin{rem}  (i) It is not obvious that $G(z,w) = G(w, z)$ from this
formula, but this must be the case since the Green's function
satisfying (i) and (iii) is unique.
\medskip

(ii)  The admissible Hermitian metric is only defined up to a
multiplicative constant. However, the right side of the formula is
independent of the choice of the constant. \medskip

(iii) Since the genus $g \geq  1$, we may also  express $X =
\tilde{X}/\Gamma$ where $\tilde{X}$ is the universal holomorphic
cover. Then $G$ is a $\Gamma$-invariant solution of (i) - (ii) on
$\tilde{X}$, and (iii) holds for the integral over a fundamental
domain $\fcal$ for $\Gamma$. Indeed, $dd^c (G_{\omega} - 2 \log |z
- w|) = \omega$ where derivatives are taken in either the $z$ or
$w$ variable, and so $G_{\omega} - 2 \log |z - w|$ is a smooth
potential for $\omega$. If we subtract $\phi(z) + \phi(w)$, then
$dd^c$ of the result equals zero in each variable. Hence the
difference must be a constant and the constant is determined by
(iii).\end{rem}

\subsection{Mean value of Green's function}

Henceforth, we define
\begin{equation} \label{rho} \rho_{\omega}(w): = \frac{1}{\int_X \omega} \int_X \log || {\bf 1}_{\ocal(w)}
(z)||^2_{h_w(z)} \omega_z. \end{equation}

Let us relate this to the analogous `constant' when $g = 0$. In
\cite{ZZ}, we defined $ E(h): =  \left( \int_{\CP^1} \phi(z)
\omega_h + 4\pi \rho_{\phi}(\infty) \right),$ where  $\rho_{\phi}$
was  a certain Robin constant. In Lemma 8  of \cite{ZZ} we showed
that in each trivializing affine chart of $\ocal(1) \to \CP^1$,
and relative to the frame $e(z)$ over the affine chart $\C$ in
which
 $h = e^{- \phi}$ and $\omega_h = dd^c \phi$, the Green's function has the local expression,
$
    G_{j}(z,w) = 2 \log
|z - w| - \phi_j(z) - \phi_j(w) +  E(h),$  and $\int_{\C} G_j(z,w)
dd^c \phi_j = 0$. In particular, we showed that $ \int_{\CP^1}
\log || z - w||^2_{h_z \boxtimes h_w} \omega_h $ is a constant in
$z$, equal to $E(h)$.

In higher genus, we also have $ \log || {\bf 1}_{\ocal(w)}
(z)||^2_{h_w(z)} = \log |z - w| - \phi(z) - \phi(w)$ relative to
the canonical frame of $\ocal(w)$. We could use Green's formula on
a fundamental domain $\fcal$   to simplify the formula for
$\rho_{\omega}(w)$ and also express it in terms of the potential.
But  we do not need the expression, so we leave the details to the
reader.

\subsection{Norms of canonical sections and Green's functions}

We now tie together the Vieta maps with the Green's potentials of
the discrete measures
 $\mu = \mu_{\zeta}$.

\begin{lem} \label{NORMSUPHI}  Let $h$ be an $\omega$-admissible
metric. Then with $\rho_{\omega}$ as in (\ref{rho}),

\begin{itemize}

\item (i)\; $ \frac{1}{N} \log ||{\bf 1}_{\zeta_1 + \cdots +
\zeta_N}(z)||_{h^N} - \frac{1}{N} \int_{X} \log ||{\bf 1}_{\zeta_1
+ \cdots +
\zeta_N}||_{h^N}^2 \omega = U_{\omega}^{\mu_{\zeta}}(z). $ Hence,\\
\\$ ||{\bf 1}_{\zeta_1 + \cdots +
\zeta_N}(z)||_{h^N}^{\frac{1}{N}} e^{ - \frac{1}{N} \int_{X} \log
||{\bf 1}_{\zeta_1 + \cdots + \zeta_N}||_{h^N}^2 \omega} =
e^{U_{\omega}^{\mu_{\zeta}}}. $\\

\item (ii) \; $   \int_X \log ||{\bf 1}_{\zeta_1 + \cdots +
\zeta_N} (w)||_{h^N}^2  \omega = \sum_{j = 1}^N
\rho_{\omega}(\zeta_j). $\\

\item (iii)\; $ \frac{1}{N} \log ||S_\zeta(z)||_{h^N} -
\frac{1}{N} \int_{X} \log ||S_\zeta||_{h^N}^2 \omega =
U_{\omega}^{\mu_{\zeta}}(z) + \frac{1}{N}
U_{\omega}^{\mu_{P(\zeta)}}(z). $ Hence,\\ \\$
||S_\zeta(z)||_{h^N}^{\frac{1}{N}} e^{ - \frac{1}{N} \int_{X} \log
||S_\zeta||_{h^N}^2 \omega_h} = e^{U_{h}^{\mu_{\zeta}} +
\frac{1}{N}
U_{\omega}^{\mu_{P(\zeta)}}}. $\\

\item (iv) \; $   \int_X \log ||S_{\zeta} (w)||_{h^N}^2  \omega_h
= \sum_{j = 1}^N \rho_{\omega}(\zeta_j) + \sum_{j = 1}^g
\rho_{\omega}(P_j(\zeta). $

\end{itemize}
\end{lem}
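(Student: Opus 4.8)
The plan is to reduce all four identities to Proposition \ref{GREEN} combined with the multiplicativity of the canonical sections. Recall from Proposition \ref{GREEN} that, since $\int_X \omega = 1$,
$$ G_\omega(z,w) = \log \|{\bf 1}_{\ocal(w)}(z)\|_{h_w(z)}^2 - \rho_\omega(w), \qquad \rho_\omega(w) = \int_X \log\|{\bf 1}_{\ocal(w)}(z)\|^2_{h_w(z)}\,\omega_z, $$
so that, up to the additive constant $\rho_\omega(w)$, each value of the Green's function is the log-norm-square of a canonical section. I read every log-norm below as the log of the squared norm, consistent with (ii) and with Proposition \ref{GREEN}; the unsquared factors appearing in the ``Hence'' lines then follow simply by exponentiating.

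For (i) and (ii) I would expand the canonical section using Definition \ref{CANONICALSECTION}, writing $\|{\bf 1}_{\zeta_1+\cdots+\zeta_N}(z)\|^2 = \prod_{j=1}^N \|{\bf 1}_{\ocal(\zeta_j)}(z)\|_h^2$, so that $\log\|{\bf 1}_{\zeta_1+\cdots+\zeta_N}(z)\|^2 = \sum_{j=1}^N \log\|{\bf 1}_{\ocal(\zeta_j)}(z)\|_h^2$. Integrating against $\omega$ and using the definition of $\rho_\omega$ gives (ii) at once. Subtracting $\tfrac1N$ of that integral from $\tfrac1N$ of the log-norm and applying Proposition \ref{GREEN} term by term turns the left-hand side into $\tfrac1N\sum_{j=1}^N G_\omega(z,\zeta_j)$, which is exactly $U_\omega^{\mu_\zeta}(z)$ by the definition of the Green's potential applied to $\mu_\zeta = \tfrac1N\sum_j \delta_{\zeta_j}$; this proves (i).

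Parts (iii) and (iv) are the same computation carrying the one extra canonical factor in Definition \ref{SZETA}, namely $S_\zeta = \prod_{j=1}^g {\bf 1}_{\ocal(P_j)}\cdot \prod_{j=1}^N {\bf 1}_{\ocal(\zeta_j)}$ with $P_1+\cdots+P_g = A_{\lcal_{N+g}}(\zeta_1+\cdots+\zeta_N)$. Taking log-norm-squares splits $S_\zeta$ into the $\zeta$-part already handled and a $P$-part $\sum_{j=1}^g \log\|{\bf 1}_{\ocal(P_j)}(z)\|_h^2$. Integrating the latter against $\omega$ produces $\sum_{j=1}^g \rho_\omega(P_j)$, which together with (ii) yields (iv); and $\tfrac1N$ of the $P$-part minus its average equals $\tfrac1N\sum_{j=1}^g G_\omega(z,P_j) = \tfrac1N U_\omega^{\mu_{P(\zeta)}}(z)$ by Proposition \ref{GREEN} and the definition $\mu_{P(\zeta)} = \sum_{j=1}^g \delta_{P_j}$. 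Adding the two contributions gives (iii).

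There is no serious obstacle: the entire content is the additivity of $\log\|\cdot\|^2$ over the product-of-point-bundles structure of the canonical sections, fed into Proposition \ref{GREEN}. The only matters requiring care are bookkeeping ones: that the normalization $\int_X\omega=1$ is what pins down the harmonic curvature term so that $\rho_\omega$ comes out with the stated constant; that the squared-norm convention is used uniformly so the identities are dimensionally consistent; and that $P_1+\cdots+P_g$ depends on $\zeta$ through the Abel--Jacobi map $A_{\lcal_{N+g}}$, so that $\mu_{P(\zeta)}$, and hence (iii)--(iv), are defined only off the Wirtinger locus $X^{(N)}_{N+g}$ --- precisely the locus where $S_\zeta$ itself is defined.
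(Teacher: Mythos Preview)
Your proposal is correct and follows essentially the same approach as the paper: both reduce everything to Proposition \ref{GREEN} together with the additivity of $\log\|\cdot\|^2$ over the product structure of the canonical sections. The only cosmetic difference is that the paper phrases the computation for (i) and (iii) via the $dd^c$-characterization of $G_\omega$ and an integration by parts against $d\mu_\zeta = dd^c\tfrac1N\log\|{\bf 1}_{\zeta_1+\cdots+\zeta_N}\|^2 + \omega$, whereas you invoke the pointwise formula $G_\omega(z,w)=\log\|{\bf 1}_{\ocal(w)}(z)\|^2 - \rho_\omega(w)$ from Proposition \ref{GREEN} directly; these are two readings of the same identity.
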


\begin{proof}  Since $\log ||{\bf 1}_{\zeta_1 + \cdots
+ \zeta_N}||_{h^N}^2 = \sum_{j = 1}^N \log ||{\bf 1}_{\zeta_j}
||_{h}^2, $ it follows from  Proposition \ref{GREEN} that
$$d\mu_{\zeta} = dd^c \frac{1}{N}  \log ||{\bf 1}_{\zeta_1 + \cdots
+ \zeta_N}||_{h^N}^2 + \omega, $$ and that
$$d\mu_{\zeta} + \frac{1}{N} d\mu_{P(\zeta)} = dd^c \frac{1}{N}  \log ||{\bf 1}_{\zeta_1 + \cdots
+ \zeta_N} {\bf 1}_{P_1 + \cdots + P_g}||_{h^{N + g}}^2 + (1 +
\frac{g}{N}) \omega,
$$
it follows by integration against $G_{\omega}(z,w)$ that
$$\begin{array}{lll}    U_{\omega}^{\mu_{\zeta}}(z) &: =&
    \int_{X} G_{\omega}(z,w) d\mu_{\zeta}(w)\\ && \\ & = &
\int_{X} G_{\omega}(z,w)  \left( dd^c \frac{1}{N}  \log ||{\bf
1}_{\zeta_1 + \cdots + \zeta_N}||_{h^N}^2 + \omega \right) \\ && \\
& = & \int_{X} G_{\omega}(z,w) \left( dd^c \frac{1}{N}  \log
||{\bf 1}_{\zeta_1 + \cdots
+ \zeta_N}||_{h^N}^2  \right) \\ && \\
& = & \frac{1}{N} \log ||{\bf 1}_{\zeta_1 + \cdots +
\zeta_N}||_{h^N}^2(z) - \frac{1}{N} \int_{X} \log ||{\bf
1}_{\zeta_1 + \cdots + \zeta_N}||_{h^N}^2(z) \omega.
\end{array}$$

\end{proof}

\subsection{\label{BOSON} Bosonization formulae}

Bosonization formulae equate the determinantal correlation
functions of a free fermionic field theory in two dimensions to
the symmetric  correlation functions of a corresponding bosonic
field theory. For our purposes, the important point is that the
bosonization formulae express Slater or Bergman determinants in
terms of products of prime forms (or exponentials of Green's
functions). We give further background in the Appendix \S
\ref{APPENDIX}.

With some adjustments to make the notation consistent with this
article, formula (5.4) of \cite{ABMNV} states the following:

\begin{maintheo} \label{BOSONa} Let $L \to X $ be the line bundle of degree $d$  of the divisor
$\sum_{j = 1}^d a_j$. Let $\{\psi_j\}_{j = 1}^p$ be a basis of
$H^0(X, L)$ where $p = \dim H^0(X, L) = d + 1 - g$.  Then there
exists a constant $A_N(\omega, g)$   such that

\begin{equation}  \begin{array}{ll}  \left| \det
\begin{pmatrix}
\psi_1(\zeta_1) & \cdots & \psi_{p } (\zeta_1)  \\  & \\
\;\psi_1(\zeta_p) & \cdots & \psi_{p } (\zeta_p)  \end{pmatrix}
\right|^2 & = A_N(g, \omega) \;\;\prod_{i < j} |E(\zeta_i,
\zeta_j)|^2\\ & \\ & \cdot |\det (\langle \psi_i, \psi_j
\rangle)|^2 \ncal(L \otimes \ocal(- \sum_{j = 1}^p \zeta_j)
\otimes L_{\acal}^{-1} )
\end{array}\end{equation}

\end{maintheo}

   The constant $A(g, \omega)$ is a quite complicated
constant,  independent of $\{\zeta_j\}$ and the choice of
$\{\psi_k\}$, involving a ratio of determinants of two Laplacians
multiplied by some extra factors involving  $\theta$-functions, a
metric $g$ on $X$ and spin structures. Also, $L_{\acal}$ is a spin
structure (a bundle of degree $g - 1$) depending on a choice of
homology basis $\acal$ of $X$, and $\ncal(L \otimes \ocal(- \sum_j
\zeta_j) \otimes L_{\acal}^{-1})$ is defined by
\begin{equation} \label{NCAL} \ncal_{\acal}(z) = e^{- 4 \pi i \langle Y y, y \rangle}
|\vartheta( z | \tau)|^2,
\end{equation}
where $z$ is the divisor class $z = L \otimes \ocal(- \sum_j
\zeta_j) \otimes L_{\acal}^{-1}$. In addition, $\tau$ is the
period matrix  and $Y$ is the matrix of inner products of the
holomorphic one forms (we refer to \cite{ABMNV} for further
details).

The $\vartheta$-factor is quite important since it cancels the
extra  zeros. The rest of the  details of the formula are mostly
irrelevant for our purposes, since they are of lower logarithmic
order than the LDP rate function.

We apply the bosonization formula to the line bundle $E_{N + g -
1}$ of (\ref{EDEF}) of degree $d = N + g -1$. Hence $p = N$ and
the relevant theta function is
$$\theta ((N + g -1) P_0 - \sum_{j = 1}^N \zeta_j - \Delta). $$
Here we suppress $\tau$ since the complex structure is fixed in
our discussion.

\subsection{\label{JPCCALCULATIONIIsub} Completion of the
proof of II of Theorem \ref{JPDHG}}

To state the next result, we need some further notation.
 Motivated by formula I of Theorem \ref{JPDHG} and by  the detailed
bosonization formula (see \S \ref{BOSON}, Theorem \ref{BOSONa},
and the Appendix \S \ref{APPENDIX}), we put
\begin{equation} \label{FNDEF} \begin{array}{lll} F_N(\zeta_1, \dots, \zeta_N) : & = &
A(N, g)   e^{ -(N + 1) \sum_{j = 1}^g \rho_{\omega}(P_j(\zeta))}
 \fcal_N(\zeta_1, \dots, \zeta_N, P_0) \left| \prod_{k = 1}^N {\bf 1}_{P_0}(\zeta_k) \right|^{-2}  \\
&& \\ &&  \left |\left(\prod_{k = 1}^N \; \prod_{j = 1}^g E(P_j,
\zeta_k)\right) \right|^2 \left|  \left(\; \theta ((N + g - 1) P_0
- \sum_{i = 1}^N \zeta_i - \Delta)\right)\right|^{-2},
\end{array}
\end{equation}
where $\fcal_N$ is defined in (\ref{FCALNDEF}). Here we use that
$|\det (\langle \psi_i, \psi_j \rangle)|^2 = 1$. We view $\fcal_N$
as defined on $X^{(N)}  \backslash X^{(N)}_{N + g}$ or on extended
configuration space $\tilde{X}^{(N)}$. In the latter case we write
$F(\zeta_1 + \cdots + \zeta_N, P_1 + \cdots + P_g)$.

\begin{lem} \label{JPDINV}
    Let $h = e^{- \phi}$ be a
    smooth Hermitian metric on $\ocal(P_0)$, and let
    $\omega_h, G_{\omega}$ be as above.
    Then, the  joint probability current for the projective linear ensemble  is given by:
$$\begin{array}{lll}   \vec K^N(\zeta_1, \dots, \zeta_N) & = &
 \frac{1}{Z_N(\omega)}
    \frac{\exp \left(\frac{1}{2} \sum_{i \not=
j} G_{\omega}(\zeta_i, \zeta_j) \right) }{\left(\int_{X} e^{ N
\int_{X} G_{\omega}(z,w) d\mu_{\zeta}(w)} e^{ \int_{X}
G_{\omega}(z,w) d\mu_{P(\zeta)}(w)} d\nu(z) \right)^{N+1}} \\ &&
\\ && \; \cdot  F_N(\zeta_1, \dots, \zeta_N)  e^{- 2 N \int
\rho_{\omega}(w) d\mu_{\zeta}(w)},
\end{array}$$
where $\rho_{\omega}$ is defined in (\ref{rho}), where $F_N$ is
defined in (\ref{FNDEF}), and where $Z_N$ is a normalizing
constant
 (see \S \ref{APPENDIX}).

\end{lem}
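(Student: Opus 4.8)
The plan is to start from formula (I) of Theorem \ref{JPDHG}, which is already established, and to transform each of its three constituents---the Bergman determinant $\det(B_N(\zeta_j,\zeta_k))$, the products of prime forms in the numerator, and the $L^2$-integral in the denominator---into the Green's-function expressions appearing in the statement, collecting all $\zeta$-independent and lower-order factors into the normalizing constant $Z_N(\omega)$ and the function $F_N$ of (\ref{FNDEF}).

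First I would dispose of the Bergman determinant. By Corollary \ref{SLATEREa} the conditional determinant $\det(B_N(\zeta_j,\zeta_k))$ equals $|\prod_k {\bf 1}_{P_0}(\zeta_k)|^2$ times the genuine Bergman determinant $\det(B_{E_{N+g-1}}(\zeta_j,\zeta_k))$ of the degree $N+g-1$ bundle. To the latter I apply Lemma \ref{SLATERa}, writing it as a ratio of a Slater determinant and a Gram determinant, followed by the bosonization identity of Theorem \ref{BOSONa} with $L=E_{N+g-1}$ and $p=N$. Since the chosen basis is orthonormal the Gram determinants are $1$, and bosonization expresses the Slater determinant as the constant $A(N,g)$ times $\prod_{i<j}|E(\zeta_i,\zeta_j)|^2$ times the theta factor $|\theta((N+g-1)P_0-\sum_i\zeta_i-\Delta)|^2$. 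The pieces $|\prod_k{\bf 1}_{P_0}(\zeta_k)|^{-2}$, $A(N,g)$ and $|\theta(\cdots)|^{-2}$ then land, together with $\fcal_N$ from (\ref{FCALNDEF}) and the cross-products $\prod_k\prod_j E(P_j,\zeta_k)$ carried over verbatim from the numerator of (I), in the definition (\ref{FNDEF}) of $F_N$.

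Next I combine the remaining prime forms with the Green's function. After cancelling the bosonization factor $\prod_{i<j}|E(\zeta_i,\zeta_j)|^2$ against the numerator product $\prod_{k}\prod_{j:k\neq j}|E(\zeta_j,\zeta_k)|^2=\big(\prod_{i<j}|E(\zeta_i,\zeta_j)|^2\big)^2$ (using symmetry of the norm of the prime form), exactly one factor $\prod_{i<j}|E(\zeta_i,\zeta_j)|^2$ survives. Proposition \ref{GREEN} identifies its logarithm with $\frac{1}{2}\sum_{i\neq j}G_\omega(\zeta_i,\zeta_j)$ up to the mean-value constants $\rho_\omega$ of (\ref{rho}); since each index occurs in $N-1$ pairs, the accumulated constants contribute $e^{(N-1)\sum_j\rho_\omega(\zeta_j)}$. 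For the denominator I rewrite the integrand $|\prod_j E(P_j,z)|^2_{h_g}|\prod_j E(\zeta_j,z)|^2_{h^N}=\|S_\zeta(z)\|^2_{h^N}$ and invoke Lemma \ref{NORMSUPHI}(iii)--(iv), which turns it into $e^{N U_\omega^{\mu_\zeta}(z)+U_\omega^{\mu_{P(\zeta)}}(z)}$ times the constant $e^{\sum_j\rho_\omega(\zeta_j)+\sum_j\rho_\omega(P_j)}$; raising to the power $N+1$ reproduces exactly the denominator of the statement and deposits $e^{-(N+1)\sum_j\rho_\omega(P_j)}$ into $F_N$ and $e^{-(N+1)\sum_j\rho_\omega(\zeta_j)}$ into the main factor.

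The main obstacle, and the step demanding care, is the final bookkeeping of the $\rho_\omega$-constants: combining the $+(N-1)\sum_j\rho_\omega(\zeta_j)$ produced by converting the prime forms with the $-(N+1)\sum_j\rho_\omega(\zeta_j)$ produced by the denominator gives precisely $-2\sum_j\rho_\omega(\zeta_j)=-2N\int_X\rho_\omega\,d\mu_\zeta$, the asserted factor. One must likewise verify that all local frame and metric weights $e^{-\phi}$ cancel between the (frame-independent) ratio in (I) and the explicitly metrized $L^2$-integral, so that the surviving expression is the well-defined volume form claimed, with the factor $\prod_j d\zeta_j\wedge d\bar\zeta_j$ inherited from the higher-genus Vandermonde of Proposition \ref{BASICFORMULA}. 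The $P_j$-indexed constants together with the theta and combinatorial constants, being $\zeta$-independent or of lower logarithmic order, are absorbed into $F_N$ and $Z_N(\omega)$, completing the identification.
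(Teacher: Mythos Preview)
Your proposal is correct and follows essentially the same route as the paper: start from expression (I) of Theorem \ref{JPDHG}, apply the bosonization formula of Theorem \ref{BOSONa} (via Corollary \ref{SLATEREa}) to the Bergman determinant, cancel one factor of $\prod_{i<j}|E(\zeta_i,\zeta_j)|^2$, convert the surviving prime-form product and the $L^2$ denominator to Green's functions using Proposition \ref{GREEN} and Lemma \ref{NORMSUPHI}, and then do exactly the $\rho_\omega$ bookkeeping you describe, with the key cancellation $(N-1)-(N+1)=-2$ yielding the factor $e^{-2N\int\rho_\omega\,d\mu_\zeta}$. Your identification of which leftover factors land in $F_N$ matches (\ref{FNDEF}) term by term.
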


\begin{proof}

We first rewrite  the numerator \begin{equation} \label{NUM}
\frac{ \prod_{k = 1}^N \left| \; \prod_{j = 1}^g E(P_j, \zeta_k)
\cdot \prod_{j: k \not= j}
  E(\zeta_j, \zeta_k)\right|^2}{\det \left(B_N (\zeta_j, \zeta_k) \right)_{j, k =
  1}^N} \end{equation}
of the expression I in Theorem \ref{JPDHG} in terms of the Green's
function.  It is a product of three factors:
\begin{itemize}

\item (i)\; $\prod_{k = 1}^N \left| \;    \prod_{j: k \not= j}
E(\zeta_j, \zeta_k)\right|$;
\medskip

\item (ii) \; $\prod_{k = 1}^N \left| \; \prod_{j = 1}^g E(P_j,
\zeta_k) \right|^2$;
\medskip

\item (iii) \; $(\det \left(B_N (\zeta_j, \zeta_k) \right)_{j, k =
  1}^N)^{-1}$.

\end{itemize}

By Proposition \ref{GREEN} and Lemma \ref{NORMSUPHI},  (i) equals

  \begin{equation} \label{EG} \left|\prod_{j: k \not= j = 1}^N \;
 {\bf 1}_{\ocal(\zeta_j)}(\zeta_k))\right|^2 =  \exp \left( \sum_{i < j}^N G_{\omega}(\zeta_i,
\zeta_j) \right) \exp\left(   -
 (N - 1) \sum_{j = 1}^N \rho_{\omega}(\zeta_j) \right). \end{equation}
 Hence,
\begin{equation}\begin{array}{lll} 2  \sum_{i < j}^N\; \log \;\left|{\bf 1}_{\ocal(\zeta_j)}(\zeta_k))\right|^2 & = &  \sum_{i < j}^N G_{h}( \zeta_i,  \zeta_j)
   +
 (N - 1) \sum_{j = 1}^N \rho_{\omega}(\zeta_j))\\&& \\
& = & N^2 \int_{X \times X \backslash D}   G_{h}(z,
w)d\mu_{\zeta}(z) d\mu_{\zeta}(w) - N (N - 1) \int
\rho_{\omega}(w) d\mu_{\zeta}(w).
\end{array}
\end{equation}

To express (iii) in terms of Green's functions, we use the
bosonization formula (\ref{BOSON}) for the  Bergman determinant or
of  the Hermitian line bundle $E = E_{N + g -1}$. It gives:
\begin{equation} \begin{array}{lll} \left|\det \begin{pmatrix}
\psi_1^E(\zeta_1) & \cdots & \psi^E_{N } (\zeta_1)  \\  & \\
\;\psi_1^E(\zeta_N) & \cdots & \psi^E_{N } (\zeta_N)
\end{pmatrix} \right|^2 & = A_N(g, \omega) \;\;\prod_{i < j}
|E(\zeta_i, \zeta_j)|^2\\ & \\ & \cdot |\det (\langle \psi^E_i,
\psi^E_j \rangle)|^2 \ncal(E_{N + g -1} \otimes \ocal(- \sum_{j =
1}^N \zeta_j) \otimes L_{\acal}^{-1} )
\end{array}\end{equation}
Here, by (\ref{NCAL}),  up to an $N$-independent positive
constant,
$$\ncal(E_{N + g -1} \otimes \ocal(- \sum_{j =
1}^N \zeta_j) \otimes L_{\acal}^{-1} ) = || \theta ((N + g - 1)
P_0 - \sum_{i = 1}^N \zeta_i - \Delta)||^2. $$

Since
$$|\det \begin{pmatrix} \psi_n(\zeta_j) \end{pmatrix}|^2 = \left| \prod_{k = 1}^N {\bf 1}_{P_0}(\zeta_k) \right|^2
\det \left(B_{E_{N + g -1}}(\zeta_j, \zeta_k) \right)_{j, k =
1}^N,
 $$
we need to multiply the product side of the bosonization formula
by $\left| \prod_{k = 1}^N {\bf 1}_{P_0}(\zeta_k) \right|^{2}$.

Again using  Proposition \ref{GREEN} and Lemma \ref{NORMSUPHI}, we
then convert the denominator

$$\left(\int_X \left| \prod_{j = 1}^g E (P_j, z) \right|_{h_g}^2
\cdot \left| \prod_{j = 1}^N E (\zeta_j, z)\right|^2_{h^N} d\nu(z)
\right)^{- N - 1}$$ of  expression I in Theorem \ref{JPDHG} into
the Green's function expression by the identities
\begin{equation} \label{GRID}
\begin{array}{l}\int_X   e^{N \int_X \log  \left|{\bf 1}_{\ocal(w)}(z)\right|^2_{h_w} d\mu_{\zeta}(w)}
e^{\int_X \log  \left|{\bf 1}_{\ocal(w)}(z)\right|^2_{h_w}
d\mu_{P(\zeta)(w)}}
   d\nu(z)
\\ \\
= \left( \int_{X} e^{N \int_{X} G_{h}(z,w) d\mu_{\zeta}(w)}  e^{
\int_{X} G_{h}(z,w) d\mu_{P(\zeta)}(w)} d\nu(z) \right) e^{ N \int
\rho_{\omega}(w) d\mu_{\zeta}(w)   + \sum_{j = 1}^g
\rho_{\omega}(P_j(\zeta))}
\end{array} \end{equation}

We now raise the denominator (\ref{GRID}) to the power $ (N + 1)$
and combine with the numerator calculation so that the full
expression I of Theorem \ref{JPDHG} becomes
$$\frac{\exp \left( \sum_{i < j}
G_{h}(\zeta_i, \zeta_j) \right) }{\left(\int_{X} e^{ N \int_{X}
G_{h}(z,w) d\mu_{\zeta}(w) } e^{  \int_{X} G_{h}(z,w)
d\mu_{P(\zeta)}(w) } d\nu(z) \right)^{N+1}} \;\; F_N
$$ multiplied by the exponential of
$$ \begin{array}{l}  (N - 1) N \int \rho_{\omega} d\mu_{\zeta}-
  N (N + 1)  (\int \rho_{\omega}(w) d\mu_{\zeta}).
\end{array}$$
We  note the   cancellation in the $N^2$ term of $\int
\rho_{\omega} d\mu_{\zeta}$, leaving $- 2 N \int \rho_{\omega}
d\mu_{\zeta}$.  This gives the stated result.

\end{proof}

\subsection{\label{MOREZEROS} Analysis of zeros}

We now update \S \ref{BERGZEROS} to take into account the above
results using bosonization. We first re-consider Lemma
\ref{ALLZEROSCANCELLED} in the light of the bosonization formula.

\begin{lem} \label{ALLZEROSCANCELLEDb}  $F_N$ is holomorphic on $\tilde{X}^{(N)} $.  That is, $$\left(\prod_{k = 1}^N \; \prod_{j =
1}^g E(P_j, \zeta_k)\right)   \left(\; \theta ((N + g - 1) P_0 -
\sum_{i = 1}^N \zeta_i - \Delta)\right)^{-1}$$ has no poles.
Moreover, the same is true on extended configuration space
$\tilde{X}^{(N)}$.
\end{lem}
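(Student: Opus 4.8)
The plan is to reduce the claim to the zero count already carried out in Lemma \ref{ALLZEROSCANCELLED}, using the bosonization formula (Theorem \ref{BOSONa}) together with Riemann's vanishing theorem to pin down where the theta factor vanishes. First I would note that the prime form $E(z,w)$ is a holomorphic section vanishing only on the diagonal, so every factor $E(P_j,\zeta_k)$ in the numerator is holomorphic on $\tilde{X}^{(N)}$. Consequently the only possible poles of $\left(\prod_{k=1}^N\prod_{j=1}^g E(P_j,\zeta_k)\right)\left(\theta((N+g-1)P_0-\sum_{i=1}^N\zeta_i-\Delta)\right)^{-1}$ come from the zeros of the theta factor, and it suffices to show that the numerator vanishes along the zero divisor of this theta function to at least the same order.

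Next I would locate that zero divisor. By Riemann's vanishing theorem (\S\ref{THETASTUFF}), $\theta((N+g-1)P_0-\sum_i\zeta_i-\Delta)=0$ exactly when $(N+g-1)P_0-\sum_i\zeta_i$ is linearly equivalent to an effective divisor of degree $g-1$, i.e. precisely on the locus $X^{(N)}_{N+g-1}$ of Lemma \ref{BERGZEROSLEM}. The bosonization identity of Theorem \ref{BOSONa}, applied to $E_{N+g-1}$, then shows this vanishing is simple off the diagonals: there the factor $\ncal$ accounts for exactly the simple vanishing of the Slater/Bergman determinant $\det(B_{E_{N+g-1}}(\zeta_j,\zeta_k))$ along $X^{(N)}_{N+g-1}$.

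The key step is to invoke Lemma \ref{ALLZEROSCANCELLED}. A point of $X^{(N)}_{N+g-1}$ has complementary divisor $P_1+\cdots+P_g\equiv P_0+Q_1+\cdots+Q_{g-1}$ with $\dim H^0(X,\ocal(Q_1+\cdots+Q_{g-1}))=1$, so the representation is rigid and the canonical factor $\prod_j {\bf 1}_{\ocal(P_j)}$ degenerates exactly as analyzed there. Lemma \ref{ALLZEROSCANCELLED} shows that $\prod_{k}\prod_j {\bf 1}_{\ocal(P_j)}(\zeta_k)$ --- whose local expression is $\prod_{k,j}E(P_j,\zeta_k)$ --- vanishes along $X^{(N)}_{N+g-1}$, this being the very mechanism by which the numerator of expression (I) cancelled the extra zeros of the Bergman determinant. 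Matching this simple zero of the numerator against the simple zero of $\theta$ from the previous step shows that the quotient extends holomorphically across $X^{(N)}_{N+g-1}$, which is the assertion that it has no poles. The passage to $\tilde{X}^{(N)}$ is what makes this clean: there the data $P_1+\cdots+P_g$ are genuine coordinates rather than the multivalued output of Jacobi inversion, so both numerator and denominator are honest sections pulled back by regular maps (Proposition \ref{CANONICAL}), and the cancellation holds globally without puncturing out the Wirtinger loci.

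I expect the main obstacle to be the bookkeeping of orders of vanishing. One must be sure the numerator vanishes to order at least one \emph{everywhere} on $X^{(N)}_{N+g-1}$ --- including along the sub-loci where $X^{(N)}_{N+g-1}$ meets $X^{(N)}_{N+g}$ or the diagonals, where the rigid representation $P_1+\cdots+P_g \equiv P_0 + \sum_l Q_l$ can itself degenerate. This is exactly where the extended configuration space $\tilde{X}^{(N)}$ and the uniqueness statement $\dim H^0(X,\ocal(Q_1+\cdots+Q_{g-1}))=1$ are used to keep the local picture under control; any higher-order vanishing of the numerator only produces a removable zero and so is harmless for the holomorphy claim.
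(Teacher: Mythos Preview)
Your proposal is correct and follows essentially the same route as the paper: locate the zeros of the theta factor via Riemann's vanishing theorem as the locus $X^{(N)}_{N+g-1}$, then invoke the cancellation already established in Lemma \ref{ALLZEROSCANCELLED} (where $P_1+\cdots+P_g = P_0 + Q_1 + \cdots + Q_{g-1}$) to show the prime-form numerator vanishes there, and pass to $\tilde{X}^{(N)}$ to make the argument global. The paper's proof adds one preliminary sentence you omit --- that the poles of $\left|\prod_k {\bf 1}_{P_0}(\zeta_k)\right|^{-2}$ in the full definition (\ref{FNDEF}) of $F_N$ are cancelled by the zeros of $\fcal_N$ --- but this is an easy reduction and your focus on the ``That is'' clause is the substance of the lemma; your additional care about matching orders of vanishing via bosonization is more explicit than what the paper writes.
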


\begin{proof} We first note that the poles of $\left| \prod_{k = 1}^N {\bf 1}_{P_0}(\zeta_k)
\right|^{-2}$ are cancelled by the zeros of $\fcal_N$. Hence it
suffices to prove the second statement, which comes down to the
equivalent statement in Lemma \ref{ALLZEROSCANCELLED}.

 By
Riemann's vanishing theorem, the zeros of $\theta ((N + g -1) P_0
- \sum_{i = 1}^{N } \zeta_i - \Delta)$ occur at $\{\zeta_1, \dots,
\zeta_N\}$ such that   $[(N + g -1) P_0 - \sum_{i = 1}^{N }
\zeta_i ] = Q_1 + \cdots + Q_{g-1}$ is the divisor class of a line
bundle of degree $g -1 $ with a one-dimensional space of
holomorphic sections. Equivalently, $[(N + g) P_0 - \sum_{i =
1}^{N } \zeta_i ] = P_0 + Q_1 + \cdots + Q_{g-1}$ is the divisor
class of a line bundle of degree $g $ with a one-dimensional space
of holomorphic sections vanishing at $P_0$. But this implies that
$P_0 + Q_1 + \cdots + Q_{g -1} = P_1 + \cdots + P_g$ when the
representation is unique, so that $\left(\prod_{k = 1}^N \;
\prod_{j = 1}^g E(P_j, \zeta_k)\right)   $ vanishes when some
$\zeta_k$ equals some   $Q_j$.


\end{proof}

For $\zeta_1 + \cdots + \zeta_N \notin X^{(N)}_{N + g}$,
$A_N(\zeta_1 + \cdots + \zeta_N) = P_1 + \cdots + P_g$ and
$$(N + g - 1) P_0 - \sum_{j = 1}^N \zeta_j = (N + g) P_0 - \sum_{j
= 1}^N \zeta_j - P_0 = P_1 + \cdots + P_g - P_0,
$$ hence $$ \theta( (N + g - 1) P_0  - \sum_{j = 1}^N
\zeta_j - \Delta) =  \theta(P_1 + \cdots + P_g - P_0 - \Delta).
$$
The same formula is valid on $\tilde{X}^{(N)}$.

%
%
%
%

We sum up in the

\begin{mainprop} \label{FNFINAL}
There exists a constant $B(N, g)$ such that, as holomorphic
sections on $\tilde{X}^{(N)}$,
$$\begin{array}{lll} F_N(\zeta_1, \dots, \zeta_N, P_1  + \cdots + P_g) : & = &
B(N, g)  e^{ -(N + 1) \sum_{j = 1}^g \rho_{\omega}(P_j(\zeta))}
\left |\left(\prod_{k = 1 }^N \;\prod_{j = 1}^g E(P_j,
\zeta_k)\right) \right|^2 \\ && \\
& \cdot & ||  \theta(P_1 + \cdots + P_g - P_0 - \Delta)||^{-2}
\;\; \prod_{j = 1}^g |E(P_j, P_0)|^2 \; ||\Phi_{h^N}^{P_0}||^{- 2}

\end{array} $$ \end{mainprop}

We have used  Lemma \ref{BERGZEROS}, together with the
cancellation,

$$\begin{array}{lll}  \fcal_N(\zeta_1, \dots, \zeta_N)|\prod_{j = 1}^N E(P_0, \zeta_j)|^{-2}
& = & ||\Phi_{h^N}^{P_0}||^{- 2} \prod_{j = 1}^g |E(P_j, P_0)|^2.
\end{array}$$

\section{\label{JPCPLE} The JPC as a Chern form }

So far, we have concentrated in $\vec K_{PL}^N$. We now give
analogous formulae for the more complicated $\vec K^N_{FSH}$. We
do this by expressing $\vec K^N_{FSH}$ in terms of $\vec
K_{PL}^N$. To do this, it is helpful  to relate both forms to the
representative Chern form of $c_1(\zcal_N)^{\mbox{top}}$ of a
natural line bundle $\zcal_N \to X^{(N)}$.

\subsection{Line bundles over $X^{(N)}$}

We first make

\begin{maindefin} \label{ZCALNDEF} At $\{\zeta_1, \dots, \zeta_N\} \in X^{(N)}$, the
fibre of $\zcal_N$ is the line $\C {\bf 1}_{\ocal(\zeta_1+ \cdots+
\zeta_N)}$ of holomorphic sections of the line bundle
$\ocal(\zeta_1 + \cdots + \zeta_N)$,
\begin{equation} \label{ZCALN} \zcal_N \to X^{(N)}, \;\; (\zcal_N)_{\zeta} = \{[s] \in \ecal_N: \dcal(s) = \zeta \}, \end{equation}
\end{maindefin}

Recalling that $X^{(N)} \simeq \PP \ecal_N $, we may identify
$\zcal_N = \ocal_{\PP}(1) \to \PP (\ecal_N)$. We recall from \S
\ref{PICBUN} that this line bundle is ample.
Another line bundle which plays a role in computation of the JPC
is the twist
\begin{equation}  \zcal_N  \otimes  A_{N}^* \Theta \to X^{(N)},
\end{equation}
where $A_N: X^{(N)} \to \mbox{Jac}(X)$ is the Abelian sums map and
$\Theta \to \mbox{Jac}(X)$ is the standard theta-line bundle (see
\S \ref{THETASTUFF}).

\subsection{Formulae for $d \tau_N^{FSH}$ and $d \lambda_{PL}$}

The   Hermitian inner products $G_N(h, \nu)$ of (\ref{DEFGN})
induce  a Hermitian metric on $\zcal_N$,  corresponding to the
choice of $\omega_0$-admissible metrics on $\xi \in
\mbox{Pic}^N(X)$ (\S \ref{OPT}).  That is,  at a point $\zeta \in
X^{(N)}$, the norm of the vector ${\bf 1}_{\ocal(\zeta)} \in
\zcal_{\zeta}$ is $||{\b 1}_{\ocal(\zeta)}||_{G_N(h, \nu)}$. The
curvature $(1,1)$ form of the Chern connection is given by the
$(1,1)$ form,
\begin{equation} \label{OMEGAZCAL} \omega_{\zcal} (\zeta)= \frac{i}{2} \ddbar \log ||
{\bf 1}_{\ocal(\zeta)}||_{G(h, \nu)} \;\; \mbox{on}\;\; X^{(N)},
\end{equation} where $\ddbar$ is the operator on $X^{(N)}$, with
K\"ahler potential given by the pluri-subharmonic function  $\log
|| {\bf 1}_{\ocal(\zeta)}||_{G_N(h, \nu)}. $
 This choice is most natural geometrically,
but as in \S \ref{OPT}, we opt for the somewhat more complicated
Hermitian metric on $\zcal_N$ in Definition \ref{SZETA}, i.e.  the
Hermitian metric $|| S_{\zeta_1, \dots, \zeta_N}||_{G_{N + g}(h,
\nu)} = ||{\bf 1}_{\ocal(\zeta)} {\bf 1}_{\ocal(P(\zeta))}||_{G_{N
+ g}(h, \nu)}$. Its curvature $(1,1)$ form is given by,
\begin{equation} \label{OMEGAZCALb} \tilde{\omega}_{\zcal} (\zeta)= \frac{i}{2} \ddbar \log ||
S_{\zeta_1, \dots, \zeta_N}||_{G_{N + g}(h, \nu)}.
\end{equation}

Further, if we equip $\Theta \to \mbox{Jac}(X)$ with its standard
Hermitian metric $h_{\Theta}$ with curvature the Euclidean $(1,1)$
form $\omega_{\Theta} : = \sum dz_i \otimes d \bar{z}_i$ on
$Jac(X)$, then $\omega_{\zcal} + A_N^* \omega_{\Theta}$ is the
curvature form of $\zcal_N \otimes A_N^* \omega_{\Theta}$ (and
similarly for the $\tilde{\omega_{\zcal}}$ version).

\begin{mainprop}\label{JPCPROP} We have,

\begin{itemize}

\item The  Fubini-Study-Haar probability measure (Definition
\ref{FSHAARDEF}) is given by
\begin{equation} d\tau_N^{FSH} =  \tilde{\omega}_{\zcal}^{N-g} \wedge
A_N^* \omega_{\Theta}. \end{equation}

\medskip

\item The probability measure of the Fubini-Study-$X^{(g)}$
\begin{equation} d\tau_N^{FS X^{(g)}} =  \tilde{\omega}_{\zcal}^{N-g} \wedge
\rho_N^* d \sigma, \;\;\;\;\; (cf. (\ref{DIA}))
\end{equation}

\item The projective linear ensemble probability measure
$d\lambda_{PL}$  (Definition \ref{LARGEVS}) is given by
$\tilde{\omega}_{\zcal}^{N}$.
\end{itemize}

\end{mainprop}

\begin{proof}

The Proposition follows easily from the following

\begin{lem} \label{POS}

\begin{itemize}


\item  $\tilde{\omega}_{\zcal} = \psi_{\lcal_{N + g}}^*
\omega_{FS, N + g}, $ where $\omega_{FS, N + g}$ is the
Fubini-Study metric on $\PP H^0(X, \lcal_{N + g})$ corresponding
to the choice of Hermitian inner product $G_{N + g}(h, \nu)$. It
is a semi-positive $(1,1)$ form.

\item $\tilde{\omega}_{\zcal} + \omega_{\Theta}$ is a strictly
positive $(1,1)$ form on $X^{(N)}$.

\end{itemize}

\end{lem}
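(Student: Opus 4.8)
The plan is to prove the two assertions in turn, deriving the second from the first together with a pointwise linear-algebra argument.

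For the first assertion, I would start from the identification already recorded in the construction of $\psi_{\lcal_{N+g}}$ in \S\ref{PLSFSH}: the natural hyperplane bundle $\zcal_N = \ocal_{\PP}(1) \to X^{(N)} \simeq \PP\ecal_N$ is the pullback $\psi_{\lcal_{N+g}}^*\ocal_{\PP}(1)$ of the hyperplane bundle on $\PP H^0(X,\lcal_{N+g})$. The canonical section ${\bf 1}_{\ocal(\zeta)}$ of the fibre $(\zcal_N)_\zeta$ is carried by $\sigma_{\lcal}$ (equivalently $\psi_{\lcal_{N+g}}$) to $S_{\zeta_1,\dots,\zeta_N}$, and since the embeddings $\iota_{\xi,\lcal_{N+g}}$ are isometries by the very definition of the $\omega$-admissible inner products (\S\ref{ADMHERM}), one has $\|{\bf 1}_{\ocal(\zeta)}\|_{G_N(\xi)} = \|S_{\zeta_1,\dots,\zeta_N}\|_{G_{N+g}(h,\nu)}$. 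Using $S_{\zeta_1,\dots,\zeta_N}$ as a local holomorphic lift, the Fubini–Study form pulls back as $\psi_{\lcal_{N+g}}^*\omega_{FS,N+g} = \frac{i}{2}\ddbar\log\|S_{\zeta_1,\dots,\zeta_N}\|_{G_{N+g}(h,\nu)} = \tilde{\omega}_{\zcal}$, which is exactly \eqref{OMEGAZCALb}. Semi-positivity is then automatic, since $\tilde{\omega}_{\zcal}$ is the pullback of the strictly positive Fubini–Study form under a holomorphic map; it is degenerate precisely along the branch locus $\bcal^{(N)}_{N+g}$ where $D\sigma_{\lcal}$ drops rank (Proposition \ref{CANONICAL}).

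For the second assertion, set $Q_1 = \tilde{\omega}_{\zcal}$ and $Q_2 = A_N^*\omega_{\Theta}$, both semi-positive. I would record two facts. First, since $N \geq 2g-1$, Riemann–Roch makes $h^0(X,\xi) = N+1-g$ constant, so $A_N : X^{(N)} \to \mbox{Jac}(X)$ is the everywhere-submersive projection of the $\CP^{N-g}$-bundle $\PP\ecal_N$; hence at each $\zeta$ the vertical space $V_\zeta := \ker (DA_N)_\zeta$ is the tangent space to the Abel–Jacobi fibre $\PP H^0(X,\xi)$. Because $\omega_{\Theta}$ is a positive-definite Kähler form on $\mbox{Jac}(X)$, the pullback satisfies $Q_2(v,v) = \omega_{\Theta}(DA_N v, DA_N v)$, so $\ker Q_2 = V_\zeta$ exactly. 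Second, $\psi_{\lcal_{N+g}}$ restricts on each fibre $\PP H^0(X,\xi)$ to an isometric linear embedding of projective spaces (it is linear in $s$, hence an embedding at every point, independently of the branch behaviour), so it pulls the Fubini–Study form back to the Fubini–Study form of the fibre; thus $Q_1|_{V_\zeta}$ is strictly positive everywhere.

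The conclusion then follows from a pointwise lemma: if $Q_1,Q_2\ge 0$ are Hermitian forms on a complex vector space with $\ker Q_2 = V$ and $Q_1|_V$ positive definite, then $Q_1+Q_2 > 0$. Indeed, if $(Q_1+Q_2)(v,v)=0$ then semi-positivity forces $Q_1(v,v)=Q_2(v,v)=0$; from $Q_2(v,v)=0$ we get $v\in V$, and then $Q_1|_V>0$ forces $v=0$. Applying this at each $\zeta \in X^{(N)}$ yields strict positivity of $\tilde{\omega}_{\zcal}+\omega_{\Theta}$. I expect the main obstacle to be the bookkeeping at the exceptional loci: the map $\psi_{\lcal_{N+g}}$ (via $\sigma_{\lcal}$ on $\tilde{X}^{(N)}$) and the identification $\tilde{X}^{(N)}\to X^{(N)}$ degenerate along the Wirtinger subvariety $X^{(N)}_{N+g}$ and the branch locus, so one must check that the two ingredients of the lemma persist there — namely that $\ker Q_2 = V_\zeta$ still holds (from global submersivity of $A_N$, which needs $N\ge 2g-1$) and that $Q_1$ stays strictly positive on $V_\zeta$ (from the fibrewise linearity of $\psi_{\lcal_{N+g}}$, which is unaffected by the horizontal degeneration). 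Once these are secured at every point, the elementary lemma upgrades the two semi-positive pieces to a globally strictly positive $(1,1)$-form.
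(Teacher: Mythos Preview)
Your proposal is correct and follows essentially the same approach as the paper's proof. For the first assertion, both you and the paper argue that the Hermitian metric on $\zcal_N \simeq \psi_{\lcal_{N+g}}^*\ocal_{\PP}(1)$ is the pullback of the Fubini--Study metric on the hyperplane bundle over $\PP H^0(X,\lcal_{N+g})$, so that the curvature forms agree and semi-positivity follows from holomorphicity of $\psi_{\lcal_{N+g}}$. For the second assertion, the paper simply notes that $\tilde{\omega}_{\zcal}$ is strictly positive along the Abel--Jacobi fibres while $A_N^*\omega_{\Theta}$ is strictly positive in the transverse (horizontal) directions, and concludes that the sum is strictly positive; your version makes this step rigorous by recording that $\ker(A_N^*\omega_{\Theta})$ equals the vertical space exactly (from submersivity of $A_N$ for $N\geq 2g-1$) and then invoking the clean pointwise lemma that $Q_1+Q_2>0$ whenever $Q_1,Q_2\geq 0$, $\ker Q_2 = V$, and $Q_1|_V>0$. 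This is a modest but genuine improvement in precision over the paper's somewhat informal ``positive on vertical plus positive on horizontal implies positive'' phrasing, and your explicit attention to why the argument survives on the Wirtinger and branch loci (global submersivity of $A_N$; fibrewise linearity of $\psi_{\lcal_{N+g}}$) is likewise more careful than the paper's treatment.
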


We consider  the following diagram.

\begin{equation}\label{DIB}  \begin{array}{llll} \zcal_N & \stackrel{\simeq}{\rightarrow}
 &\sigma_{\lcal}^* \ocal(1) & \;\;\;\stackrel{\sigma^*_{\lcal}}{ \rightarrow}\;\; \ocal(1) \\ \\  \pi \downarrow & & \downarrow \rho
 & \;\;\;\;\;\;\;\;\;\;\;\;\;\;\; \downarrow \pi
 \\ \\
(X^{(N)}, \tilde{\omega}_{\zcal}) & \stackrel{\dcal
\simeq}{\leftarrow} & (\PP \tilde{\ecal}_N, \sigma_{\lcal}^*
\omega_{FS, N + g}) & \stackrel{\sigma_{\lcal}}{
\rightarrow} \;\; (\PP H^0(X, \lcal_{N + g}), \omega_{FS}) \\ \\
A_N \downarrow & & \downarrow \mbox{Definition \ref{ETILDEDEF} } &
 \\ \\ \mbox{Jac}(X) & \stackrel{\simeq}{\leftarrow} &
X^{(g)} &
\end{array}
\end{equation}
where $\simeq$ is the identification, and $\sigma_{\lcal}$ is the
branched covering map, discussed in \S \ref{TILDEECALN}.

The inner product $G_{N + g}(h_0, \nu)$ on $H^0(X, \lcal_{N  +g})$
induces a Fubini-Study Hermitian metric $G_{N + g}(h, \nu)$ on
$\ocal(1) \to \PP H^0(X, \lcal_{N + g})$ and the Fubini-Study form
$\omega_{FS, N + g}$ on $\PP H^0(X, \lcal_{N + g})$ is its
curvature $(1,1)$ form. Under the holomorphic map $\psi_{\lcal_{N
+ g}}$ and its lift to $\ocal(1)$, the Hermitian metric and
curvature form pull back to a Hermitian metric and its curvature
form on $\psi^*_{\lcal_{N + g}} \ocal(1)$. The pulled back
Hermitian metric is the natural Hermitian metric on $\ocal(1) \to
\PP \ecal_N$ defined by the Hermitian inner products. Since
$\psi_{\lcal_{N + g}}$ is holomorphic, $\psi_{\lcal_{N + g}}^*
\omega_{FS, N + g} = \omega_{\PP \ecal_N}$, where the latter is
$\tilde{\omega}_{\zcal}$ transported to $\PP \ecal_N$.

It follows that $\tilde{\omega}_{\zcal_N}$ is semi-positive and
strictly positive away from the singular set of $\psi_{\lcal_{N +
g}} $ determined in Proposition \ref{CANONICAL}.  It is also
strictly positive along the fibers of $\PP \ecal_N \to X^{g}$.
Since $A_N^* \omega_{\Theta}$ is semi-positive and strictly
positive along the `horizontal' directions transverse to the
fibers, the sum of the two is strictly positive.

By the definitions  (\ref{FSHAARDEF}) and  (\ref{JPCDEF}),
\begin{equation} \left\{ \begin{array}{lll}    \psi_{\lcal_{N +
g}}^*(\tilde{\omega}_{FS, N + g}^{N - g}) \wedge  A_N^* d
\omega_{\Theta}^g & = & \tilde{\omega}_{\zcal}^{N - g} \wedge
A_N^* \omega_{\Theta}^g,
\\ && \\ \psi_{\lcal_{N +
g}}^*(\omega_{FS, N + g}^{N})  & = & \tilde{\omega}_{\zcal}^{N }.
\end{array} \right.\end{equation}

\end{proof}

Since $\psi_{\lcal_{N + g}}$ is singular along the Wirtinger
subvariety of $\PP \ecal_N$, or equivalently along $X^N_{N + g}$,
it is helpful to factor the pullbacks as follows:

\begin{cor}\label{JPCTILDE}

\begin{itemize}

\item The JPC for the Fubini-Study-Haar ensemble is given by
$\vec{K}_N = \dcal_* \left(\iota_{\lcal}^* \omega^{N-g}_{FS, N +
g} \right) \wedge A_N^*\omega_{\Theta}^g;$

\item The JPC for the projective linear ensemble is given by
$\vec{K}_N =  \dcal_* \left(\iota_{\lcal}^* \omega^{N}_{FS, N + g}
\right)$

\end{itemize}

\end{cor}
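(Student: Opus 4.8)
The plan is to read the two formulas off Proposition \ref{JPCPROP} and Lemma \ref{POS}, the only genuinely new bookkeeping being that the computation is transported from $X^{(N)}$ onto the extended configuration space $\PP\tilde{\ecal}_N=\tilde X^{(N)}$, where $\sigma_{\lcal}=\psi_{\lcal_{N+g}}$ is globally defined (Proposition \ref{CANONICAL}) rather than merely defined off the Wirtinger locus. By Definition \ref{JPCDEF} the JPC is $\vec K_N=\dcal_*\tau_N$, and by Proposition \ref{JPCPROP} the two measures are $\tilde{\omega}_{\zcal}^{N-g}\wedge A_N^*\omega_{\Theta}^g$ (Fubini--Study--Haar) and $\tilde{\omega}_{\zcal}^{N}$ (projective linear). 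By Lemma \ref{POS} one has $\tilde{\omega}_{\zcal}=\psi_{\lcal_{N+g}}^*\omega_{FS,N+g}$; equivalently, under the divisor identification $\dcal\colon\PP\tilde{\ecal}_N\to X^{(N)}$ of \eqref{DIAa} this reads $\dcal^*\tilde{\omega}_{\zcal}=\sigma_{\lcal}^*\omega_{FS,N+g}$ as semi-positive $(1,1)$ forms on $\tilde X^{(N)}$. This is the ``factored pullback'' of the statement, and I will write $\iota_{\lcal}^*\omega_{FS,N+g}$ for $\sigma_{\lcal}^*\omega_{FS,N+g}$ (cf. the maps of \eqref{DIA}--\eqref{DIAa}).

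First I would record, from the discussion after Definition \ref{ETILDEDEF}, that $\dcal$ is an analytic isomorphism away from the codimension-one locus $X^{(N)}_{N+g}$ (the top arrow of \eqref{DIA} is its inverse), hence is birational of degree one. Consequently the fiber factors $\dcal_*\bigl(\sigma_{\lcal}^*\omega_{FS,N+g}^{N-g}\bigr)$ and $\dcal_*\bigl(\sigma_{\lcal}^*\omega_{FS,N+g}^{N}\bigr)$ are simply the transports of these forms across $\dcal$, coinciding with $\tilde{\omega}_{\zcal}^{N-g}$ and $\tilde{\omega}_{\zcal}^{N}$ off the exceptional set. For the projective linear case this already yields $\dcal_*\bigl(\iota_{\lcal}^*\omega_{FS,N+g}^{N}\bigr)=\tilde{\omega}_{\zcal}^{N}=d\lambda_{PL}=\vec K_N$, the middle equality being the second line of the displayed identity in the proof of Proposition \ref{JPCPROP}.

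For the Fubini--Study--Haar case the crux is the projection formula. Commutativity of \eqref{DIAa} gives $A_N\circ\dcal=\iota_{\lcal}\circ\rho$, so the base factor is a genuine $\dcal$-pullback, $\dcal^*\bigl(A_N^*\omega_{\Theta}^g\bigr)=\rho^*\iota_{\lcal}^*\omega_{\Theta}^g$, and therefore
\[
\dcal_*\Bigl(\sigma_{\lcal}^*\omega_{FS,N+g}^{N-g}\wedge\dcal^*A_N^*\omega_{\Theta}^g\Bigr)
=\dcal_*\bigl(\sigma_{\lcal}^*\omega_{FS,N+g}^{N-g}\bigr)\wedge A_N^*\omega_{\Theta}^g
=\dcal_*\bigl(\iota_{\lcal}^*\omega_{FS,N+g}^{N-g}\bigr)\wedge A_N^*\omega_{\Theta}^g .
\]
Combined with $\dcal^*\tilde{\omega}_{\zcal}=\sigma_{\lcal}^*\omega_{FS,N+g}$ this identifies the right-hand side with $\dcal_*\dcal^*\bigl(\tilde{\omega}_{\zcal}^{N-g}\wedge A_N^*\omega_{\Theta}^g\bigr)=\tilde{\omega}_{\zcal}^{N-g}\wedge A_N^*\omega_{\Theta}^g=d\tau_N^{FSH}=\vec K_N$, the last steps using that $\dcal$ has degree one, so $\dcal_*\dcal^*$ is the identity off the exceptional set.

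The main obstacle I anticipate is purely the measure-theoretic care along the Wirtinger locus $X^{(N)}_{N+g}$: one must check that pushing forward across the degeneracy of $\dcal$ creates no spurious mass and that the semi-positive forms remain integrable there. This is precisely the payoff of working on $\tilde X^{(N)}$, where $\sigma_{\lcal}^*\omega_{FS,N+g}$ is a globally smooth semi-positive $(1,1)$ form (Lemma \ref{POS}), $X^{(N)}_{N+g}$ has measure zero, and $\dcal$ is a proper degree-one modification, so $\dcal_*$ commutes with forming top powers and the two volume forms agree as measures. One should also confirm that the normalizing constants (the $\{N\choose g\}$ of Definition \ref{LARGEVS}, together with the universal scalar relating $\omega_{FS,N+g}^{N}$ to the probability Fubini--Study volume) are absorbed consistently; these are $N$-dependent scalars that do not affect the identity of forms and are fixed by the requirement that $\vec K_N$ have unit mass.
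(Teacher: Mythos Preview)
Your proposal is correct and follows essentially the same route as the paper. The paper does not give a separate proof of this corollary; it simply remarks that, since $\psi_{\lcal_{N+g}}$ is singular along the Wirtinger subvariety, one should factor the pullbacks through $\PP\tilde{\ecal}_N$ via the diagrams \eqref{DIAa}--\eqref{DIA}, and then states the corollary as the resulting reformulation of Proposition~\ref{JPCPROP} and Lemma~\ref{POS}. You have supplied exactly those details---the identification $\dcal^*\tilde{\omega}_{\zcal}=\sigma_{\lcal}^*\omega_{FS,N+g}$, the commutativity $A_N\circ\dcal=\iota_{\lcal}\circ\rho$, the projection formula, and the degree-one birationality of $\dcal$---together with the measure-theoretic check along the exceptional locus that the paper leaves implicit.
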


\section{\label{PLVSFSH} Proof of Theorem \ref{JPDFSH}}

In Proposition \ref{JPCPROP} we describe the two probability
measures as volume forms on $\PP \ecal_N$ or equivalently on
$X^{(N)}$. We now use this relation to give an explicit formula
for $\vec K^N_{FSH}$:

\begin{maintheo}\label{MAINFSHPROP}  We have
$$\vec K^N_{FSH} (\zeta_1, \dots, \zeta_N) = \jcal_N (\zeta_1, \dots, \zeta_N)  \frac{1}{Z_N(\omega)}
    \frac{\exp \left(\frac{1}{2} \sum_{i \not=
j} G_{\omega}(\zeta_i, \zeta_j) \right) \prod_{j = 1}^N d^2
\zeta_j}{\left(\int_{X} e^{ N \int_{X} G_{\omega}(z,w)
d\mu_{\zeta}(w)} e^{ \int_{X} G_{\omega}(z,w) d\mu_{P(\zeta)}(w)}
d\nu(z) \right)^{N+1}}, $$ where (for a certain constant $B(N,
g)$),
\begin{equation} \label{JN} \begin{array}{lll} \jcal_N (\zeta_1, \dots, \zeta_N)
 & = &
  B(N, g)  e^{ -(N + 1) \sum_{j = 1}^g
\rho_{\omega}(P_j(\zeta))} ||\Phi_{h^N}^{P_0}||^{- 2}\\ && \\
& \cdot & ||  \theta(P_1 + \cdots + P_g - P_0 - \Delta)||^{-2}
\;\; \prod_{j = 1}^g |E(P_j, P_0)|^2 \;
 (\det \left( \langle \Phi_{N + g}^{P_j}, \Phi_{N + g}^{P_k} \rangle \right) \\ && \\
&& \left| \left(  \prod_{j \not= n} E(P_n, P_j)\right)
 \right|^{-2}  |\det D A_g|^2,\;\;
\end{array} \end{equation}
where $D A_g$ is the derivative of the Abel map.

Here,
$$ ||  \theta(P_1 + \cdots + P_g - P_0 - \Delta)||^{-2} \;  \prod_{j = 1}^g |E(P_j, P_0)|^2 \;  |\det D A_g|^2$$ is a smooth positive
function, and
$$\det \left(\Phi_{N +
g}^{P_j}(P_k) \right)   \left|  \prod_{j \not= n} E(P_n,
P_j)\right|^{-2}.
$$ is  a smooth nowhere vanishing function on $X^{(g)}$ times
$|\prod_{j < n} E(P_j, P_n)|^{-2}. $

\end{maintheo}

\begin{proof}

 It suffices  to calculate the ratio of the volume
forms, i.e. the Jacobian
\begin{equation} \label{JNa} J_N : = \frac{\tilde{\omega}_{\zcal}^{N}}{\tilde{\omega}_{\zcal}^{N-g} \wedge A_N^*
\omega_{\Theta}^g }. \end{equation} The discussion is similar for
the ensembles of Definition \ref{FSTILDEDEF}, with forms pulled
back from $X^{(g)}$. In fact, the proof involves passing back and
forth between $\tilde{X}^{(N)}$ and $X^{(N)}$ and between
$\mbox{Jac}(X)$ and $X^{(g)}$.

 We do this by
evaluating numerator and denominator on  frames for the vertical
bundle  $V \PP \ecal_N$ and for the horizontal space $H \PP
\ecal_N$ defined by the connection:

\begin{equation} H_{(L, [s])} \PP \ecal_N : =; V_{(L, [s])}^{\perp} \;\; \mbox{with respect to the metric}\;\;
  \tilde{\omega}_{\zcal_N} + A_N^*\omega_{\Theta}. \end{equation}
  In fact, the same connection is defined by the condition that
  the horizontal space is $\tilde{\omega}_{\zcal_N}$-orthogonal to
  the vertical space. Indeed,
 $A_N^* \omega_{\Theta}$ is a `horizontal' $(1,1)$
  form, i.e. $A_N^* \omega_{\Theta}(V, W) = 0$ for all $W \in
  T_{(L, [s])} \PP \ecal_N$ if $V$ is vertical. It follows that  $ \tilde{\omega}_{\zcal_N}(V, H) =   (\tilde{\omega}_{\zcal_N} +
  A_N^*\omega_{\Theta})(H, V)   $. The difference in the metrics
  lies in the fact that $ (\tilde{\omega}_{\zcal_N} +
  A_N^*\omega_{\Theta})$ is always non-degenerate in the
  horizontal subspace.

\subsection{Calculation of the Jacobian $J_N$}

We  let $V_1, \dots, V_N$ be a vertical orthonormal frame for $V$
with respect to $\tilde{\omega}_{\zcal_N} + A_N^*\omega_{\Theta}$,
or equivalently,  $\tilde{\omega}_{\zcal_N}$; this makes sense,
since they are the same metric along the fibers.


\begin{lem}\label{FSHsPL}
The Jacobian (\ref{JNa}) is given by
$$J_N = \frac{\det \langle H_i, \bar{H}_j
\rangle_{\tilde{\omega}_{\zcal}}}{\det (A_N^* \omega_{\Theta}^g
(H_i, \bar{H}_j))}
$$

\end{lem}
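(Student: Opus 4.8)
The plan is to read off the Jacobian $J_N=\tilde{\omega}_{\zcal}^{N}/(\tilde{\omega}_{\zcal}^{N-g}\wedge A_N^*\omega_{\Theta}^g)$ of (\ref{JNa}) by evaluating both of these top $(N,N)$-forms on a single frame adapted to the fibration $A_N\colon X^{(N)}\to\mbox{Jac}(X)$. Since the quotient of two top-degree forms is simply the quotient of their values on any one frame, it suffices to produce a convenient frame and compute. First I would take the given vertical orthonormal frame $V_1,\dots,V_{N-g}$ of the fiber direction (the fiber of $A_N$ has dimension $N-g$) and extend it by a horizontal frame $H_1,\dots,H_g$ spanning $H\PP\ecal_N$, so that $V_1,\dots,V_{N-g},H_1,\dots,H_g$ is a $(1,0)$-frame of $TX^{(N)}$ on the complement of the Wirtinger locus.

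Next I would record the two orthogonality facts that make the computation collapse. By the definition of the horizontal space as the $\tilde{\omega}_{\zcal}$-orthogonal complement of the vertical one (the same connection as the $(\tilde{\omega}_{\zcal}+A_N^*\omega_{\Theta})$-orthogonal complement, since $A_N^*\omega_{\Theta}$ kills vertical vectors), one has $\tilde{\omega}_{\zcal}(V_i,\bar H_j)=0$; and because the $V_i$ are tangent to the fibers of $A_N$, one has $A_N^*\omega_{\Theta}(V_i,\,\cdot\,)=0$. In this frame the Gram matrix of $\tilde{\omega}_{\zcal}$ is therefore block diagonal, the vertical block being the identity $I_{N-g}$ by orthonormality and the horizontal block being $(\tilde{\omega}_{\zcal}(H_i,\bar H_j))$, while the Gram matrix of $A_N^*\omega_{\Theta}$ vanishes except in the horizontal $g\times g$ block $(A_N^*\omega_{\Theta}(H_i,\bar H_j))$.

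I would then expand the two top powers. Using the standard fact that a $(1,1)$-form with Gram matrix $M$ has top power proportional to $\det M$ times the coordinate volume, the block-diagonal structure gives $\tilde{\omega}_{\zcal}^{N}\propto\det(\tilde{\omega}_{\zcal}(H_i,\bar H_j))$. For the mixed wedge $\tilde{\omega}_{\zcal}^{N-g}\wedge A_N^*\omega_{\Theta}^g$ I would note that, since $A_N^*\omega_{\Theta}$ annihilates every vertical vector, the factor $A_N^*\omega_{\Theta}^g$ must saturate exactly the $g$ horizontal directions, forcing $\tilde{\omega}_{\zcal}^{N-g}$ to occupy the $N-g$ vertical ones; every other term in the multinomial expansion places too many forms in the $g$-dimensional horizontal space and hence vanishes. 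This leaves $\tilde{\omega}_{\zcal}^{N-g}\wedge A_N^*\omega_{\Theta}^g\propto\det(I_{N-g})\cdot\det(A_N^*\omega_{\Theta}(H_i,\bar H_j))$. Taking the ratio and cancelling the coordinate volume yields the stated formula, the remaining purely combinatorial factor ${N \choose g}$ (from the differing multinomial weights of $\tilde{\omega}_{\zcal}^N$ and $\tilde{\omega}_{\zcal}^{N-g}\wedge A_N^*\omega_{\Theta}^g$) being constant in $\zeta$ and absorbed into the normalization $Z_N(\omega)$, consistently with the factor ${N \choose g}^{-1}$ of Definition \ref{LARGEVS}.

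The main obstacle is the verification that in the mixed wedge all cross terms drop out: one must check that any term in the expansion of $\tilde{\omega}_{\zcal}^{N-g}\wedge A_N^*\omega_{\Theta}^g$ in which $A_N^*\omega_{\Theta}$ would be paired with a vertical direction contributes zero, which is precisely where the identity $A_N^*\omega_{\Theta}(V_i,\,\cdot\,)=0$ is used, and dually where $\tilde{\omega}_{\zcal}(V_i,\bar H_j)=0$ produces the clean block splitting. A secondary point to keep in view is that $\tilde{\omega}_{\zcal}$ is only semi-positive (Lemma \ref{POS}), so the frame and the non-degeneracy of $\det(\tilde{\omega}_{\zcal}(H_i,\bar H_j))$ are available only off the Wirtinger subvariety; since all the measures in question are computed on that open dense set, this is harmless.
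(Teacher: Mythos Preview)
Your proposal is correct and follows essentially the same route as the paper: both evaluate the two top forms on the adapted frame $V_1,\dots,V_{N-g},H_1,\dots,H_g$, use that $\tilde{\omega}_{\zcal}(V_i,\bar H_j)=0$ and $A_N^*\omega_{\Theta}(V_i,\cdot)=0$ to obtain block-diagonal Gram matrices, and read off the ratio of determinants. Your treatment is in fact slightly more careful than the paper's brief proof, since you make explicit why only the ``vertical$^{N-g}\times$horizontal$^g$'' term survives in the mixed wedge and you track the multinomial factor ${N\choose g}$, which the paper silently absorbs into the normalizing constant.
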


\begin{proof}

For   $\tilde{\omega}_{\zcal}$, the Gram matrix of inner products
of the elements of the frame is the $N\times N$ matrix
$$ \gcal_N : =  \begin{pmatrix} \langle V_i, \bar{V}_j \rangle & \langle V_i,
\bar{H}_j\rangle
\\ & \\
\langle \bar{V}_i, H_j \rangle  &  \langle H_i, \bar{H}_j \rangle
\end{pmatrix} = \begin{pmatrix} I_{(N-g) \times (N-g)} & 0_{(N - g) \times g}
\\ & \\
0_{g \times (N -g)} &  \langle H_i, \bar{H}_j
\rangle_{\tilde{\omega}_{\zcal}}
\end{pmatrix}. $$ Here, $\langle, \rangle$ is short for the Hermitian semi-metric
defined by  $\tilde{\omega}_{\zcal}$. Then,
$$\tilde{\omega}_{\zcal_N}^N (V_1, \bar{V}_1, \dots, V_{N - g},
\bar{V}_{N - g}, H_1, \bar{H}_1, \dots, H_g, \bar{H}_g) \; = \;
\det \gcal_N. $$

 In the case of $\tilde{\omega}_{\zcal}^{N-g}
\wedge A_N^* \omega_{\Theta}^g$, the volume of the frame is the
determinant of the  matrix
$$ \begin{pmatrix} \langle V_i, \bar{V}_j \rangle & 0
\\ & \\
0  &   A_N^* \omega_{\Theta}^g(H_i, \bar{H}_j)
\end{pmatrix} = \begin{pmatrix} I_{(N-g) \times (N-g)} & 0_{(N - g) \times g}
\\ & \\
0_{g \times (N - g)} & A_N^* \omega_{\Theta}^g(H_i, \bar{H}_j)
\end{pmatrix}. $$

\end{proof}

We now evaluate each horizontal block determinant by carrying the
connection over to $X^{(N)}$ under the identification with $\PP
\ecal_N$ and horizontally lift a curve from $\mbox{Jac}(X)$ to
$X^{(N)}$ starting at a point $\vec \zeta = \zeta_1 + \cdots +
\zeta_N$. The point $\vec \zeta$ corresponds to a line of sections
$[s] \in \PP \ecal_N$.

We fix the standard basis of holomorphic $\frac{\partial}{\partial
z_j}$ on $\mbox{Jac}(X)$ and the associated basis of   real vector
fields $\frac{\partial}{\partial x_j}, \frac{\partial}{\partial
y_j}$. We let $H_j$ be the horizontal lift of
$\frac{\partial}{\partial z_j}$.

Further,  denote the integral curves by $x_j(t), y_j(t)$. We now
consider the horizontal lifts $[s_k(t)]$ of the curves $x_j(t),
y_j(t)$ to $\PP \ecal_N$, or alternatively the lifted  curves
$\zeta_1^k(t) + \cdots + \zeta_N^k(t)$ in $X^{(N)}$. We want to
calculate the matrix of inner products of their tangent vectors
with respect to $\tilde{\omega}_{\zcal}$. By definition, it is the
same to calculate the matrix of inner products of the tangent
vectors of the image curves in $\PP H^0(X, \lcal_{N + g})$ under
$\sigma_{\lcal}$. We thus need to consider the associated curves
$\vec P^k (t) = P_1^k(t) + \cdots + P_g^k(t)$ in $X^{(g)}$ which
are the images of $\zeta_1^k(t) + \cdots + \zeta_N^k(t)$ under
$A_{\lcal_{N + g}}$. The map is only well-defined away from
$X^{(N)}_{N + g}$. (Here, it would have been preferable to work on
$\tilde{X}^{(N)}$, but then the connection would degenerate).

 The curves $P^k_1(t) + \cdots +
P^k_g(t)$ in $X^{(g)}$ are the same as the images of the $x_k(t),
y_k(t)$ under the  inverse Abelian sums map $A_g^{-1}: \mbox{Jac}
\to X^{(g)}$.  The inverse map is not well-defined on $W_g^1$ (\S
\ref{AJSECT}). We are calculating volume forms so it is sufficient
to work on the complement of this set, but the degeneraracy set of
$A_g$ makes itself felt in the zeros and singularities of the
forms.

 Since  $\tilde{\omega}_{\zcal_N} = \psi_{\lcal_{N + g}}^* \omega_{FS, N +
g}$ (Lemma \ref{POS}), or $\sigma_{\lcal}^* \omega_{FS, N + g}$ on
$\tilde{X}^{(N)}$,  the  horizontal space $H_{([s], P_1 + \cdots +
P_g))} \PP \tilde{\ecal}_N$
  maps under $D \sigma_{\lcal}$ to the
orthogonal complement of the tangent space to the canonically
embedded image of  $\PP H^0(X, L)$. This complement may be
identified with the orthogonal complement in $H^0(X, \lcal_{N +
g})$ to the subspace $\{s \in H^0(X, \lcal_{N + g}): D(s) \geq P_1
+ \cdots + P_g \} $ with respect to the inner product $G_{N +
g}(h, \nu)$. Indeed, the natural projection $\pi: \C^{d + 1} -
\{0\} \to \CP^d$ is a Riemannian submersion when the spaces are
equipped with compatible Euclidean, resp. Fubini-Study, metrics.
It is a principal $\C^*$ bundle and carries a natural (Hopf)
connection.

We denote by $[s_k(t)]$ the horizontal curve in $\PP
\tilde{\ecal}_N$ whose initial
 tangent vector is $H_k$.  Under $\sigma_{\lcal}$ it goes to a curve
 in $\PP H^0(X, \lcal_{N + g})$. We use the well-known
 identification of the projective space $\PP H$ associated to a
 Hilbert space with the set of rank one Hermitian orthogonal projections
 in the space of Hermitian operators on $H$. Thus, to  $[s_k(t)]$
 we associate the curve $\pi_k(t) = \frac{s_k(t) \otimes
 s_k(t)^*}{||s_k(t)||_{L^2}^2}$ of projections. Then the
 Fubini-Study inner product of tangent vectors is given by
$\langle \dot{\pi}_k, \dot{\pi}_m \rangle = Tr \dot{\pi}_k \circ
\dot{\pi}_m^*$.

We then put $$s_k(t) = \prod_{j = 1}^g E(\cdot,
 P_j^k(t)) \prod_{j = 1}^N E(\cdot, \zeta_j^k(t)), \;\; S_k(t) =
e^{i \theta_k(t)}  \frac{s_k(t)}{||s_k(t)||}, $$ where the  phase
$e^{i \theta_k(t)}$ is chosen so that $\langle
 \dot{S}_k(t), S_k(t) \rangle = 0$. With this phase condition the
 map $\psi_t \to |\psi_t \rangle \langle \psi_t |$ is an isometry,
 i.e.
 \begin{equation} \label{TR} Tr \dot{\pi}_k(0) \dot{\pi}^*_m(0) = \langle \dot{S}_k(0),
 \dot{S}_m(0) \rangle. \end{equation}
Horizontality is the condition that $\dot{S}_k(0) \bot H^0(X,
\ocal(\zeta_1 + \cdots + \zeta_N))$. As   long as the $P_j$ are
 distinct, the coherent states  $\{\Phi_{N + g}^{P_j}\}$ form  a basis of the ortho-complement.  Hence,
there exists a matrix  $C = (C_j^k)$ of coefficients so that
\begin{equation} \label{HORTANa} \frac{d}{dt}|_{t = 0} S_k(t) = \sum_{j = 1}^g C_j^k \Phi_{N +
g}^{P_j}. \end{equation}

\begin{rem} Abbreviate $\lcal = \lcal_{N + g}$. Since $\Phi_{N +
g}^{P_j}(z) \in \lcal_{z} \otimes \lcal_{P_j}^*$, the left side of
(\ref{HORTANa}) is a section of $\lcal$ while the right side takes
values in $\lcal \boxtimes \lcal^*$. Hence, $C_j^k $ must take
values in $\lcal_{P_j}$ and the product $ C_j^k \Phi_{N +
g}^{P_j}$ implicitly involves a contraction. It will be seen below
that these extra factors cancel, so rather than introduce new
notation we will keep track of them in a series of remarks.
\end{rem}

The next step is to calculate the matrix $C$. To this end, we
introduce the matrix
 $$M_{\vec P} = M = (M_{i j}), \;\;\; \mbox{with}\;\; M_{ij} : = \langle
\Phi^{P_i}_{N + g}, \Phi^{P_j}_{N + g} \rangle =  \det (\Phi_{N +
g}^{P_j}(P_k)) . $$

\begin{rem} Since $\Phi_{N +
g}^{P_j}(P_k) = \Pi_{N + g}(P_k, P_j)$, this is matrix is also
$\lcal \boxtimes \lcal^*$-valued. As discussed in \S
\ref{BERGMANSECTION}, we could write it as $B_{N + g}(P_k, P_j)$
times a frame for $\lcal \boxtimes \lcal^*$. As noted above, the
frames will cancel later.
\end{rem}

We further define the matrix   $Q = Q_{s, \vec P}$ by
\begin{equation} \label{Q}  (Q_{n k}) = \frac{d}{dt}|_{t = 0} S_k(t)(P_n).   \end{equation}

\begin{rem} This matrix is $\lcal$-valued. ince the curve has  been lifted to $H^0(X, \lcal_{N + g})$,
 the tangent vectors may be identified  with elements of this vector
 space.
  Hence $Q_{n k} \in \lcal_{N + g}(P_n). $ \end{rem}

 It is clear that the $\frac{d}{dt}|_{t = 0}$ derivative only
 produces a non-zero term when the factor $E(P_n, P_n^k(t))$ is
 differentiated.  Thus, we have
\begin{equation} Q_{k n}  = \frac{ \left(\prod_{j \not= n} E(P_n,
 P_j) \prod_{j = 1}^N E(P_n, \zeta_j) \frac{d}{dt}_{t = 0} E(P_n, P_n^k(t)) \right)}{||\left(\prod_{j } E(\cdot,
 P_j) \prod_{j = 1}^N E(\cdot \zeta_j) \right)||_{L^2}}  e^{i \theta_k(t)}. \end{equation}
Since $E$ is a section of a line bundle, it
 should require a connection to differentiate it in the second
 component. However, since $E(z,w)$  vanishes on the diagonal, all
 connections give the same result, i.e.  the derivative of the
 coefficient $P_n - P_n^k(t)$ relative to a frame. Since the
 derivative does not touch the frame, it produces another
 holomorphic section of the same line bundle. We then identify
$  \frac{d}{dt}_{t = 0} E(P_n, P_n^k(t))$ with the scalar $-
\dot{P}_n^k(0)$, the coordinate of the $n$-component of the $k$th
curve in our choice of local coordinates. We thus have,
\begin{equation} \label{detQ} \det Q_{s, \vec P} =\frac{ \left(\prod_{\ell = 1}^g \prod_{j =
1}^N E(P_{\ell},\zeta_j)\right) \left(  \prod_{j \not= n}  E(P_n,
P_j)\right)}{||\left(\prod_{j } E(\cdot,
 P_j) \prod_{j = 1}^N E(\cdot \zeta_j) \right)||_{L^2}}  e^{i \theta_k(t)}. \left(\det  (- \dot{P}_n^k(0)) \right) .\end{equation}

To put the last determinant in an invariant form, we note that
$$ \left(- \dot{P}_n^k(0)) \right)^*  \left(- \dot{P}_n^k(0)) \right) = D
A_g^{-1 *} D A_g^{-1}. $$ Indeed, $(- \dot{P}_n^k(0))$ is the
matrix of $D A_g^{-1}$ relative to the standard basis
$\frac{\partial}{\partial z_j}$ of $\mbox{Jac}(X)$ and the
coordinate basis of $X^{(g)}$.

We now claim that

\begin{lem} \label{JNCLAIM} Let $H_j$  be the horizontal lifts of the coordinate vector fields
$\frac{\partial}{\partial z_j}$ on $\mbox{Jac}(X)$ to $H_{([s],
P_1 + \cdots + P_g)} \PP \ecal_N$.  Then
\begin{equation} \label{MAINFORM}  \det
 (\langle
H_i, \bar{H}_j \rangle_{\tilde{\omega}_{\zcal_N}}) = \det (M^{
-1}) |\det Q|^2,
\end{equation} and therefore, $$J_N = (\det M^{ -1}) |\det Q|^2.$$
\end{lem}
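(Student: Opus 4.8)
The plan is to reduce the Gram determinant on the left of (\ref{MAINFORM}) to Fubini--Study inner products of the lifted curves $S_k(t)$, and then to carry out a purely linear-algebraic manipulation relating the coefficient matrix $C$ of (\ref{HORTANa}) to the matrices $M$ and $Q$. First I would invoke Lemma \ref{POS}, which identifies $\tilde{\omega}_{\zcal}$ with $\sigma_{\lcal}^*\omega_{FS, N+g}$, together with the fact that the projection $\pi:\C^{d+1}\setminus\{0\}\to\CP^d$ is a Riemannian submersion in the compatible Euclidean/Fubini--Study metrics and the phase normalization $\langle \dot S_k(0), S_k(0)\rangle = 0$. This gives the isometry (\ref{TR}) and hence
\begin{equation*}
\langle H_i, \bar H_j\rangle_{\tilde{\omega}_{\zcal}} = \langle \dot S_i(0), \dot S_j(0)\rangle ,
\end{equation*}
so it suffices to compute the Gram determinant $\det(\langle \dot S_i(0), \dot S_j(0)\rangle)$.

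Next I would insert the horizontal expansion (\ref{HORTANa}), $\dot S_k(0) = \sum_{a=1}^g C_a^k \Phi_{N+g}^{P_a}$. This expansion is available because horizontality forces $\dot S_k(0)$ into the orthogonal complement of the embedded subspace $\{s:\dcal(s)\ge P_1+\cdots+P_g\}\subset H^0(X,\lcal_{N+g})$, and that complement is exactly the span of the coherent states $\{\Phi_{N+g}^{P_a}\}_{a=1}^g$, since by the reproducing property of \S \ref{COHERENTSTATE} one has $s\perp \Phi_{N+g}^{P_a}$ for all $a$ iff $s(P_a)=0$ for all $a$. Writing $\mathbf C = (C_a^k)$ and expanding bilinearly,
\begin{equation*}
\langle \dot S_i(0), \dot S_j(0)\rangle = \sum_{a,b} C_a^i\, M_{ab}\, \overline{C_b^j} = (\mathbf C^{T} M \overline{\mathbf C})_{ij},
\end{equation*}
so that $\det(\langle \dot S_i(0),\dot S_j(0)\rangle) = |\det \mathbf C|^2\,\det M$.

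The key step is to eliminate $\mathbf C$ in favour of $Q$. Evaluating $\dot S_k(0)$ at $P_n$ and again using $\Phi_{N+g}^{P_a}(P_n) = \langle \Phi_{N+g}^{P_a},\Phi_{N+g}^{P_n}\rangle = M_{an}$ gives
\begin{equation*}
Q_{nk} = \dot S_k(0)(P_n) = \sum_{a} M_{an}\, C_a^k = (M^{T}\mathbf C)_{nk},
\end{equation*}
i.e. $Q = M^{T}\mathbf C$. Since $M$ is the Gram matrix of the linearly independent coherent states it is Hermitian, positive definite, and invertible, so $\det \mathbf C = \det Q/\det M$ and $|\det\mathbf C|^2 = |\det Q|^2/(\det M)^2$, where I use that $\det M$ is real and positive. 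Substituting back,
\begin{equation*}
\det(\langle H_i,\bar H_j\rangle_{\tilde{\omega}_{\zcal}}) = |\det\mathbf C|^2\det M = \frac{|\det Q|^2}{\det M} = \det(M^{-1})\,|\det Q|^2 ,
\end{equation*}
which is (\ref{MAINFORM}); the equality $J_N = (\det M^{-1})|\det Q|^2$ then follows from Lemma \ref{FSHsPL}, once one notes that $DA_N$ carries the frame $H_j$ to the standard frame $\partial/\partial z_j$ on $\mbox{Jac}(X)$, so that the denominator $\det(A_N^*\omega_{\Theta}^g(H_i,\bar H_j))$ equals $1$.

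I expect the main obstacle to be the line-bundle bookkeeping flagged in the Remarks: the entries of $M$ and $Q$, and the coefficients $C_a^k$, are not scalars but take values in fibres of $\lcal_{N+g}$ (and its conjugate) at the points $P_n$, so the identities $Q = M^T\mathbf C$ and $\det\mathbf C = \det Q/\det M$ must be read as relations among $\lcal_{N+g}$-valued quantities. The substantive check is that these fibre factors cancel globally, so that both sides of (\ref{MAINFORM}) are genuine real positive scalars. Concretely I would fix a local frame, replace $M$ and $Q$ by their Bergman-kernel coefficient matrices $(B_{N+g}(P_i,P_j))$ and the coefficient version of $Q$ from (\ref{detQ}), verify that the frame powers carried by $|\det Q|^2$ and $\det M$ match and cancel, and only then invoke Hermitian positivity of the scalar Gram matrix $(B_{N+g}(P_i,P_j))$ to legitimize the reality of $\det M$ used above.
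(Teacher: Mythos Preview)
Your proposal is correct and follows essentially the same route as the paper's own proof: expand the horizontal tangent vectors in the coherent-state basis via (\ref{HORTANa}) to get a Gram matrix of the form $CMC^*$, evaluate at the points $P_n$ to obtain the relation $Q = CM$ (up to transposes, which are harmless here since $M$ is Hermitian and only $|\det Q|^2$ and $\det M$ enter), and substitute. Your explicit remark that $DA_N(H_j)=\partial/\partial z_j$ forces the denominator in Lemma \ref{FSHsPL} to equal $1$ is a point the paper leaves implicit, and your closing paragraph on the line-bundle bookkeeping matches the paper's own caveats in the Remarks surrounding (\ref{HORTANa}) and Lemma \ref{JNCLAIM}.
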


\begin{rem} We observe that both $\det M$ and  $|\det Q|^2$ take values in $\lcal
\boxtimes \lcal^*$. Hence if we expressed them relative to a frame
$e_{\lcal}$ for $\lcal$, the frames would cancel in the quotient.
If we we write a section $\dot{S}_k(P_n) = f_k(P_n)
e_{\lcal}(P_n)$ then the ratio leaves a Slater determinant $\|
\det f_k(P_n)|^2$ in the numerator and the Bergman determinant
$B_{N + g}(P_k, P_j)$ in the denominator. Thus the ratio is a
positive scalar function.

\end{rem}

We now prove the Lemma:

\begin{proof}

In view of
 Lemma \ref{FSHsPL}, the second statement follows from the first.
 We now prove the first statement. Taking inner products and using
 (\ref{HORTANa}), we have
$$(\langle H_i, \bar{H}_j
\rangle_{\tilde{\omega}_{\zcal_N}}) = C M C^* $$ and
\begin{equation} \label{STEP1} \det (\langle H_i, \bar{H}_j
\rangle_{\tilde{\omega}_{\zcal_N}}) = \det \left(\langle
\Phi^{P_i}_{N + g}, \Phi^{P_j}_{N + g} \rangle \right) \det
\left(C^* C \right) = \det M \det (C^* C). \end{equation}

 It also follows by setting $z = P_n$ in  (\ref{HORTANa}) that
\begin{equation} \label{HORTANn} Q_{k n} = \sum_{j = 1}^g C_j^k \Phi_{N +
g}^{P_j}(P_n). \end{equation} Thus, we have
\begin{equation} Q = C M \iff C = Q M^{-1}. \end{equation}
Here, we are  assuming that the $P_j$ are distinct, hence that $M$
is invertible (Lemma \ref{M}).

 It follows that
\begin{equation} \det (\langle H_i, \bar{H}_j \rangle) = \det (Q^*
Q) \det M^{ -1}. \end{equation}

\end{proof}

\subsection{Configuration spaces and lifts of $\vec K^N_{FSH}$}

Using that $$\vec K^N_{FSH} (\zeta_1, \dots, \zeta_N) =
\tilde{\omega}_{\zcal}^{N-g} \wedge A_N^* \omega_{\Theta}^g  =
J_N^{-1} \tilde{\omega}_{\zcal}^{N} =  J_N^{-1}
\tilde{\omega}_{\zcal}^{N} , $$  it follows from Lemma
\ref{JNCLAIM} that the lift of $\vec K^N_{FSH}$ to the Cartesian
product $X^N$ is given by
$$\vec K^N_{FSH} (\zeta_1, \dots, \zeta_N) = \jcal_N (\zeta_1, \dots, \zeta_N)  \frac{1}{Z_N(\omega)}
    \frac{\exp \left(\frac{1}{2} \sum_{i \not=
j} G_{\omega}(\zeta_i, \zeta_j) \right) \prod_{j = 1}^N d^2
\zeta_j}{\left(\int_{X} e^{ N \int_{X} G_{\omega}(z,w)
d\mu_{\zeta}(w)} e^{ \int_{X} G_{\omega}(z,w) d\mu_{P(\zeta)}(w)}
d\nu(z) \right)^{N+1}}, $$ where (for a constant $B(N, g)$),
\begin{equation} \label{JN-} \begin{array}{lll} \jcal_N (\zeta_1, \dots, \zeta_N)
 & = &
 \; \cdot  F_N(\zeta_1, \dots, \zeta_N)  e^{- 2 N \int
\rho_{\omega}(w) d\mu_{\zeta}(w)} (\det M) |\det Q|^{-2} \;\;
\\ && \\ & = & B(N, g)  e^{ -(N + 1) \sum_{j = 1}^g
\rho_{\omega}(P_j(\zeta))}  ||\Phi_{h^N}^{P_0}||^{- 2} \left
|\left(\prod_{k = 1 }^N \;\prod_{j = 1}^g E(P_j,
\zeta_k)\right) \right|^2\\ && \\
& \cdot & ||  \theta(P_1 + \cdots + P_g - P_0 - \Delta)||^{-2}
\;\; \prod_{j = 1}^g |E(P_j, P_0)|^2 \;
 (\det \left( \langle \Phi_{N + g}^{P_j}, \Phi_{N + g}^{P_k} \rangle \right) \\ && \\
&& \left|\left(\prod_{\ell = 1}^g \prod_{j = 1}^N
E(P_{\ell},\zeta_j)\right) \left(  \prod_{j \not= n}  E(P_n,
P_j)\right) \right|^{-2} |\det D A_g|^2\;\;
\\ && \\ & = & B(N, g)  e^{ -(N + 1) \sum_{j = 1}^g
\rho_{\omega}(P_j(\zeta))} ||\Phi_{h^N}^{P_0}||^{- 2}\\ && \\
& \cdot & ||  \theta(P_1 + \cdots + P_g - P_0 - \Delta)||^{-2}
\;\; \prod_{j = 1}^g |E(P_j, P_0)|^2 \;
 (\det \left( \langle \Phi_{N + g}^{P_j}, \Phi_{N + g}^{P_k} \rangle \right) \\ && \\
&& \left| \left(  \prod_{j \not= n} E(P_n, P_j)\right)
 \right|^{-2}  |\det D A_g|^2\;\;

\end{array} \end{equation}

We next claim that
$$ ||  \theta(P_1 + \cdots + P_g - P_0 - \Delta)||^{-2} \;  \prod_{j = 1}^g |E(P_j, P_0)|^2 \;  |\det D A_g|^2$$ is a smooth positive function. To see this, we
recall that $\theta(P_1 + \cdots + P_g - P_0 - \Delta) = 0$
whenever there exists $Q_1 + \cdots + Q_{g -1} \in X^{(g-1)}$ so
that $P_1 + \cdots + P_g - P_0 = Q_1 + \cdots + Q_{g -1}. $ This
can occur if some $P_j = P_0$ and the $\{Q_j\}$ are the remaining
$\{P_k\}_{k \not= j}$, and such poles are cancelled by $\prod_{j =
1}^g |E(P_j, P_0)|^2$.  It can also happen at other points on the
Wirtinger variety $W^1_g$ where the representation fails to be
unique, and these poles are cancelled by $|\det D A_g|^2$.

We further claim that the factor
$$\det \left(\Phi_{N +
g}^{P_j}(P_k) \right)   \left|  \prod_{j \not= n} E(P_n,
P_j)\right|^{-2}.
$$ is  a smooth nowhere vanishing function on $X^{(g)}$ times
$|\prod_{j < n} E(P_j, P_n)|^{-2}. $ Thus, the $\det M$ factor
cancels `half' the poles of the second factor.

To explain this, we note that $ \det
 (\langle
H_i, \bar{H}_j \rangle_{\tilde{\omega}_{\zcal_N}}) = 0$ if and
only if $([s], P_1 + \cdots + P_g) \in \bcal^{(N)}_{N + g}$, the
branch locus of the map $\sigma_{\lcal}$. Indeed, $\{H_1, \dots,
H_g\}$ is a basis of the horizontal space at all points, so the
determinant can only vanish when $\tilde{\omega}_{\zcal_N}$
degenerates. Since it is the pullback of a non-degenerate form
under $\sigma_{\lcal}$, it only degenerates on the branch locus.
We recall that this is the locus where $\zeta_1 + \cdots + \zeta_N
+ P_1 + \cdots + P_g$ has multiplicity (i.e. at least two terms
coincide). We observe that also

\begin{equation}  \label{M}  \det M_{\vec P} = 0 \iff \exists
j \not= k: \;\;P_j = P_k. \end{equation}

Indeed,   $\det M = 0 $  if and only if the map
$$\{s \in H^0(X, \lcal_{N + g}): \dcal(s) \geq P_1 + \cdots +
P_g\}^{\perp} \to  \lcal_{N + g}[P_1] \oplus \cdots \oplus
\lcal_{N + g}[P_g] $$ on the given ortho-complement, sending $s
\to (s(P_1), \dots, s(P_g))$, has a kernel. The kernel is trivial
if the $P_j$ are distinct,  since $s(P_1) = \cdots = s(P_g) = 0$
implies that $s$ lies both in the given subspace and its
orthocomplement. When there are multiplicities, then there is a
non-trivial kernel;  one needs to supplement the map with the
derivatives of $s$ at the multiple points.

This completes the proof of Theorem \ref{MAINFSHPROP}.

\end{proof}

\section{\label{LDSECTIOIN} LDP for the projective linear ensemble: Proof of Theorem \ref{LD}}

In this section we prove Theorem \ref{LD} for the projective
linear ensemble. More precisely,  we reduce the proof to the
results of \cite{ZZ}.

For the sake of completeness, we  recall the definition of the
LDP: if $B(\sigma, \delta)$ denotes the ball of radius $\delta$
around $\sigma \in \mcal(\CP^1)$ in the Wasserstein metric, and
$B^o(\sigma,\delta)$ (respectively, $\overline{B(\sigma,\delta)}$)
denote its interior (respectively, its closure), then
\begin{equation} \label{LDPROP}
\begin{array}{lll} - \inf_{\mu \in B^o(\sigma, \delta)} \tilde I^{\omega, K}(\mu)
&\leq & \liminf_{N \to \infty}
\frac{1}{N^2} \log {\bf Prob} _N(B(\sigma, \delta)) \\ && \\
&\leq&  \limsup_{N \to \infty} \frac{1}{N^2} \log {\bf Prob}
_N(B(\sigma, \delta)) \leq    - \inf_{\mu \in \overline{B(\sigma,
\delta)}} \tilde I^{\omega, K} (\mu). \end{array}
\end{equation}

Once we have found the approximate rate functional, and have
expressed it in terms of Green's functions,  we can take its limit
precisely as in \cite{ZZ} and obtain the LDP.

For any ensemble, we express the lift of $\vec K^N$ to the
Cartesian product as
\begin{equation} \vec K^N(\zeta_1, \dots, \zeta_N) = D_N(\zeta_1, \dots,
\zeta_{N})\prod_{j = 1}^N d^2 \zeta_j. \end{equation}

\subsection{An approximate rate function: Proof of Proposition \ref{APPROXRATEa}}

The Proof of Proposition \ref{APPROXRATEa} is similar to that of
Lemma 18 of \cite{ZZ}, the principal new feature being the
coefficient function $F_N$.

We  express the JPC of Theorem \ref{JPDHG} in terms of the
empirical measures $\mu_{\zeta}$. The following Lemma proves
Proposition  \ref{APPROXRATEa}.

\begin{lem} \label{APPROXRATE} We have
$$\vec K_n^N(\zeta_1, \dots, \zeta_N) = \frac{1}{\hat{Z}_N(h)} e^{- N^2 \left( -\frac{1}{2}
\ecal^{\omega}_N(\mu_{\zeta}) + \frac{N+1}{N}  J_N^{\omega, \nu}
(\mu_{\zeta})\right)} \;\; \kappa_N,
$$
where (cf. Proposition \ref{FNFINAL}),
$$\begin{array}{lll} \kappa_N & = &
F_N(\zeta_1, \dots, \zeta_N) \prod_j d^2 \zeta_j.
\end{array}$$

\end{lem}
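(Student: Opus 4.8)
The plan is to obtain the statement by direct substitution into the Green's-function form of the JPC established in Lemma \ref{JPDINV} (equivalently Theorem \ref{JPDHG}(II)), recognizing the two functionals of Definition \ref{APPROXRF} inside that expression and sweeping the remaining factors into $\kappa_N$. Recall that Lemma \ref{JPDINV} writes the density of $\vec K^N$ relative to $\prod_{j} d^2\zeta_j$ as a constant multiple of $\exp\!\left(\tfrac12\sum_{i\neq j}G_\omega(\zeta_i,\zeta_j)\right)$ divided by $\left(\int_X e^{N\int_X G_\omega(z,w)d\mu_\zeta(w)}e^{\int_X G_\omega(z,w)d\mu_{P(\zeta)}(w)}d\nu(z)\right)^{N+1}$, multiplied by $F_N$ and the residual factor $e^{-2N\int\rho_\omega\,d\mu_\zeta}$. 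I would treat the numerator and denominator separately, identifying $\ecal^\omega_N$ and $J_N^{\omega,\nu}$ and collecting everything else.

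First I would handle the numerator. Since $\mu_\zeta=\tfrac1N\sum_{j=1}^N\delta_{\zeta_j}$, the definition of $\ecal^\omega_N$ in Definition \ref{APPROXRF} gives $\ecal^\omega_N(\mu_\zeta)=\tfrac1{N^2}\sum_{i\neq j}G_\omega(\zeta_i,\zeta_j)$, the off-diagonal sum being exactly what the integral over $X\times X\setminus D$ selects. Hence $\exp\!\left(\tfrac12\sum_{i\neq j}G_\omega(\zeta_i,\zeta_j)\right)=\exp\!\left(\tfrac{N^2}{2}\ecal^\omega_N(\mu_\zeta)\right)$, which accounts for the $-N^2\cdot(-\tfrac12\ecal^\omega_N(\mu_\zeta))$ contribution to the exponent.

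Next I would handle the denominator. Recognizing $\int_X G_\omega(z,w)\,d\mu_\zeta(w)=U^{\mu_\zeta}_\omega(z)$, the inner integral becomes $\int_X e^{NU^{\mu_\zeta}_\omega(z)}\,e^{\int_X G_\omega(z,w)d\mu_{P(\zeta)}(w)}\,d\nu(z)$, which by Definition \ref{APPROXRF} is $\big(\|e^{U^{\mu_\zeta}_\omega}e^{\int_X G_\omega(\cdot,w)d\mu_{P(\zeta)}(w)}\|_{L^N(\nu)}\big)^N=e^{N J_N^{\omega,\nu}(\mu_\zeta)}$. Raising to the power $N+1$ yields $e^{N(N+1)J_N^{\omega,\nu}(\mu_\zeta)}=e^{N^2\frac{N+1}{N}J_N^{\omega,\nu}(\mu_\zeta)}$, which supplies the $-N^2\cdot\tfrac{N+1}{N}J_N^{\omega,\nu}(\mu_\zeta)$ contribution after the quotient. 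Combining the two computations gives $\vec K^N=\tfrac1{\hat Z_N(h)}\,e^{-N^2 I_N^{\omega,\nu}(\mu_\zeta)}\cdot(\text{leftover})$, where the leftover is $F_N(\zeta_1,\dots,\zeta_N)\prod_j d^2\zeta_j$ together with the residual $\rho_\omega$-factor and a change of normalizing constant from $Z_N$ to $\hat Z_N$; I would package the $F_N\,\prod_j d^2\zeta_j$ part as $\kappa_N$.

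The delicate point, and the only place beyond routine algebra, is the sub-leading bookkeeping. The factor $e^{\int G_\omega d\mu_{P(\zeta)}}$ in the denominator carries the mass-$g$ divisor $\mu_{P(\zeta)}$ to the first power, whereas the $L^N(\nu)$-norm in $J_N^{\omega,\nu}$ raises its integrand to the $N$th power; moreover Lemma \ref{JPDINV} leaves the residual $e^{-2N\int\rho_\omega\,d\mu_\zeta}$ coming (via Proposition \ref{GREEN} and Lemma \ref{NORMSUPHI}) from the passage between the section-norm and Green's-function descriptions. I would verify that each of these affects $\log\vec K^N$ by at most order $N$ — the $\mu_{P(\zeta)}$ term because that divisor has bounded mass $g$, and the $\rho_\omega$ term manifestly — so that after applying $\tfrac1{N^2}\log$ they are $O(1/N)$ and do not disturb the $N^2$-rate; this is precisely the source of the $O(1/N)$ error recorded in Proposition \ref{APPROXRATEa}. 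These lower-order factors, along with $\prod_j d^2\zeta_j$ and $F_N$, are what is absorbed into $\kappa_N$. With this identification the remainder of the argument is the same as in Lemma 18 of \cite{ZZ}, the only genuinely new ingredient being the coefficient $F_N$, whose $\tfrac1{N^2}\log$ evaporates in the limit as remarked after Theorem \ref{JPDHG}.
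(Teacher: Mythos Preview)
Your proposal is correct and follows essentially the same route as the paper: both start from the Green's-function form of Lemma \ref{JPDINV}, identify the numerator with $\tfrac{N^2}{2}\ecal^\omega_N(\mu_\zeta)$ and the denominator with $N(N+1)J_N^{\omega,\nu}(\mu_\zeta)$, and then absorb the remaining factors into $\kappa_N$. Your discussion of the sub-leading $\mu_{P(\zeta)}$ and $\rho_\omega$ bookkeeping is in fact more explicit than the paper's own proof, which simply writes ``we then combine the rest of the factors into $\kappa_N$''; this extra care properly anticipates the $O(1/N)$ statement of Proposition \ref{APPROXRATEa}.
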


\begin{proof} We first observe that the main factor in (II) of Theorem \ref{JPDHG} can be rewritten in terms
of the empirical measure to get,
$$ \frac{\exp \left(\frac{1}{2} \sum_{i \not=
j} G_{\omega}(\zeta_i, \zeta_j) \right) }{\left(\int_{X} e^{ N
\int_{X} G_{\omega}(z,w) d\mu_{\zeta}(w)} e^{ \int_{X}
G_{\omega}(z,w) d\mu_{P(\zeta)}(w)} d\nu(z) \right)^{N+1}} = e^{-
N^2 I_N^{\omega, \nu} (\mu_{\zeta})}. $$ Indeed, by taking the
right side as the definition of $I_N^{\omega, \nu}$, we get
\begin{equation*} \label{INV2}
    \begin{array}{lll}
   I_N^{\omega, \nu}
& = &  - \frac{1}{N^2}\sum_{i \not= } \frac{1}{2}
G_{\omega}(\zeta_i, \zeta_j) + \frac{N+1}{N^2} \log \left(\int_{X}
e^{N \int_{X} G_{\omega}(z,w) d\mu_{\zeta} } e^{ \int_{X}
G_{\omega}(z,w) d\mu_{P(\zeta)}(w)}  d\nu(z) \right)\nonumber \\
&& \quad
\quad \\
 & = &
 - \frac{1}{N^2}  \frac{1}{2} \int_{X \times X
\backslash D} G_{\omega}(z,w) d\mu_{\zeta}(z) d\mu_{\zeta}(w) +
\frac{N+1}{N^2} \log \left(\int_{X} e^{N
U^{\mu_{\zeta}}_{\omega}(z) } e^{U^{\mu_{P(\zeta)}(z)}} d\nu(z)
\right) \nonumber  \\ && \\ & = & - \frac{1}{N^2} \left(
 - \frac{1}{2}  \ecal_N^{\omega}(\mu_{\zeta}) +  \frac{N(N + 1)}{N^2}   J_N^{\omega, \nu}
 (\mu_{\zeta})\right),
\end{array}
\end{equation*}
as one sees by comparing with Definition \ref{APPROXRF}. We then
combine the rest of the factors into $\kappa_N$.
  \end{proof}

\subsection{Properties of the rate function} The properties of the
rate function in higher genus are similar to those proved in
\cite{ZZ} Section 6 in genus zero, and the proofs are the same, so
we state them rapidly and refer to \cite{ZZ} for the proof.

\begin{prop} \label{GOODRATEF} (see \cite{ZZ}, Proposition 24)  The function $I^{\omega, K}$ of
(\ref{IGREEN}) has the following properties:

\begin{enumerate}
\item It is a lower-semicontinuous functional. \item It is
    strictly convex.
\item Its unique minimizer is the equilibrium measure $\nu_{h,
K}$. \item Its minimum value equals $\frac{1}{2}  \log \mbox{\rm
Cap}_{h} (K).$
\end{enumerate}

\end{prop}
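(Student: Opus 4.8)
The plan is to treat the two pieces of $I^{\omega,K}(\mu)=-\tfrac12\ecal_\omega(\mu)+\sup_K U^\mu_\omega$ separately and then recombine them, following verbatim the genus-zero analysis of \cite{ZZ}. Since $X$ is compact, $\mcal(X)$ is weak-$*$ compact, so once lower semicontinuity (1) is established, existence of a minimizer is automatic; strict convexity (2) then forces uniqueness, and the remaining content of (3)--(4) is to \emph{identify} this minimizer with the Green equilibrium measure $\nu_{\omega,K}$ (the minimizer of $-\ecal_\omega$ on $\mcal(K)$) and to evaluate $I^{\omega,K}$ there. All three of the required analytic inputs---the Green's function $G_\omega$, its potential $U^\mu_\omega$, and the capacity $\mbox{Cap}_\omega(K)$---are available from \S\ref{GRF}.

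For strict convexity (2): the term $\sup_K U^\mu_\omega=\sup_{z\in K}\int_X G_\omega(z,w)\,d\mu(w)$ is a supremum of functionals \emph{affine} in $\mu$, hence convex, so it suffices to show $-\tfrac12\ecal_\omega$ is strictly convex. This reduces to negative-definiteness of $G_\omega$ on signed measures of total mass zero. Given $\mu_0,\mu_1\in\mcal(X)$, set $\sigma=\mu_1-\mu_0$, so $\sigma(X)=0$ and $dd^c U^\sigma_\omega=\sigma$; integrating by parts with $\int_X f\,dd^c f=-\int_X df\wedge d^c f$ and $df\wedge d^c f\ge 0$ yields
\[
\ecal_\omega(\sigma)=\int_X U^\sigma_\omega\,dd^c U^\sigma_\omega=-\int_X dU^\sigma_\omega\wedge d^c U^\sigma_\omega\le 0,
\]
with equality iff $U^\sigma_\omega$ is constant, i.e. iff $\sigma=0$. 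Thus $t\mapsto-\tfrac12\ecal_\omega(\mu_0+t\sigma)$ has strictly positive second derivative $-\ecal_\omega(\sigma)>0$ for $\sigma\neq0$, and adding the convex sup term keeps $I^{\omega,K}$ strictly convex.

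For lower semicontinuity (1): the energy term is handled by the standard device of writing $-G_\omega$ (which is l.s.c. and bounded below, since $G_\omega\sim 2\log|z-w|\to-\infty$ on the diagonal) as an increasing limit of bounded continuous kernels; each truncated double integral is weak-$*$ continuous in $\mu$, so their supremum $-\tfrac12\ecal_\omega$ is weak-$*$ l.s.c. The delicate term is $\sup_K U^\mu_\omega$: because $w\mapsto G_\omega(z,w)$ is only upper semicontinuous (and bounded above), each $\mu\mapsto U^\mu_\omega(z)$ is merely weak-$*$ u.s.c., so the supremum over $z\in K$ is a supremum of u.s.c. affine functionals and its l.s.c. is not formal. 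Here I would invoke the continuity (lower-envelope) principle of potential theory together with the hypotheses that $K=\supp\nu$ is Bernstein--Markov (\ref{BM}) and non-thin at each of its points (\ref{REGULAR}); these regularity assumptions guarantee that the equilibrium potentials are continuous, that the supremum is attained, and that it behaves lower-semicontinuously along weakly convergent sequences.

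For (3)--(4) I would use Frostman's variational characterization of $\nu=\nu_{\omega,K}$: there is a constant $F$ with $U^\nu_\omega=F$ on $\supp\nu$ and $U^\nu_\omega\le F$ on $K$, where $F=\ecal_\omega(\nu)=\log\mbox{Cap}_\omega(K)$. Then $\sup_K U^\nu_\omega=F$, so $I^{\omega,K}(\nu)=-\tfrac12 F+F=\tfrac12\log\mbox{Cap}_\omega(K)$, giving the minimum value. That $\nu$ is in fact the global minimizer over all of $\mcal(X)$ I would verify by a one-sided directional-derivative computation at $\nu$ in an arbitrary direction $\mu-\nu$: the energy part contributes $-\int U^\nu_\omega\,d(\mu-\nu)=-\int U^\mu_\omega\,d\nu+F$ (using symmetry of $G_\omega$), while Danskin's theorem gives the derivative of the sup term as the supremum of $U^{\mu-\nu}_\omega$ over the contact set $\{z\in K:U^\nu_\omega(z)=F\}$, which contains $\supp\nu$; since a supremum over a set carrying the probability measure $\nu$ dominates the $\nu$-average, the two contributions cancel to something $\ge0$, and strict convexity promotes this first-order optimality to uniqueness of the global minimizer. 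The step I expect to be the main obstacle is precisely the nonlinear, non-smooth term $\sup_K U^\mu_\omega$: unlike the classical weighted-energy problem, where the external field enters linearly, this supremum is built from merely upper-semicontinuous potentials, and controlling both its lower semicontinuity and its directional derivative is exactly where hypotheses (\ref{BM}) and (\ref{REGULAR}) are indispensable and where I would lean most heavily on \cite{ZZ}.
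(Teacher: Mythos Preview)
The paper does not give its own proof of this proposition: immediately before the statement it says ``the properties of the rate function in higher genus are similar to those proved in \cite{ZZ} Section 6 in genus zero, and the proofs are the same, so we state them rapidly and refer to \cite{ZZ} for the proof,'' and the statement itself carries the parenthetical ``(see \cite{ZZ}, Proposition 24).'' So there is nothing in the present paper to compare your argument to beyond that citation.

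Your sketch is a faithful reconstruction of the \cite{ZZ} argument and is essentially correct. The decomposition into a strictly convex energy part and a convex sup-of-affines part, the negative-definiteness of $G_\omega$ on mass-zero signed measures via $\int U^\sigma_\omega\,dd^c U^\sigma_\omega=-\int dU^\sigma_\omega\wedge d^c U^\sigma_\omega$, the truncation argument for l.s.c.\ of $-\tfrac12\ecal_\omega$, the Frostman identification $F=\ecal_\omega(\nu_{\omega,K})=\log\mbox{Cap}_\omega(K)$ giving $I^{\omega,K}(\nu_{\omega,K})=\tfrac12\log\mbox{Cap}_\omega(K)$, and the first-variation/Danskin computation showing the directional derivative at $\nu_{\omega,K}$ is $\ge 0$ in every direction---all of this matches what \cite{ZZ} does. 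You have also correctly isolated the one genuinely delicate point (l.s.c.\ of $\mu\mapsto\sup_K U^\mu_\omega$, where each $U^\mu_\omega(z)$ is only u.s.c.\ in $\mu$) and correctly identified that this is exactly where the non-thinness hypothesis (\ref{REGULAR}) enters: it guarantees continuity of equilibrium potentials on $K$ and the validity of the lower-envelope theorem, which is what \cite{ZZ} uses at this step. One small caution: Danskin's theorem in the form you invoke requires joint continuity of $(z,t)\mapsto U^{\mu_t}_\omega(z)$ on the contact set, which again comes from the regularity hypotheses and the continuity principle; you flag this, but it is worth being explicit that this is not automatic for general $K$.
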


 Set
\begin{equation}
    \label{eq-ofer1a}
    E_0(h)=\inf_{\mu\in \mcal(X)} I^{\omega,K}(\mu), \quad
    \tilde I^{\omega,K}=I^{\omega,K}-E_0(h)\,.
\end{equation}
The infimum $\inf_{\mu\in \mcal(X)} I^{\omega,K}(\mu)$ is achieved
at the Green's equilibrium measure $\nu_{\omega, K}$ with respect
to $(\omega, K)$, and   $E_0(h)= \frac12 \log \mbox{Cap}_{\omega}
(K)$, where (as above) $\mbox{Cap}_{\omega}(K)$ is the Green's
capacity with respect to $\omega$.  For background we refer to
\cite{ZZ} and its references.

\subsection{\label{LDSECT} Completion of proof of Theorem \ref{LD}}

%
%

Given Theorem \ref{MAINFSHPROP},  Lemma \ref{APPROXRATE} and the
arguments of \cite{ZZ}, the main remaining complication in proving
Theorem \ref{LD} is to deal with the singular factor  $\left|
\prod_{j < n} E(P_n, P_j)\right|^{-2}$ described in the statement
of  Theorem \ref{MAINFSHPROP}. Of course it must be cancelled by
the other factors since the form is smooth, but we need to make
the necessary estimates to take the limit as $N \to \infty$.  We
recall that this factor arises since the pullback of the
Fubini-Study volume form on $\PP H^0(X, \lcal_{N + g})$ to $\PP
\ecal_N$ has degeneracies on the branch locus, while those of the
Fubini-Study-fiber volume forms do not.

The cancellation of this factor comes from the fact (discussed in
\S \ref{XN}) that $\vec K^N_{FSH}$ is a smooth non-degenerate form
on $\PP \ecal_N \simeq X^{(N)}$ which is bundle-like and which
contains the lift of a factor on $X^{(g)}$. This factor can be
expressed in coordinates $P_1 + \cdots + P_g$ in the image of the
Abel map and its lift to $X^g$ contains a Vandermonde type factor
 $\left| \prod_{j < n} E(P_n, P_j)\right|^{2}$ cancelling the
 singular one above. To determine the ratio, we need to change
 coordinates again from $\zeta_1 + \cdots + \zeta_N$ to $P_1 +
 \cdots  + P_g$ and $N - g$ remaining coordinates along the fiber,
 $\PP H^0(X, \ocal( (N + g)P_0 - (P_1 + \cdots + P_g))$. To prove
 Theorem \ref{MAINFSHPROP}, i.e. to study empirical measures, we
 need the remaining $N - g$ coordinates to be divisor coordinates
 rather than coefficients relative to a basis. Indeed, this was
 the main reason for introducing $\lcal_{N + g}$ in the first
 place.

 We therefore seek an $X^{N - g}$-valued  fiber coordinate $\eta_1 +  \cdots + \eta_{N - g}$
 which are coordinates of $N - g$ zeros of sections in the fiber $\PP H^0(X, \ocal( (N + g)P_0 - (P_1 + \cdots +
 P_g))$. Since $\dim \PP H^0(X, \ocal( (N + g)P_0 - (P_1 + \cdots +
 P_g)) = N - g$, a section is specified up to scalar multiples by
 $N - g$ zeros.  Equating $\PP \ecal_N = X^{(N)}$, it is equivalent  to
define (almost everywhere) an analytic function
$$\vec \zeta (\vec \eta, \vec P)  \in  X^{(N)}: A_N(\zeta) = P_1 + \cdots + P_g $$
whose image is an open dense (indeed, Zariski open) subset of
$X^{(N)}$. There are of course ${N \choose g}$ ways to select
 s subset of
 $N - g$ zeros from $\zeta_1 + \cdots + \zeta_N$, and what we are
 claiming is that there exists a well-defined analytic
 branch of the correspondence $X^{(N)} \to X^{(N - g)} \times
 X^{(g)}$ whose graph is given by
 $$\{(\zeta_1 + \cdots + \zeta_N; (\zeta_{j_1} + \cdots + \zeta_{j_{N-g}}: P_1 + \cdots + P_g) \} \subset X^{(N)} \times (X^{(N - g) } \times X^{(g)}),
 $$
where as usual $P_1 + \cdots + P_g = A_N(\zeta_1 + \cdots +
\zeta_N).$ Existence of such a branch follows from the fact that
correspondence is a covering map on the complement of the branch
locus, and the complement is  a Zariski open set (see also
\cite{Mat}).

We recall that the expression for $\vec K^N_{FSH}$ in Theorem
\ref{MAINFSHPROP} is for the pull back of this form to $X^N$ in
the local coordinates $\zeta_1, \dots, \zeta_N$. We now use the
local coordinates $\eta_1, \eta_2, \dots, \eta_{N-g}, P_1, \dots,
P_g$ instead. By definition of a Fubini-Study-fiber bundle
probability measure, $\vec K^N_{FSH}$ has the wedge product of a
form $d \sigma$ lifted from $X^{(g)}$ and a smooth Fubini-Study
volume form along the fibers. The former is a smooth positive
multiple of $|\prod_{j \not= k = 1}^g E(P_j, P_k)|^2 \prod_{j =
1}^g d P_j \wedge d \bar{P}_j$ and a smooth form in $\eta$.
 It follows that the factor  $\left| \prod_{j < n} E(P_n,
 P_j)\right|^{2}$ in the formula of Theorem
\ref{MAINFSHPROP} is cancelled by the same Vandermonde factor
arising from the expression of $\vec K^N_{FSH}$ in the coordinates
$(\vec \eta, \vec P)$. This changes the definition of $\kappa_N$
in Lemma \ref{APPROXRATE} and Theorem \ref{MAINFSHPROP} to
\begin{equation} \label{KAPPATILDE} \tilde{\kappa}_N =
\tilde{\jcal}_N \prod_{j = 1}^g d P_j \wedge d \bar{P}_j \wedge
\prod d_{j = 1}^{N - g} d \eta_j \wedge d \bar{\eta}_j,
\end{equation}  with
\begin{equation} \label{JNTILDE} \begin{array}{lll} \tilde{\jcal_N} (\eta_1, \dots, \eta_{N-g}, P_1 \cdots, P_g)
 & = &
  B(N, g)  e^{ -(N + 1) \sum_{j = 1}^g
\rho_{\omega}(P_j(\zeta))} ||\Phi_{h^N}^{P_0}||^{- 2}\\ && \\
& \cdot & ||  \theta(P_1 + \cdots + P_g - P_0 - \Delta)||^{-2}
\;\; \prod_{j = 1}^g |E(P_j, P_0)|^2 \;
\\ && \\ && (\det \left( \langle \Phi_{N + g}^{P_j}, \Phi_{N + g}^{P_k} \rangle \right) \left| \left(  \prod_{j < n} E(P_n, P_j)\right)
 \right|^{-2}  |\det D A_g|^2,\;\;
\end{array} \end{equation}

\subsubsection{LD upper bound}

We first sketch the proof of the  upper bound part of the large
deviation principle from \cite{ZZ}: \begin{lem}
\begin{equation} \label{UBTOPROVE}
    \lim_{\delta \downarrow 0} \limsup_N \frac{1}{N^2}  \log \; {\bf
Prob}_N (B(\sigma, \delta)) \leq - \tilde I^{\omega,K}(\sigma).
\end{equation}
\end{lem}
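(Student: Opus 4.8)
The plan is to run the standard potential-theoretic large-deviation upper bound of \cite{ZZ}, the only genuinely new work being the control of the extra factor produced by bosonization and by the Jacobian relating the two volume forms. Writing $\mu^{-1}(B(\sigma,\delta))\subset X^{(N)}$ for the set of configurations whose empirical measure lies in the ball and using the representation of Lemma \ref{APPROXRATE}, we have
$$ {\bf Prob}_N(B(\sigma,\delta)) = \frac{1}{\hat Z_N}\int_{\mu^{-1}(B(\sigma,\delta))} e^{-N^2 I_N^{\omega,\nu}(\mu_\zeta)}\,\kappa_N . $$
Since $\kappa_N\ge 0$, bounding the exponential by its supremum over the domain of integration and taking $\frac{1}{N^2}\log$ gives
$$ \frac{1}{N^2}\log {\bf Prob}_N(B(\sigma,\delta)) \le -\frac{1}{N^2}\log\hat Z_N + \sup_{\mu_\zeta\in B(\sigma,\delta)}\bigl(-I_N^{\omega,\nu}(\mu_\zeta)\bigr) + \frac{1}{N^2}\log\!\int_{\mu^{-1}(B(\sigma,\delta))}\kappa_N . $$
Thus three things must be established: (i) a lower bound $I_N^{\omega,\nu}(\mu_\zeta)\ge I^{\omega,K}(\sigma)-o(1)$ valid as $N\to\infty$ and $\delta\downarrow 0$; (ii) that the total mass of $\kappa_N$ is $e^{o(N^2)}$; and (iii) that $\frac{1}{N^2}\log\hat Z_N\to -E_0(\omega)$, which supplies the shift turning $I^{\omega,K}$ into $\tilde I^{\omega,K}$.

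For (i) I would bound the two pieces of $I_N=-\frac12\ecal_N^\omega(\mu_\zeta)+\frac{N+1}{N}J_N^{\omega,\nu}(\mu_\zeta)$ exactly as in \cite{ZZ}. For the energy, replace $G_\omega$ by the bounded continuous truncation $G_\omega^{(M)}:=\max(G_\omega,-M)$; since $\mu_\zeta\times\mu_\zeta$ charges the diagonal only with mass $1/N$, one gets $\ecal_N^\omega(\mu_\zeta)\le \int_{X\times X}G_\omega^{(M)}\,d\mu_\zeta\,d\mu_\zeta + o(1)$, and weak convergence $\mu_\zeta\to\sigma$ (as $\delta\downarrow0$) together with the monotone limit $G^{(M)}_\omega\downarrow G_\omega$ yields $\limsup \frac12\ecal_N^\omega \le \frac12\ecal_\omega(\sigma)$. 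For the $L^N$ term, the Bernstein--Markov hypothesis \eqref{BM} and a Laplace-type estimate give $\liminf_N J_N^{\omega,\nu}(\mu_\zeta)\ge \sup_K U^\sigma_\omega$, the $g$ extra masses $\mu_{P(\zeta)}$ entering only through a bounded, $N$-independent factor that drops out of the $\frac1N\log$ limit. Combining, $\liminf_N I_N\ge -\frac12\ecal_\omega(\sigma)+\sup_K U^\sigma_\omega = I^{\omega,K}(\sigma)$.

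Step (ii) is the heart of the new argument and the main obstacle: showing that $\kappa_N$ (equivalently, after the change to the mixed coordinates $(\vec\eta,\vec P)$, the factor $\tilde\jcal_N$ of \eqref{JNTILDE}) integrates to $e^{o(N^2)}$. By Theorem \ref{MAINFSHPROP} every constituent of $\tilde\jcal_N$ --- the exponential $e^{-(N+1)\sum_j\rho_\omega(P_j)}$, the coherent-state norm $\|\Phi^{P_0}_{N+g}\|^{-2}$, the theta factor $\|\theta(P_1+\cdots+P_g-P_0-\Delta)\|^{-2}$, the Gram determinant $\det M_{\vec P}$, the Jacobian $|\det DA_g|^2$, and the prime-form products --- involves only the fixed number $g$ of points $P_j$ and geometric quantities growing at most like $e^{CN}$ (the coherent-state norm) or polynomially, hence each contributes $o(N^2)$ to $\frac{1}{N^2}\log$. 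The delicate point is that several of these are individually singular, namely the reciprocal theta factor and the Vandermonde $|\prod_{j<n}E(P_n,P_j)|^{-2}$; but, as established in the cancellation analysis of \S\ref{MOREZEROS} and in the discussion preceding this lemma, these poles cancel exactly --- the $\theta^{-1}$ and coherent-state poles against $\prod_j|E(P_j,P_0)|^2$ and $|\det DA_g|^2$, and the residual $|\prod_{j<n}E(P_n,P_j)|^{-2}$ against the Vandermonde factor $|\prod_{j<n}E(P_n,P_j)|^{2}$ appearing when $\vec K^N_{FSH}$ is written in the coordinates $(\vec\eta,\vec P)$. Hence $\tilde\jcal_N$ is, up to an $e^{o(N^2)}$ prefactor, a bounded smooth density times a fixed smooth volume form on the compact manifold $X^{(N)}\simeq \PP\ecal_N$, and its integral is $e^{o(N^2)}$.

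Finally (iii) follows as in \cite{ZZ}: applying the same two estimates with $B(\sigma,\delta)$ replaced by all of $\mcal(X)$ and using $\int_{X^{(N)}}\vec K^N_{FSH}=1$ forces $\frac{1}{N^2}\log\hat Z_N=\sup_\mu(-I_N)+o(1)\to -\inf_\mu I^{\omega,K}(\mu)=-E_0(\omega)$. Letting $N\to\infty$, then $\delta\downarrow0$, then $M\uparrow\infty$ in the displayed inequality, and invoking the lower semicontinuity of $I^{\omega,K}$ and the identity $\tilde I^{\omega,K}=I^{\omega,K}-E_0(\omega)$ of Proposition \ref{GOODRATEF}, we obtain
$$ \lim_{\delta\downarrow0}\limsup_N \frac{1}{N^2}\log {\bf Prob}_N(B(\sigma,\delta)) \le -\bigl(I^{\omega,K}(\sigma)-E_0(\omega)\bigr) = -\tilde I^{\omega,K}(\sigma), $$
as claimed. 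The only step requiring real care, and the only one essentially new relative to \cite{ZZ}, is the subexponential control of $\tilde\jcal_N$ together with the pole cancellations in (ii).
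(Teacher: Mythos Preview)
Your proposal follows essentially the same route as the paper's proof: write $\mathbf{Prob}_N(B(\sigma,\delta))$ via Lemma~\ref{APPROXRATE}, pull the exponential out by its supremum, control the energy term via truncation $G_\omega^M$, control the $J_N$ term via Bernstein--Markov, and show the residual form integrates to $e^{o(N^2)}$. Steps (i) and (iii) match the paper exactly (the paper, like you, defers the identification of $\lim\Theta_N$ to the argument that $\mathbf{Prob}_N$ has mass one, using both the upper and lower bounds).

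The one genuine gap is in your step (ii). You assert that after the pole cancellations $\tilde\jcal_N$ is, up to an $e^{o(N^2)}$ prefactor, a \emph{bounded} smooth density on a compact manifold. This is not correct: the factor $\det M_{\vec P}\cdot\bigl|\prod_{j<n}E(P_n,P_j)\bigr|^{-2}$ is indeed smooth (the Gram determinant vanishes to order two on each diagonal $P_j=P_k$, matching the pole), but its $N$-dependence and $P$-dependence do not decouple. At well-separated configurations $\det M_{\vec P}\sim N^g$, while near the diagonals the scale $1/\sqrt N$ enters, so one cannot simply factor off an $N$-dependent constant and be left with something uniformly bounded. The paper handles this in Lemma~\ref{ZEROa} by splitting $X^g$ into the ``well-separated'' set $\{d(P_j,P_k)\ge \log N/\sqrt N\}$ and its complement, using the off-diagonal decay of coherent states on the first set and a row-reduction argument (writing $\Phi^{P_j}-\Phi^{P_k}=(P_j-P_k)F_N(P_j,P_k)$ and invoking derivative bounds $\|\nabla\Phi^{P}\|\le CN^2$ with Hadamard's inequality) on the second. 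That is the missing piece; once you supply it, your argument coincides with the paper's.
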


\begin{proof}

The first step is to prove the following:  let $\epsilon>0$ and
let $K = \mbox{supp} \nu$.  If $\nu$ satisfies
    the Bernstein-Markov condition (\ref{BM}),
then there exists a $N_0=N_0(\epsilon)$ such that for all $N>N_0$
and all $\mu_\zeta\in \mcal(X)$, (with $\rho_{\omega}$ as in
(\ref{rho}))
$$ \log ||e^{U^{\mu_\zeta}_{\omega}} e^{\frac{1}{N} U^{\mu_{P(\zeta)}}} ||_{L^N(\nu) }
\geq \sup_{z \in K} (U^{\mu_{\zeta}}_{\omega}) - \epsilon \,.$$

This is similar to Lemma 30 of \cite{ZZ}, with two modifications.
First, there is the new factor $\frac{1}{N} U^{\mu_{P(\zeta)}}$
(which is absorbed in the $\epsilon$).  Second, the
 Bernstein-Markov assumption is now a uniform estimate comparing $L^2$ norms
    and sup norms of sections $s \in H^0(\C, \xi)$ as $\xi $
    varies over $\mbox{Pic}^N$:
\begin{equation} \sup_{z \in K} |s(z)|_{h_{\xi}} \leq C_{\epsilon}
e^{\epsilon N} \left(\int_K |s(z)|^2_{h_{\xi(\zeta)}} d \nu(z)
\right)^{1/2}, \;\;\; \forall \xi \in \mbox{Pic}^N, \;\; s \in
H^0(X, \xi).
\end{equation}
Here, $h_{\xi(\zeta)}$ is the admissible metric on the line bundle
$\xi(\zeta)$ where $s_{\zeta} \in H^0(X, \xi(\zeta))$.  By Lemma
\ref{NORMSUPHI} we may write,
$$|s_\zeta(z)|^2_{h_{\xi}} =
e^{N (U_{\omega}^{\mu_\zeta}(z) - \frac{1}{N}
U_{\omega}^{\mu_{P(\zeta)}} } , \;\; \forall \zeta \in X^{(N)}.
$$ Hence,  \begin{equation*}
    \begin{array}{lll} ||e^{U^{\mu_\zeta}_{\omega}} e^{\frac{1}{N} U^{\mu_{P(\zeta)}}} ||_{L^N(\nu)}
    & = &
        \left(\int_K |s_\zeta(z)|^2_{h_{\xi(\zeta)}} d \nu(z)
\right)^{1/N}\\ && \\
& \geq & \left( C^{-1}_{\epsilon} e^{- N \epsilon} \sup_{z \in K}
|s_\zeta(z)|_{h_{\xi(\zeta)}}^2  \right)^{\frac{1}{N}},
\end{array} \end{equation*} and
$$\log ||e^{U^{\mu_\zeta}_{\omega}} e^{\frac{1}{N}
U^{\mu_{P(\zeta)}}}||_{L^N(\nu)} \geq \sup_{z \in K}
U^{\mu_{\zeta}}_{\omega}   - \epsilon + \frac1N\log C_\epsilon,
$$ for all $\epsilon > 0$.

Write \begin{equation} \label{THETAN} \Theta_N=-\frac{1}{N^2}
\log\hat{Z}_N(h)\,.\end{equation} In \S \ref{CONSTANT} we show
that (as in \cite{ZZ}), $\Theta_N\to_{N\to\infty} \log
\mbox{Cap}_{\omega}(K)$.

By  Lemma \ref{JPDINV} and Lemma \ref{APPROXRATE},
\begin{equation}
    \label{eq-180209b}
    \frac{1}{N^2}  \log \; {\bf Prob}_N (B(\sigma, \delta))
= \frac{1}{N^2} \log \int_{\zeta \in X^{(N)} : \mu_{\zeta} \in
B(\sigma, \delta)} e^{- N^2 (I_N(\mu_{\zeta} ) } \tilde{\kappa}_N
+\Theta_N,
\end{equation}
where $\tilde{\kappa}_N$ is the smooth,  non-negative   $(N, N)$
form defined in (\ref{KAPPATILDE}), and $I_N$ is the approximate
rate function.

  Fix $M\in \R$ and let $G_{\omega}^M=G_{\omega}\vee (-M)$
be the truncated Green function and let $\ecal_{\omega}^M$ be the
Green's energy associated to the truncated Green's function. As in
\cite{ZZ}, $G_{\omega}^M$ is continuous on $X\times X$ and
\begin{eqnarray*}
    -\frac{1}{N^2}\sum_{i<j}G_{\omega}(\zeta_i,\zeta_j) &\geq&
 \ecal_{\omega}^M(\mu_{\zeta}) -\frac{C(M)}{N} ,
\end{eqnarray*}
where the constant $C(M)$ does not depend on $\xi$.

It follows that, for any $\epsilon>0$ and all $N>N_0(\epsilon)$,
\begin{equation*}\begin{array}{lll}
 \frac{1}{N^2}  \log \; {\bf Prob}_N (B(\sigma, \delta))
 &\leq & \frac{1}{N^2}\log\int_{\xi\in X^{(N)}: \mu_{\zeta}\in B(\sigma,\delta)}
 e^{\frac{N^2}{2}\ecal_{\omega}^M(\mu_\xi)-N^2J_{\omega}^K(\mu_{\zeta}) }
\tilde{\kappa}_N
\\ && \\ &+&
  (
  \Theta_N
 +\frac{C'(M)}{N}+\epsilon),
\end{array} \end{equation*}
 for some constant $C'(M)$.

 It follows that
$$ \begin{array}{lll}   \limsup_N \frac{1}{N^2}  \log \; {\bf
Prob}_N (B(\sigma, \delta)) &\leq & \limsup_{N\to\infty} \Theta_N
\\ && \\ && + \limsup_{\delta \downarrow 0} \sup_{\mu \in B(\sigma, \delta)}
- \left( - \frac12\ecal_{\omega}^M(\sigma) + J_{\omega}^K(\sigma)
\right) \\ && \\ && + \left| \limsup_N \frac{1}{N^2} \log
\int_{X^{(N)}} \tilde{\kappa}_N \right|\,.
\end{array}$$
 We now claim:
 \begin{lem} \label{ZEROa} Let $\tilde{\kappa}_N$ be the smooth $(N, N)$ form defined in (\ref{KAPPATILDE}). Then
$$ \frac{1}{N^2} \left| \log \int_{\tilde{X}^{(N)}} \tilde{\kappa}_N \right|=
 O(\frac{\log N}{N}).
$$
\end{lem}

\begin{proof} Omitting the constant $B(N, g) \left(\det ( \nabla_k P_n) \right), ||\Phi_{h^N}^{P_0}||^{- 2}$   (which may be absorbed into the overall
normalizing constant $Z_N$), the statement comes down to showing
that
$$\begin{array}{l}  \frac{1}{N^2} | \log  \int_{\tilde{X}^{N}}
\;  e^{ -(N + 1) \sum_{j = 1}^g \rho_{\omega}(P_j(\zeta))}\\ \\
 ||  \theta(P_1 + \cdots + P_g - P_0 -
\Delta)||^{-2} \;\; \prod_{j = 1}^g |E(P_j, P_0)|^2  |\det D A_g|^2 \\ \\
(\det \left( \langle \Phi_{N + g}^{P_j}, \Phi_{N + g}^{P_k}
\rangle \right)^*
 \left| \left(  \prod_{j < n} E(P_n, P_j)\right) \right|^{-2}  \prod_{j = 1}^{N - g} d^2
\eta_j \prod_{j = 1}^g d P_j | = O(\frac{\log N}{N}).
\end{array}$$
We observe that the integrand is a smooth function only of $P_1 +
\cdots + P_g \in X^{(g)}$ and only the factor $e^{ -(N + 1)
\sum_{j = 1}^g \rho_{\omega}(P_j(\zeta))} (\det \left( \langle
\Phi_{N + g}^{P_j}, \Phi_{N + g}^{P_k} \rangle \right)$ depends on
$N$.

We first integrate out the $\eta_j$ variables and obtain the
Lebesgue volume of $X^{(N - g)}$ in the $\eta_j$ coordinates. It
lifts back to $X^{N - g}$ as a product measure and therefore (as
in \cite{ZZ}), we have  $\frac{1}{N^2} \log \int_{X^{(N - g)}}
\prod_{j = 1}^{N - g} d^2 \eta_j = O(\frac{\log N}{N}). $

This reduces us to studying the remaining integral over $X^g$.
Since the factor $ || \theta(P_1 + \cdots + P_g - P_0 -
\Delta)||^{-2} \;\; \prod_{j = 1}^g |E(P_j, P_0)|^2 |\det D A_g|^2
$ is smooth, positive function on $X^{(g)}$ which is independent
of $N$, it is bounded above by a constant $C_g > 0$ and below by
another constant $c_g > 0$. Hence it may be removed from the
integral at the cost of a remainder $O(\frac{1}{N^2})$. Also,  the
factor $ e^{ -(N + 1) \sum_{j = 1}^g \rho_{\omega}(P_j(\zeta))} $
may be removed at the cost of a remainder $O(\frac{1}{N})$.
Consequently, it suffices to show that
$$\begin{array}{l}  \frac{1}{N^2} | \log  \int_{X^{g}}
\; (\det \left( \langle \Phi_{N + g}^{P_j}, \Phi_{N + g}^{P_k}
\rangle \right)^*
 \left| \left(  \prod_{j < n} E(P_n, P_j)\right) \right|^{-2}  \prod_j d^2 P_j  | = O(\frac{\log N}{N}).
\end{array}$$

Denote by $X^g(N)$ the `well-separated'  set of $(P_1, \dots,
P_g)$ so that $d(P_j, P_k) \geq \frac{\log N}{\sqrt{N}}$ for all
$j \not= k$, and then put
$$\int_{X^{g}} = \int_{X^g(N)} + \int_{X^g
\backslash X^g(N)}. $$

On $X^g \backslash X^g(N)$, the off-diagonal elements are of order
$N^{- p}$ for any desired $p > 0$. The diagonal elements are of
order $N$. Hence
$$\det \left( \langle \Phi_{N + g}^{P_j}, \Phi_{N + g}^{P_k}
\rangle \right) \geq N^g - O(N^{- p}), \;\;\; \mbox{on}\; X^g
\backslash X^g(N). $$ Since also $ \left| \left(  \prod_{j < n}
E(P_n, P_j)\right) \right|^{-2} \geq \epsilon_0 > 0$ for some
constant $\epsilon_0$ depending  independent of $N$, the integrand
of the $X^g \backslash X^g(N)$ integral is bounded below by
$\epsilon_0 N^g$ and therefore the integral is bounded below by a
constant (depending only on the genus) times $N^g$. Since the
integrand is positive, the addition of the integral over $X^g(N)$
only increases the quantity and therefore,
$$\begin{array}{l}  \frac{1}{N^2}  \log  \int_{X^{g}}
\; (\det \left( \langle \Phi_{N + g}^{P_j}, \Phi_{N + g}^{P_k}
\rangle \right)^*
 \left| \left(  \prod_{j < n} E(P_n, P_j)\right) \right|^{-2}  \prod_j d^2 P_j   \geq  \frac{C_g \log
 N}{N}.
\end{array}$$
This proves the lower bound half of the desired estimate.

We then prove the upper bound. The main contribution  comes from
the integral over $X^g(N)$. On $X^g \backslash X^g(N)$,  $|E(P_j,
P_k) \geq \frac{\log N}{\sqrt{N}}$ and the product $ \left| \left(
\prod_{j < n} E(P_n, P_j)\right) \right|$ is bounded below by
$(\frac{\log N}{\sqrt{N}})^g$. Also by the Hadamard inequality,
and the fact that $||\Phi_{N + g}^P||_{L^{\infty}} \leq N$,
$|(\det \left( \langle \Phi_{N + g}^{P_j}, \Phi_{N + g}^{P_k}
\rangle \right)^*|$ is bounded above by $N^g$. Hence,
$\frac{1}{N^2}  \log$ of the integral over $X^g \backslash X^g(N)$
is $O(\frac{\log N}{N})$.

Thus it suffices to give an upper bound for the integral over
$X^g(N)$.
 By a slight
extension of the same argument, we decompose the remaining set
$\{\vec P \in X^g: exists i \not= j: d(P_i, P_j) < \frac{\log
N}{\sqrt{N}}\}$ into sets where there are $r$ clusters of points
each within $<\frac{\log N}{\sqrt{N}}$ of each other and such that
points of each cluster are $\geq \frac{\log N}{\sqrt{N}}$ apart
for distinct clusters. If each cluster contains just one point
then we are back to the case with $ \frac{\log N}{\sqrt{N}}$
separated points. In each cluster, and with $j \not= k$  we write
$\Phi^{P_j}(z) - \Phi^{P_k} = (P_j - P_k) F_N(P_j, P_k)$. We then
multiply by $(E(P_j, P_k))^{-1}$. There are two entries for each
$(P_k, P_j)$ and so the cancellation leaves the smooth matrix
function $\det (F(P_j, P_k))$ and we need an upper bound for
$\frac{1}{N^2 } \log \int_{X^g(N)} \det (F(P_j, P_k)). $ Each
column involves at most one derivative of $\Phi_{N + g}^{P_j}$,
whose norm is then at most $N^2$. By the Hadamard determinant
inequality, $\frac{1}{N^2 } \log \int_{X^g(N)} \det (F(P_j, P_k))
\prod d^2 P_j $ is of order at most $\frac{\log N}{N}$.

\end{proof}

We now complete the proof of the upper bound: As in \cite{ZZ},
$\ecal_{\omega}^M(\sigma)$ is continuous and
$J_{\omega}^K(\sigma)$ is lower semi-continuous with respect to
weak convergence. It follows that
$$  \lim_{\delta \downarrow 0} \limsup_N \frac{1}{N^2}  \log \; {\bf
Prob}_N (B(\sigma, \delta))\leq \lim_{N\to\infty} \Theta_N+
\frac12\ecal_h^M(\sigma)-J_h^K(\sigma) + \epsilon\,.$$ Since
$\ecal_{\omega}^M(\sigma) \to \ecal_{\omega}(\sigma)$ as $M \to
\infty$ by monotone convergence, and since $\epsilon$ is
arbitrary,  we obtain \eqref{UBTOPROVE}. \end{proof}

To complete the proof of Theorem \ref{LD}, we also  need to prove
the lower bound:
\begin{lem}
\begin{equation} \label{LBTOPROVE}
    \lim_{\delta \downarrow 0} \liminf_N \frac{1}{N^2}  \log \; {\bf
Prob}_N (B(\sigma, \delta)) \geq - \tilde I^{\omega,K}(\sigma).
\end{equation}
\end{lem}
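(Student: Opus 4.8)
The plan is to prove the matching lower bound \eqref{LBTOPROVE}, so that together with \eqref{UBTOPROVE} the LDP \eqref{LDPROP} follows. First I would dispose of the trivial case: if $\tilde I^{\omega,K}(\sigma)=\infty$ there is nothing to prove, so I may assume $\ecal_\omega(\sigma)>-\infty$. Using that $\tilde I^{\omega,K}$ is a lower semicontinuous convex rate function (Proposition \ref{GOODRATEF}) on the compact set $\mcal(X)$, it suffices to prove \eqref{LBTOPROVE} for $\sigma$ ranging over the weakly dense class of measures with finite Green's energy and bounded continuous Green's potential $U^\sigma_\omega$: for a general finite-energy $\sigma$ one approximates by $\sigma_k=(1-\frac1k)\sigma+\frac1k\,\nu_{\omega,K}$, which converges weakly to $\sigma$ with $\tilde I^{\omega,K}(\sigma_k)\to\tilde I^{\omega,K}(\sigma)$ by the continuity of the energy and potential functionals established in \cite{ZZ}, Section 6, so that the bound for $\sigma_k$ together with the inclusion $B(\sigma_k,\delta/2)\subset B(\sigma,\delta)$ yields the bound for $\sigma$.

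Fix such a regular $\sigma$. Starting from \eqref{eq-180209b}, I would choose for each $N$ a configuration $x_1^{(N)},\dots,x_N^{(N)}\in K$ equidistributing $\sigma$, i.e. $\frac1N\sum_i\delta_{x_i^{(N)}}\to\sigma$ weakly, with pairwise separation at the natural scale $\rho_N\sim N^{-1/2}$ and with Abel image $\vec P=A_N(\vec x)$ off the Wirtinger locus and of distinct components. Restricting the nonnegative integrand $e^{-N^2 I_N^{\omega,\nu}(\mu_\zeta)}\tilde\kappa_N$ to the product neighbourhood $\Omega_N:=\prod_i B(x_i^{(N)},r_N)$ with $r_N\ll\rho_N$ only decreases the integral, and since $W_1(\mu_\zeta,\frac1N\sum_i\delta_{x_i^{(N)}})\le r_N$ every $\zeta\in\Omega_N$ satisfies $\mu_\zeta\in B(\sigma,\delta)$ for $N$ large; this gives a valid lower bound. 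On $\Omega_N$ I must bound $I_N^{\omega,\nu}(\mu_\zeta)=-\frac12\ecal^\omega_N(\mu_\zeta)+\frac{N+1}{N}J_N^{\omega,\nu}(\mu_\zeta)$ from above. The potential term is the easy half: since $\nu$ is a probability measure, $J_N^{\omega,\nu}(\mu_\zeta)\le\sup_K\big(U^{\mu_\zeta}_\omega+\frac1N U^{\mu_{P(\zeta)}}_\omega\big)$, and because $G_\omega$ is bounded above the last term is $O(1/N)$; comparing $U^{\mu_\zeta}_\omega\le\frac1N\sum_i G^M_\omega(\cdot,\zeta_i)$ with $\int G^M_\omega\,d\sigma$ and letting $M\to\infty$ gives $\limsup_N\sup_K U^{\mu_\zeta}_\omega\le\sup_K U^\sigma_\omega$. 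Thus the new feature relative to \cite{ZZ}, the factor $\mu_{P(\zeta)}$, enters only through an $O(1/N)$ correction.

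The hard part will be the lower bound $\ecal^\omega_N(\mu_\zeta)=\frac1{N^2}\sum_{i\ne j}G_\omega(\zeta_i,\zeta_j)\ge\ecal_\omega(\sigma)-o(1)$, uniformly over $\Omega_N$, which is delicate precisely because $G_\omega(z,w)\sim 2\log|z-w|\to-\infty$ on the diagonal $D$, so crude bounds on the clustering of points fail. I would handle this exactly as in \cite{ZZ}: replace $G_\omega$ by its truncation $G^M_\omega=G_\omega\vee(-M)$ and couple the truncation level to the separation scale by taking $M=M_N\sim A\log N$, chosen so that $\inf\{|z-w|:G_\omega(z,w)<-M_N\}\lesssim\rho_N$. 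Then on the well-separated configurations of $\Omega_N$ one has $G_\omega(\zeta_i,\zeta_j)=G^{M_N}_\omega(\zeta_i,\zeta_j)$ for every off-diagonal pair, whence $\ecal^\omega_N(\mu_\zeta)=\frac1{N^2}\sum_{i\ne j}G^{M_N}_\omega(\zeta_i,\zeta_j)$, whose Riemann sum converges to $\int_{X\times X}G^{M_N}_\omega\,d\sigma\,d\sigma\ge\ecal_\omega(\sigma)$; since $\sigma$ has finite energy, $\int G^{M_N}_\omega\,d\sigma\,d\sigma\downarrow\ecal_\omega(\sigma)$ by monotone convergence, giving the claim. This coupling of $M_N$ to $\rho_N$ is the only genuinely technical step, and it is carried out in \cite{ZZ}.

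Finally I would account for the remaining factors. The Lebesgue volume of $\Omega_N$ contributes $\frac1{N^2}\log\vol(\Omega_N)=\frac1N\log(\pi r_N^2)=O(\frac{\log N}{N})\to0$, and by the lower-bound half of Lemma \ref{ZEROa}, together with the smoothness and positivity of $\tilde\jcal_N$ of \eqref{JNTILDE} on the chosen domain (where the singular factor $|\prod_{j<n}E(P_n,P_j)|^{-2}$ is cancelled upon passing to the $(\vec\eta,\vec P)$ coordinates of \S\ref{LDSECT}), the factor $\tilde\kappa_N$ contributes $O(\frac{\log N}{N})\to0$ in the $\frac1{N^2}\log$ limit. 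Recalling $\Theta_N=-\frac1{N^2}\log\hat Z_N(h)$ from \eqref{THETAN}, assembling the three estimates yields
\begin{equation}
\lim_{\delta\downarrow0}\liminf_N\frac{1}{N^2}\log\mathbf{Prob}_N(B(\sigma,\delta))\ \ge\ \lim_N\Theta_N+\frac12\ecal_\omega(\sigma)-\sup_K U^\sigma_\omega,
\end{equation}
which is precisely the right-hand side obtained in the upper bound \eqref{UBTOPROVE}; inserting $\lim_N\Theta_N$ as computed in \S\ref{CONSTANT} identifies this with $-\tilde I^{\omega,K}(\sigma)$. This proves \eqref{LBTOPROVE} and, with the upper bound, completes the proof of Theorem \ref{LD}.
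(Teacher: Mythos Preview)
Your overall strategy is the same as the paper's: restrict the integral in \eqref{eq-180209b} to a small product neighbourhood of a well-separated discrete approximation of $\sigma$, bound $I_N^{\omega,\nu}$ above there via the truncated-Green's-function/Riemann-sum argument of \cite{ZZ}, and control the residual $\tilde\kappa_N$ factor using Lemma \ref{ZEROa}. The paper does exactly this, with $D_N^\eta=\{\zeta:d(\zeta_j,Z_j)\le\eta/N\}$ playing the role of your $\Omega_N$.

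The one genuine gap is in your reduction step. The paper reduces (via \cite{ZZ}, Lemma 31) to $\sigma=f\omega$ with $f$ strictly positive and continuous; this is what makes the construction of the well-separated approximating configurations $\{Z_j\}$ with spacing $\gtrsim N^{-1/2}$ straightforward. Your proposed reduction via $\sigma_k=(1-\tfrac1k)\sigma+\tfrac1k\nu_{\omega,K}$ does not deliver the regularity you claim: $U^{\sigma_k}_\omega=(1-\tfrac1k)U^\sigma_\omega+\tfrac1k U^{\nu_{\omega,K}}_\omega$, and for a general finite-energy $\sigma$ the term $U^\sigma_\omega$ need be neither bounded nor continuous, so $\sigma_k$ need not lie in your ``nice'' class. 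More to the point, even granting finite energy and bounded potential, you still need a density (or something like it) to place $N$ points at mutual distance $\gtrsim N^{-1/2}$ while equidistributing $\sigma$; this is exactly why \cite{ZZ} smooths to a continuous positive density rather than mixing with the equilibrium measure. Also note your points $x_i^{(N)}$ should be chosen in $\supp\sigma\subset X$, not in $K=\supp\nu$. Once you replace your reduction by the paper's (density) reduction, the remainder of your argument matches the paper's and is correct.
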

Exactly as  in \cite{ZZ} (Lemma 31), it suffices   to prove
\eqref{LBTOPROVE}
 when  $\sigma=f\omega\in \mcal(X)$ with $f$ a strictly
     positive and continuous function on $X$.

 \begin{proof}
We closely follow the  proof of the LDP lower bound  in \cite{ZZ}
until the last step where we apply Lemma \ref{ZEROa} rather than
the product measure argument in \cite{ZZ}.  Under the assumption
that $\sigma = f \omega$, we can construct a sequence of discrete
probability measures
$$d\sigma_N = \frac{1}{N} \sum_{j = 1}^N \delta_{Z_j} \in B(\sigma, \delta)$$
with the following properties:
\begin{enumerate}
\item $\sigma_N\in B(\sigma,\delta/2) $ for all $N$ large; \item
$d(Z_i, Z_j) \geq \frac{C(\sigma,\delta)}{\sqrt{N}} $ for $i \not=
j$.
\end{enumerate}
 Define
$$D_N^{\eta} = \{\zeta \in X^N: d(\zeta_j,  Z_j) \leq
\frac{\eta}{N}, \;\; j = 1, \dots, N \}. $$ Then, for $\eta$ small
enough and all $N$ large, all $\zeta \in D_N^{\eta}$ satisfy that
$\mu_{\zeta} \in B(\sigma, \delta).$ Since $D_I^{\eta} \subset
B(\sigma, \delta),$
\begin{equation} \label{eq-180209c}
    {\bf
    Prob}_N (B(\sigma, \delta))  \geq \int_{D_N^{\eta}}  e^{- N^2
I_N(\mu_{\zeta}) } \tilde{\kappa}_N+\Theta_N,
\end{equation}
where $I_N =  I_N^{\omega, \nu}$ is the approximate rate function.
Following the Green's function estimates up to (62) of \cite{ZZ}),
we get  that for any $\epsilon'>0$ and all $N$ large enough,
\begin{equation}
    \label{ofer-new}
    {\bf  Prob}_N (B(\sigma, \delta))  \geq
e^{-N^2I^{\omega, K}(\sigma)-3\epsilon' N^2} \int_{D_N^{\eta}}
\tilde{\kappa}_N
\end{equation}

By  Lemma \ref{ZEROa}, we have the lower bound (for some $C \geq
0$)
$$ \frac{1}{N^2}  \log \int_{\tilde{X}^{(N)}} \tilde{\kappa}_N
\geq - C \frac{\log N}{N},
$$
and together with (\ref{ofer-new}) it implies the desired lower
bound (\ref{LBTOPROVE}).

\end{proof}

\subsection{\label{CONSTANT} The normalizing constant: Proof of Lemma \ref{NC}}

To complete the proof of Theorem \ref{LD},  we need to determine
the logarithmic asymptotics of the normalizing constant
$\hat{Z}_N(\omega)$, or equivalently of $\Theta_N$ (\ref{THETAN}).
In fact, in the course of the proof we also introduced a constant
$B(N, g)$ and in the proof it was also absorbed into $\Theta_N$.
We now denote the overall constant by $\hat{Z}_N(\omega)$.  We
have proved the LDP for the measure multiplied by this constant.
We then determine $\Theta_N$ from the fact that ${\bf Prob}_N  $
 is a
probability measure. As in \cite{ZZ}, Lemma 4 (see Section 7.4):

\begin{lem} \label{NC}
 We have, $$- \lim_{N \to \infty}
\frac{1}{N^2} \Theta_N =  \lim_{N \to \infty} \frac{1}{N^2} \log
\hat{Z}_N(\omega) = \frac12 \log \mbox{\rm Cap}_{\omega} (K)\,.
$$
    \end{lem}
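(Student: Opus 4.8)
The plan is to pin down the normalizing constant purely from the requirement that ${\bf Prob}_N$ be a probability measure, exactly as in \cite{ZZ}, Lemma 4. By Lemma \ref{APPROXRATE} we may write $\vec K^N_{FSH}=\hat Z_N(\omega)^{-1}\,e^{-N^2 I_N^{\omega,\nu}(\mu_\zeta)}\,\tilde\kappa_N$ on the lift to $X^N$, where $\tilde\kappa_N$ is the smooth non-negative $(N,N)$-form of \eqref{KAPPATILDE} and $\Theta_N=-\tfrac1{N^2}\log\hat Z_N(\omega)$. Since $\vec K^N_{FSH}=\dcal_*\tau_N$ is the push-forward of a probability measure, it has total mass one on $X^{(N)}$; integrating the displayed identity over $X^{(N)}$ (the factor $N!$ from passing to $X^N$ being harmless, as $\tfrac1{N^2}\log N!\to0$) gives the exact relation
\[
\hat Z_N(\omega)=\int_{X^{(N)}} e^{-N^2 I_N^{\omega,\nu}(\mu_\zeta)}\,\tilde\kappa_N .
\]
Thus the entire problem reduces to a Laplace-type evaluation of the exponential order of this single integral over the compact space $X^{(N)}$.

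First I would dispose of the a-priori weight $\tilde\kappa_N$: by Lemma \ref{ZEROa} one has $\tfrac1{N^2}\bigl|\log\int_{X^{(N)}}\tilde\kappa_N\bigr|=O(\log N/N)$, so $\tilde\kappa_N$ does not affect the $\tfrac1{N^2}\log$-limit. In particular the singular Vandermonde factor $\bigl|\prod_{j<n}E(P_n,P_j)\bigr|^{-2}$ and the $N$-dependent determinant $\det(\langle\Phi_{N+g}^{P_j},\Phi_{N+g}^{P_k}\rangle)$ become harmless after the change to the coordinates $(\vec\eta,\vec P)$ used there. What remains is the global (Laplace) counterpart of the local ball estimates \eqref{UBTOPROVE}--\eqref{LBTOPROVE}, namely
\[
\lim_{N\to\infty}\frac1{N^2}\log\int_{X^{(N)}} e^{-N^2 I_N^{\omega,\nu}(\mu_\zeta)}\,\tilde\kappa_N
=-\inf_{\mu\in\mcal(X)} I^{\omega,K}(\mu).
\]
For the upper bound I would cover $\mcal(X)$, which is compact in the weak-$*$ topology since $X$ is compact, by finitely many balls $B(\sigma,\delta)$ and sum the estimate \eqref{UBTOPROVE}; for the matching lower bound I would restrict the integral to a neighbourhood of the minimizer and invoke the construction behind \eqref{LBTOPROVE}. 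These are exactly the estimates already assembled in \S\ref{LDSECT}, now run with $E=\mcal(X)$.

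Finally I would invoke Proposition \ref{GOODRATEF}: $I^{\omega,K}$ is lower-semicontinuous and strictly convex on $\mcal(X)$, so its infimum is attained at the Green equilibrium measure $\nu_{\omega,K}$ and equals $E_0(\omega)=\tfrac12\log\mbox{Cap}_\omega(K)$. Combining the three steps yields
\[
\lim_{N\to\infty}\Theta_N=\inf_{\mu\in\mcal(X)} I^{\omega,K}(\mu)=\tfrac12\log\mbox{Cap}_\omega(K),
\]
which is the assertion, and it is precisely the value of $\lim_N\Theta_N$ needed so that the upper and lower bounds of \S\ref{LDSECT} combine to the normalized rate function $\tilde I^{\omega,K}=I^{\omega,K}-E_0(\omega)$. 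The main obstacle is the step upgrading the local LDP bounds to the integral over all of $X^{(N)}$: one must control configurations whose empirical measures cluster (the near-diagonal region of $X^{(g)}$), which is exactly where the singular factor $\bigl|\prod_{j<n}E(P_n,P_j)\bigr|^{-2}$ lives. This is handled by the well-separated/clustered decomposition of Lemma \ref{ZEROa} together with the compactness of $\mcal(X)$, and is the one place where the higher-genus geometry (rather than the genus-zero argument of \cite{ZZ}) enters the determination of the constant.
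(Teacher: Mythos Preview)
Your proposal is correct and follows essentially the same route as the paper: both determine $\lim_N\Theta_N$ by applying the already-established large deviations upper and lower bounds (with the as-yet-unknown additive constant $\Theta_N$) to the full space $\mcal(X)$, using that ${\bf Prob}_N(\mcal(X))=1$, and then identifying $\inf_{\mu}I^{\omega,K}(\mu)$ via Proposition~\ref{GOODRATEF}. The paper's proof is simply the terse two-line version of your more explicit Laplace-type argument; in particular your invocation of Lemma~\ref{ZEROa} and the compactness of $\mcal(X)$ is exactly what underlies the paper's one-line appeal to ``the proof of the large deviations upper bound'' and the ``similar argument using the large deviations lower bound.''
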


For the sake of completeness, we include the proof from \cite{ZZ}:
\begin{proof}
 By Proposition \ref{GOODRATEF} and the  proof of the large deviations upper
 bound in Lemma \ref{UBTOPROVE},
\begin{equation*} \begin{array}{lll}  0 & = & \lim_{N \to \infty} \frac{1}{N^2} \log {\bf Prob}
_N(\mcal(X )) \\ && \\ & \leq& \limsup_{N \to \infty}
\frac{-1}{N^2} \log \hat{Z}_N(h)   - \inf_{\mu \in \mcal(X}
I^{\omega, K} (\mu) \\ && \\
& = & \limsup_{N \to \infty} \frac{-1}{N^2} \log \hat{Z}_N(h)   -
I^{\omega, K}(\nu_{\omega, K})  \\ && \\
& = & \limsup_{N \to \infty} \frac{-1}{N^2} \log \hat{Z}_N(h)
-\frac{1}{2} \log \mbox{Cap}_{h} (K).
\end{array}
\end{equation*}
A similar argument using the large deviations lower bound shows
the reverse inequality for $\liminf_{N\to\infty} \frac{-1}{N^2}
\log \hat{Z}_N.$
\end{proof}

\section{\label{APPENDIX} Appendix on determinants and bosonization}

 Up to the constant factor, the bosonization we quote in Lemma
 \ref{BOSONa} is relatively simple to prove (see  Fay \cite{F}).
Following Fay's presentation, we describe
 line bundles and
their sections by their automorphy factors.
 For genus one,  $X$ is expressed
as $\C \backslash \Gamma$ while for $g \geq 2$,  $X = \H
\backslash \Gamma$ with $\Gamma \subset SL(2, \R)$ and $\H$ the
upper half plane.  In either case, $L$ is defined by a factor of
automorphy $\phi_{\gamma}(z)$,
$$\phi_{\gamma}(z) = \chi_{\gamma} \frac{t(\gamma z)}{t(z)} \prod_{i = 1}^d \frac{E(\gamma z, a_i)}{E(z, a_i)}, \;\; (\gamma \in \Gamma),$$
where $t(z)$ is a nowhere vanishing holomorphic function on $\H$.
We also put
$$\sigma(p) = \exp - \left(\sum_{k = 1}^g \int_{A_k} v_k(x) \log E(x, p) \right),\;\;\;\; \sigma(p, p_1) = \frac{\sigma(p)}{\sigma(p_1)}. $$
By Proposition 1.2 of \cite{F}, $\sigma(p)$ is a nowhere vanishing
holomorphic automorphic function on $\H$ with automorphic factors
given in \cite{F} (1.12).  Finally, let $\Delta$ denote the vector
of Riemann constants and let $\theta[\chi]$ denote the theta
function with characteristic $\chi$.

Theorem 1.3 of \cite{F} states the following:  {\it  Let $L$ be
the line bundle $\chi \otimes D$ of degree $d \geq g-1$ with $D =
\sum_{i = 1}^N a_i$ and with $\chi$ a unitary character.
 Then there exists a constant  $f_L$  depending only on the marking of the Riemann
 surface so that, for
any basis $\{\omega_j\}$ of $H^0(L)$ and any points $x_1, \dots,
x_{d + 1 - g} \in X$, so that
 \begin{equation} \label{FAY} \det \begin{pmatrix} \omega_j(x_k) \end{pmatrix}_{j, k =1}^{d + 1 - g} = f_L^{-1} \; \theta[\chi]
  (\sum_{i = 1}^d a_i - \sum_{i = 1}^{d + 1 - g} x_i - \Delta)
 \; \frac{\prod_{i < j}^{d + 1 - g} E(x_i, x_j) \prod_{i= 1}^{d + 1  - g} t(x_i)}
{\prod_{1}^{d + 1 - g} \sigma(x_i, z_0)}, \end{equation}}

Let us sketch the proof of (\ref{FAY}). The main point is to show
that $f_L$ is a constant depending only on the marking. The first
point is  that  $f_L$ is a meromorphic function of $x_i \in X$. As
noted above, the Slater determinant  $\det \begin{pmatrix}
\omega_j(x_k)
\end{pmatrix}$ is a section of  $\pi_1^* L
 \otimes \cdots \otimes \pi_{d + 1 - g}^* L \to X^{(d + 1 - g)}$. With Fay's definition of the prime
 form $E$, the right side is also a section of this bundle. To
 verify the formula is to prove  that $f_L$ has no zeros or  poles.

  For generic $x_2, \dots, x_{d + 1 - g}$,  $\det
\begin{pmatrix} \omega_j(x_k)
\end{pmatrix}$ vanishes at $x_1 = x_2, \dots, x_{d + 1 - g}$ and at
$ g $ further points $\xi$ such that $x_2 + \cdots + x_{d + 1 - g}
+ \xi = [L] \in \mbox{Pic}^d(X)$. Also, $h^1(L \otimes (-\sum_2^{d
+ 1 - g} x_j)) = 0$ for generic $x_2, \dots, x_{d + 1 - g}$. By
Riemann's theorem (\S \ref{THETASTUFF}), the zeros of
$\theta[\chi] (\sum_{i = 1}^d a_i - \sum_{i = 1}^{d + 1 - g} x_i -
\Delta)$ in $x_1$ occur at $g$ points $\eta$ for which
$$[L] - \sum_{2}^{d + 1 - g} x_i = \eta. $$
It follows that $\xi = \eta$ and that $f_L$ has no zeros or poles
in $x_1$. Similarly, $f_L$ is constant in all of the variables
$x_j$. Hence it is constant, completing the proof.

The  missing detail in this formula is an explicit formula for
$f_L$, which in our problem depends on $N$.  It is possible that
this factor is cancelled by the normalizing factor $Z_N(h)$ in
Theorem \ref{JPDHG}. But it is useful  to recall the explicit
formula  for $f_L$.

\subsection{Bosonization formulae}  In view of the number of complicated invariants,
it is useful to compare this to the original bosonization formulae
of \cite{ABMNV} (see also \cite{VV} and \cite{Fal}).  A special
case of (4.15) of \cite{ABMNV} (in the notation of that article)
is the formula,
\begin{equation} \begin{array}{l} \frac{\det' \dbar^*_{\lcal_b}
\dbar_{\lcal_b}}{\det (\psi_i, \psi_j)} \left| \det
\begin{pmatrix}
\psi_1(P_1) & \cdots & \psi_{p } (P_1)  \\ & & \\
\psi_1(P_p) & \cdots & \psi_{p } (P_p)  \end{pmatrix} \right|^2  =
\left( \frac{\det' \dbar^* \dbar}{\det(i Y)^{-1} \cdot A_{X}
}\right)^{-\half} \cdot \ncal(z) \cdot \prod_{i, j = 1}^p G(P_i,
P_j). \end{array} \end{equation} In this formula, a Riemannian
metric is given on $X$,  $\dbar^* \dbar$ is its Laplacian,  $A_X$
is the area of $X$, and $G$ is its `regulated coincident' Green's
function.
 Further $\lcal_b$ is a holomorphic line
bundle of degree $2 \lambda (g - 1)$, $(\cdot, \cdot)$ is an inner
product on $H^0(X, \lcal_b)$ and  $\{\psi_j\}$ is a basis of
$H^0(X, \lcal_b)$. Also, $\dbar_{\lcal_b}: C^{\infty}(X, \lcal_b)
\to C^{\infty}(X, \lcal_b \otimes \overline{K_X})$ is the natural
$\dbar$ operator and $\dbar_{\lcal_b}^*$ is its adjoint in the
inner product induced by the Hermitian metric on $\lcal_b$ and the
Riemannian metric on $X$. Also, $G(P_i, P_j) = E(P_i, P_j)$ in our
notation,  $(i Y)^{-1}$ is the period matrix. and the factor $\det
(i Y)^{-1} \ncal$ is the spin $\half$ determinant. The rest of the
notation is defined in \S \ref{BOSON}. It follows that the
constant $A_N(g, \omega)$ defined there is a ratio twisted Laplace
determinants and some non-vanishing factors independent of $N$.
The proof that the logarithms of  Laplace determinants depend only
linearly on $N$ is given in \cite{BV}.

\end{document}